\documentclass[a4paper,12pt,thmsa]{amsart}
\usepackage[a4paper,marginratio={1:1},scale={0.72,0.74},footskip=7mm,headsep=10mm]{geometry}

\usepackage{amsfonts}
\usepackage{amssymb,amsmath,latexsym}
\usepackage[dvips]{graphics}
\usepackage{graphicx,subfigure}
\usepackage[dvips]{color}
\usepackage[T1]{fontenc}
\usepackage[active]{srcltx}
\usepackage{amsmath}
\usepackage{amsfonts}
\usepackage{amssymb}
\usepackage{psfrag}
\usepackage{color}
\usepackage{url}
\usepackage{amsthm}
\usepackage{array}
\usepackage{pst-tree}
\usepackage{lscape}
\usepackage{dsfont}
\usepackage[T1]{fontenc}
\usepackage{pstricks,pstricks-add}\usepackage{mathrsfs}  

\setlength{\topmargin}{-10mm} \setlength{\oddsidemargin}{0pt}
\setlength{\evensidemargin}{0pt} \setlength{\textwidth}{16cm}
\setlength{\textheight}{24cm}

\newcommand{\llangle}{\langle\hspace*{-.03in}\langle}
\newcommand{\rrangle}{\rangle\hspace*{-.03in}\rangle}

\newtheorem{theorem}{Theorem}[section]
\newtheorem{proposition}{Proposition}[section]
\newtheorem{remark}{Remark}[section]
\newtheorem{definition}{Definition}[section]
\newtheorem{corollary}{Corollary}[section]

\newtheorem{lemma}{Lemma}[section]

\newcommand{\ed}{\stackrel{\mbox{\tiny $(law)$}}{=}}

\def\e{\mathbb{E}}
\def\p{\mathbb{P}}
\newcommand{\ind}{\mbox{\rm 1\hspace{-0.04in}I}}


\title[Fluctuation theory for spectrally positive additive L\'evy fields]
{Fluctuation theory for spectrally positive\\ additive L\'evy fields}

\author{Lo\"ic Chaumont}

\author{Marine Marolleau}

\address{Lo\"ic Chaumont -- LAREMA -- UMR CNRS 6093, Universit\'e d'Angers, 2 bd Lavoisier, 49045 Angers cedex~01}

\email{loic.chaumont@univ-angers.fr}

\address{Marine Marolleau -- LAREMA -- UMR CNRS 6093, Universit\'e d'Angers, 2 bd Lavoisier, 49045 Angers cedex~01}

\email{marine.marolleau@ens-rennes.fr}

\keywords{Additive L\'evy field, multivariate first hitting time, fluctuation theory, Kemperman's formula.}

\subjclass[2010]{60G51}

\thanks{}

\date{\today}

\begin{document}

\begin{abstract} A spectrally positive additive L\'evy field is a multidimensional field obtained as the sum 
$\mathbf{X}_{\rm t}={\rm X}^{(1)}_{t_1}+{\rm X}^{(2)}_{t_2}+\dots+{\rm X}^{(d)}_{t_d}$, ${\rm t}=(t_1,\dots,t_d)\in\mathbb{R}_+^d$, 
where ${\rm X}^{(j)}={}^t (X^{1,j},\dots,X^{d,j})$, $j=1,\dots,d$, are $d$ independent $\mathbb{R}^d$-valued L\'evy processes issued 
from 0, such that $X^{i,j}$ is non decreasing for $i\neq j$ and $X^{j,j}$ is spectrally positive. It can also be expressed as 
$\mathbf{X}_{\rm t}=\mathbb{X}_{\rm t}{\bf 1}$, where ${\bf 1}={}^t(1,1,\dots,1)$ and $\mathbb{X}_{\rm t}=(X^{i,j}_{t_j})_{1\leq i,j\leq d}$.
The main interest of spaLf's lies in the Lamperti representation of multitype continuous state branching processes. 
In this work, we study the law of the first passage times $\mathbf{T}_{\rm r}$ of such fields at levels $-{\rm r}$, where 
${\rm r}\in\mathbb{R}_+^d$. We prove that the field $\{(\mathbf{T}_{\rm r},\mathbb{X}_{\mathbf{T}_{\rm r}}),{\rm r}\in\mathbb{R}_+^d\}$ 
has stationary and independent increments and we describe its law in terms of this of the spaLf $\mathbf{X}$. In particular,
the Laplace exponent of $(\mathbf{T}_{\rm r},\mathbb{X}_{\mathbf{T}_{\rm r}})$ solves a functional equation leaded by the 
Laplace exponent of $\mathbf{X}$. This equation extends in higher dimension a classical fluctuation identity satisfied by the Laplace 
exponents of the ladder processes. Then we give an expression
of the distribution of $\{(\mathbf{T}_{\rm r},\mathbb{X}_{\mathbf{T}_{\rm r}}),{\rm r}\in\mathbb{R}_+^d\}$ in terms of the distribution of 
$\{\mathbb{X}_{\rm t},{\rm t}\in\mathbb{R}_+^d\}$ by the means of a Kemperman-type formula, well-known for spectrally positive 
L\'evy processes.
 \end{abstract}

\maketitle

\section{Introduction}

A spectrally positive, additive L\'evy field (spaLf) is defined by 
\[\mathbf{X}_{\rm t}:=\left(\sum_{j=1}^dX_{t_j}^{i,j},\,i=1,\dots, d\right)={\rm X}^{(1)}_{t_1}+\dots+{\rm X}^{(d)}_{t_d}\,,\;\;\;
{\rm t}=(t_1,\dots,t_d)\in\mathbb{R}_+^d\,,\]
where ${\rm X}^{(j)}={}^t (X^{1,j},\dots,X^{d,j})$, $j=1,\dots,d$, are $d$ independent $\mathbb{R}^d$-valued L\'evy processes such that 
$X^{i,j}$ are non decreasing for $i\neq j$ and $X^{j,j}$ is spectrally positive. SpaLf's can be considered as (non-trivial) extensions in 
higher dimension of spectrally positive L\'evy processes and the purpose of this article is to develop fluctuation theory for such random 
fields. The particular pathwise features of spaLf's allow us to define their first passage times 
$\mathbf{T}_{\rm r}=(T_{\rm r}^{(1)},\dots,T_{\rm r}^{(d)})$ at multivariate levels 
$-{\rm r}\in\mathbb{R}_-^d$ as the smallest of the indices ${\rm t}=(t_1,\dots,t_d)$ satisfying $\mathbf{X}_{\rm t}=-{\rm r}$ in the usual 
partial order of $\mathbb{R}^d$. The distribution of the variables $(\mathbf{T}_{\rm r},\mathbb{X}_{\mathbf{T}_{\rm r}})$, 
${\rm r}\in\mathbb{R}^{d}_{+}$ can then be related to the distribution of the field $\{\mathbb{X}_{\rm t},{\rm t}\in\mathbb{R}_+^d\}$, 
where $\mathbb{X}_{\rm t}=(X^{i,j}_{t_j})_{1\leq i,j\leq d}$. In doing so we obtain some fluctuation-type identities in the general 
framework of multivariate stochastic fields. These results provide an intrinsic motivation for the present study that can be considered 
in the line of several works on additive L\'evy processes from Khoshnevisan and Xiao, see for instance \cite{kx}.\\

The original motivation comes from an extension of the Lukasiewicz-Harris coding of Bienaym\'e-Galton-Watson trees through 
downward skip free random walks. In \cite{cl}, the authors proved that multitype Bienaym\'e-Galton-Watson trees can be coded by 
multivariate random fields 
\[\left(\sum\limits_{j=1}^d S_{n_j}^{i,j},\,i=1,\dots, d\right),\;\;\;\mbox{where}\;\;\;{\rm S}^{(j)}=(S^{1,j},\dots,S^{d,j}),\;\;\;j=1,\dots,d,\] 
are $d$ independent $\mathbb{Z}^d$-valued random walks such that $S^{i,j}$ are non decreasing for $i\neq j$ and $S^{j,j}$ is 
downward skip free. These random fields are the discrete time counterparts of spaLf's which suggests the possibility of coding 
continuous multitype branching trees in an analogous way. It seems quite complicated to achieve such a result as the notion of 
continuous multitype tree is not clearly defined for general mechanisms. 
However, reducing the analysis to processes rather than trees, one may still consider the Lamperti representation which provides 
a pathwise relationship between branching processes and their mechanism. This representation can be extended to 
continuous time multitype branching processes by using spaLf's. It was done in \cite{ch} for the discrete valued case and in \cite{cpu} 
and \cite{gt} for the continuous one. More specifically, let ${\rm Z}=(Z^{(1)},\dots,Z^{(d)})$ be a continuous time multitype 
branching process issued from ${\rm r}\in\mathbb{R}_+^d$. Then ${\rm Z}$ can be represented as the unique pathwise solution 
of the following equation,
\[(Z^{(1)}_t,\dots,Z^{(d)}_t)={\rm r}+\left(\sum_{j=1}^dX^{1,j}_{\int_0^t Z^{(j)}_s\,ds},
\dots,\sum_{j=1}^dX^{d,j}_{\int_0^t Z^{(j)}_s\,ds}\right),\;\;\;t\ge0\,,\]
where  ${\rm X}^{(j)}$, $j=1,\dots,d$, are L\'evy processes as described above.
Now recall that $0$ is an absorbing state for ${\rm Z}$. Then it follows from the above equation that the path
of ${\rm Z}$ up to its first passage time at 0 is entirely determined by the path of the spaLf 
\[\{\mathbf{X}_{\rm t}, {\rm t}\in\mathbb{R}^{d}_{+}\}=\left\lbrace\left(\sum\limits_{j=1}^dX_{t_j}^{i,j}\right)_{1\leq i,j\leq d},
{\rm t}\in\mathbb{R}^d_+\right\rbrace\] 
up to its first passage time $\mathbf{T}_{\rm r}$ at level $-{\rm r}$.
This fact  which is plain in the case $d=1$ will be proved in the general case in the upcoming paper \cite{cam2}, where extinction 
of continuous time multitype branching processes is characterized through path properties of spaLf's.\\ 

The next section consists in an important preliminary lemma for deterministic paths whose aim is to prove the existence of first 
passage times of spaLf's and to derive their first basic properties. Then in Section \ref{2018} we will turn our attention to the law of 
these first passage times. In particular we will prove that in analogy with the one dimensional case, their Laplace exponent is the 
inverse of the Laplace exponent of the spaLf. The situation for $d\ge2$ differs significantly from the one dimensional case as 
we first need to give necessary and sufficient conditions for the multivariate hitting times $\mathbf{T}_{\rm r}$ to be finite
on each coordinate, with positive probability, for all ${\rm r}\in\mathbb{R}_+^d$. (When $d=1$, this is equivalent to saying that 
the  spectrally positive L\'evy process is not a subordinator.) Another fundamental difference concerns the matrix valued field
$\mathbb{X}_{\mathbf{T}_{\rm r}}$ which is simply equal to ${\rm r}$ on the set $\mathbf{T}_{\rm r}<\infty$, when $d=1$. 
In Section  \ref{6932} we will focus on the law of the field $(\mathbf{T}_{\rm r},\mathbb{X}_{\mathbf{T}_{\rm r}})$ and prove 
that  its Laplace exponent solves a functional equation leaded by the Laplace exponent of the spaLf $\mathbf{X}$. This 
equation, see (\ref{2978}) in Theorem \ref{4126} below, can be compared to the classical Wiener-Hopf factorization involving the 
ladder processes of spectrally positive L\'evy processes. Then in Theorem \ref{3492}  the distribution of 
$(\mathbf{T}_{\rm r},\mathbb{X}_{\mathbf{T}_{\rm r}})$ will be fully characterized in terms of the distribution of the original stochastic
field $\mathbb{X}$, through an extension 
of Kemperman's formula, see Corollary VII.3 in \cite{be}.  More specifically, our result states that the measure 
\[\p(\mathbf{T}_{\rm r}\in {\rm dt},\,X^{i,j}_{t_j}\in {\rm d}x_{ij},\, 1\leq i,j\leq d)\,{\rm dr}\] 
is the image of the measure
\[\frac{\mbox{det}(-(x_{i,j})_{i,j\in[d]})}{t_1t_2\dots t_d}\prod_{j=1}^d\p(X^{i,j}_{t_j}\in {\rm d}x_{ij},\, i=1,\dots,d)
{\rm d}t_1\dots {\rm d}t_d,\]
through the mapping $({\rm t},(x_{i,j})_{i,j\in(d]})\mapsto({\rm t},(x_{i,j})_{i,j\in[d]},-(x_{i,j})_{i,j\in[d]}\cdot{\bf 1})$, 
where ${\bf 1}=(1,1,\dots,1)$. In order to prove it, we will use a similar identity recently obtained in \cite{cl} and \cite{ch} in the 
discrete time and space settings together with a discrete approximation. 

\section{A preliminary lemma in the deterministic setting}

We use the notation $\mathbb{R}_+=[0,\infty)$, $\overline{\mathbb{R}}_+=[0,\infty]$ and $[d]=\{1,\dots,d\}$, 
where $d\ge1$ is an integer. For ${\rm s}=(s_1,\dots,s_d)$ and ${\rm t}=(t_1,\dots,t_d)\in\overline{\mathbb{R}}_+^d$, 
we write ${\rm s}\le{\rm t}$ if $s_i\le t_i$ for all $i\in[d]$ and we write ${\rm s}<{\rm t}$ if ${\rm s}\le {\rm t}$ and there exists $i\in[d]$ 
such that $s_i<t_i$.\\

Recall that a real valued function $x:\mathbb{R}_+\rightarrow\mathbb{R}$ is said to be c\`adl\`ag, 
if it is right continuous on $\mathbb{R}_+$ and has left limits on $(0,\infty)$. Such a function is said to be downward 
skip free if for all $s\ge0$, $x(s)-x(s-)\ge0$, where we set $x(0-)=x(0)$. We also say that $x$ has no negative jumps. 
We will use the notation $x_t$ or $x(t)$ indifferently.

\begin{definition}\label{4141}
We call $\mathcal{E}_d$, the set of matrix valued functions $\textsc{x}=\{(x^{i,j}_{t_j})_{i,j\in[d]},\,{\rm t}\in\mathbb{R}^d_+\}$
such that for all $i,j$, $x^{i,j}$ is a c\`adl\`ag function and
\begin{itemize}
\item[$(i)$] $x^{i,j}_0=0$, for all $i,j\in[d]$,
\item[$(ii)$] for all $i\in[d]$, $x^{i,i}$ is downward skip free,
\item[$(iii)$]   for all $i,j\in[d]$ such that  $i\neq j$, $x^{i,j}$ is non decreasing.
\end{itemize}
\end{definition}

\noindent For ${\rm s}\in \overline{\mathbb{R}}_+^d$, we denote by $[d]_{\rm s}$ the set of indices of finite coordinates of 
${\rm s}$, that is $[d]_{\rm s}=\{i\in[d]:s_i<\infty\}$. For $i\neq j$, we set $x^{i,j}(\infty)=x^{i,j}(\infty-)=
\lim\limits_{s\rightarrow\infty} x^{i,j}(s)$.
\begin{definition}
Let $\textsc{x}\in\mathcal{E}_d$ and ${\rm r}=(r_1,\dots,r_d)\in\mathbb{R}_+^d$. Then ${\rm s}\in \overline{\mathbb{R}}_+^d$
is called a solution of the system $({\rm r},\textsc{x})$ if it satisfies
\begin{equation}\label{4379}
({\rm r},\textsc{x})\qquad r_i+\sum_{j=1}^dx^{i,j}(s_j-)=0\,,\;\;\;i\in [d]_{\rm s}\,.
\end{equation}
$($In particular, ${\rm s}=(\infty,\infty,\dots,\infty)$ is always a solution of the system $({\rm r},\textsc{x})$ since 
$[d]_{\rm s}=\emptyset$.$)$
\end{definition}

\noindent Note that in $(\ref{4379})$ it is implicit that $\sum\limits_{j\in [d]\setminus [d]_{\rm s}}x^{i,j}(s_j-)<\infty$, for all 
$i\in [d]_{\rm s}$, although by definition $s_j=\infty$, for $j\in [d]\setminus [d]_{\rm s}$. The next lemma is a continuous 
time and space counterpart of Lemma 1 in \cite{ch}. The proof of the present result follows a similar scheme, however we 
need to perform it here as it requires more care. 

\begin{lemma}\label{1377} Let $\textsc{x}\in\mathcal{E}_d$ and ${\rm r}=(r_1,\dots,r_d)\in\mathbb{R}_+^d$. 
\begin{itemize}
\item[$1.$] There exists a solution ${\rm s}=(s_1,\dots,s_d)\in\overline{\mathbb{R}}_+^d$ of the system $({\rm r},\textsc{x})$
such that any other solution ${\rm t}$ of $({\rm r},\textsc{x})$ satisfies ${\rm t}\ge {\rm s}$.  
The solution ${\rm s}$ will be called {\em the smallest solution} of the system $({\rm r},\textsc{x})$. 
\item[$2.$] Let ${\rm s}$ and ${\rm s}'$ be the smallest solutions of the systems $({\rm r},\textsc{x})$ and 
$({\rm r}',\textsc{x})$, respectively. If ${\rm r}'\le{\rm r}$, then ${\rm s}'\le{\rm s}$. Moreover if $({\rm r}_n)_{n\geq 0}$ is non
decreasing with $\lim\limits_{n\rightarrow\infty}{\rm r}_n={\rm r}$ then the sequence $({\rm s}_n)_{n\geq 0}$ of smallest 
solutions of $({\rm r}_n,\textsc{x})$ satisfies $\lim\limits_{n\rightarrow\infty}{\rm s}_n={\rm s}$.
\item[$3.$] Let ${\rm s}$ be the smallest solution of $({\rm r},\textsc{x})$. If ${\rm u}$ is such that  for all
$i\in[d]_{\rm u}$, $\sum\limits_{j=1}^dx^{i,j}(u_j-)$ $\le-r_i$, then ${\rm u}\ge{\rm s}$. As a consequence, for all 
${\rm u}\in\mathbb{R}^d_+$ such that ${\rm u}<{\rm s}$, there is $i\in[d]$ such that $\sum\limits_{j=1}^dx^{i,j}(u_j-)>-r_i$. 
\item[$4.$] The smallest solution ${\rm s}$ of $({\rm r},\textsc{x})$ satisfies $s_i=\inf\left\lbrace t:x^{i,i}_{t-}=
\inf\limits_{0\le u\le s_i}x^{i,i}_u\right\rbrace$, 
for all $i\in [d]_{\rm s}$.
\end{itemize}
\end{lemma}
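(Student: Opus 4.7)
The plan is to construct the smallest solution as the monotone limit of an explicit Picard-type iteration on the product order of $\overline{\mathbb{R}}_+^d$, and to derive Parts~2--4 from the monotonicity built into this construction. Define $\Phi:\overline{\mathbb{R}}_+^d\to\overline{\mathbb{R}}_+^d$ componentwise by
\[\Phi_i({\rm t})=\inf\Bigl\{u\ge 0:\;x^{i,i}(u-)\le -r_i-\sum_{j\ne i}x^{i,j}(t_j-)\Bigr\},\qquad i\in[d],\]
with $\inf\emptyset=\infty$. Since each $x^{i,j}$ with $j\ne i$ is non-decreasing, the threshold inside $\Phi_i$ is non-increasing in ${\rm t}$, so $\Phi$ is monotone in the product order; moreover, since $x^{i,i}$ is downward skip free, the infimum, when finite, is attained with $x^{i,i}(\Phi_i({\rm t})-)$ exactly equal to the threshold. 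Two consequences will be used repeatedly: any solution ${\rm s}$ of $({\rm r},\textsc{x})$ satisfies $\Phi({\rm s})\le{\rm s}$ (just rewrite $(\ref{4379})$ as $x^{i,i}(s_i-)=-r_i-\sum_{j\ne i}x^{i,j}(s_j-)$), and any ${\rm u}$ satisfying only the inequalities $\sum_j x^{i,j}(u_j-)\le -r_i$ for $i\in[d]_{\rm u}$ also satisfies $\Phi({\rm u})\le{\rm u}$ (the diagonal term gives $x^{i,i}(u_i-)\le$ threshold, and downward skip-freeness yields $u_i\ge\Phi_i({\rm u})$).

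For Part~1, I iterate $\Phi$ starting from ${\rm s}^{(0)}=(0,\dots,0)$. Since ${\rm s}^{(1)}\ge{\rm s}^{(0)}$ and $\Phi$ is monotone, the sequence ${\rm s}^{(n)}:=\Phi^n({\rm s}^{(0)})$ is non-decreasing and converges componentwise to some ${\rm s}\in\overline{\mathbb{R}}_+^d$. The main technical step---and the place I expect to require the most care---is verifying that this limit is a genuine solution of $({\rm r},\textsc{x})$. For $i\in[d]_{\rm s}$ one passes to the limit in $s^{(n+1)}_i=\Phi_i({\rm s}^{(n)})$ using (a)~left continuity of the map $t\mapsto x^{i,j}(t-)$ at finite $s_j$ for $j\in[d]_{\rm s}$, together with monotone convergence to $x^{i,j}(\infty-)$ for $j\notin[d]_{\rm s}$ (implicitly a finite quantity, as built into the convention of~$(\ref{4379})$), and (b)~downward skip-freeness of $x^{i,i}$, which forces $x^{i,i}(s^{(n+1)}_i-)$ to equal the current threshold and, at the limit, yields $x^{i,i}(s_i-)=-r_i-\sum_{j\ne i}x^{i,j}(s_j-)$. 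Minimality is the dual induction: any solution ${\rm t}$ satisfies $\Phi({\rm t})\le{\rm t}$, so ${\rm s}^{(0)}=0\le{\rm t}$ and monotonicity of $\Phi$ give ${\rm s}^{(n)}\le{\rm t}$ for every $n$, hence ${\rm s}\le{\rm t}$.

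The first half of Part~2 is immediate: passing from ${\rm r}$ to ${\rm r}'\le{\rm r}$ raises the threshold inside each $\Phi_i$, so both the iterates and their limit decrease, yielding ${\rm s}'\le{\rm s}$. For the monotone convergence statement, ${\rm s}_n$ is non-decreasing and dominated by ${\rm s}$; letting $\tilde{\rm s}=\lim_n{\rm s}_n$, the same left-continuity and monotone convergence arguments as in Part~1 allow passage to the limit in each equation of $({\rm r}_n,\textsc{x})$ for $i\in[d]_{\tilde{\rm s}}$, showing $\tilde{\rm s}$ solves $({\rm r},\textsc{x})$; the minimality established in Part~1 then forces $\tilde{\rm s}={\rm s}$. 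Part~3 is a direct application of the inequality $\Phi({\rm u})\le{\rm u}$ noted in the first paragraph: starting the iteration from $0\le{\rm u}$ and using monotonicity of $\Phi$ inductively yields ${\rm s}^{(n)}\le{\rm u}$, hence ${\rm s}\le{\rm u}$; the stated consequence is the contrapositive in the case ${\rm u}\in\mathbb{R}_+^d$, where $[d]_{\rm u}=[d]$.

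Finally, Part~4 combines minimality of ${\rm s}$ with downward skip-freeness. Set $c_i=-r_i-\sum_{j\ne i}x^{i,j}(s_j-)$, so that the $i$-th equation gives $x^{i,i}(s_i-)=c_i$ and hence $\inf_{0\le u\le s_i}x^{i,i}(u)\le c_i$. If the infimum were strictly less than $c_i$, then downward skip-freeness of $x^{i,i}$ would produce a first hitting time $u^*<s_i$ with $x^{i,i}(u^*-)=c_i$; replacing $s_i$ by $u^*$ in ${\rm s}$ gives a vector that satisfies the $i$-th equation exactly and, since $x^{k,i}$ is non-decreasing for $k\ne i$, satisfies the inequality $\sum_j x^{k,j}(\cdot-)\le -r_k$ on every other row, so Part~3 would force this vector to be $\ge{\rm s}$, contradicting $u^*<s_i$. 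Hence $\inf_{0\le u\le s_i}x^{i,i}(u)=c_i$, and the same contradiction rules out any $t<s_i$ with $x^{i,i}(t-)=c_i$, giving the identity $s_i=\inf\{t:x^{i,i}(t-)=\inf_{0\le u\le s_i}x^{i,i}(u)\}$ as stated.
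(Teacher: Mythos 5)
Your proof is correct and follows essentially the same approach as the paper: both construct the smallest solution as the increasing limit of the iteration $s^{(n+1)}_i=\inf\{u:x^{i,i}(u-)\le -r_i-\sum_{j\ne i}x^{i,j}(s^{(n)}_j-)\}$ started from ${\rm s}^{(0)}=0$, pass to the limit using left-continuity and downward skip-freeness, and deduce minimality by induction. Packaging the iteration as a monotone operator $\Phi$ is a cleaner way to write the same argument; it lets you prove Parts~3 and~4 directly from $\Phi({\rm u})\le{\rm u}$ and Part~3 respectively, whereas the paper routes those proofs through an auxiliary vector ${\rm r}'$ and Parts~1--2, but the substance is identical.
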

\begin{proof} This proof is based on the observation that for each $i\in[d]$, as a function of ${\rm t}$, the 
term $\sum\limits_{j=1}^{d}x^{i,j}(t_j)$ has no negative jumps. Moreover, when $t_i$ is fixed, it is non decreasing.\\

Let us set $v_i^{(1)}=r_i$ and for $n\ge1$,
\[s_i^{(n)}=\inf\{t:x^{i,i}_{t-}=-v_i^{(n)}\}\;\;\mbox{and}\;\;v_{i}^{(n+1)}=r_i+\sum_{j\neq i}x^{i,j}(s^{(n)}_j-)\,,\]
where $\inf\emptyset=\infty$.  Set also ${\rm s}^{(0)}=0$ and note that $[d]_{{\rm s}^{(0)}}=[d]$. Then since for
$i\neq j$, the $x^{i,j}$'s are positive and non decreasing, we have 
\[{\rm s}^{(n)}\le {\rm s}^{(n+1)}\;\;\mbox{and}\;\;[d]_{{\rm s}^{(n+1)}}\subseteq [d]_{{\rm s}^{(n)}}\,,\;\;\;n\ge0\,.\]
Let us set ${\rm s}^{(\infty)}=\lim\limits_{n\rightarrow\infty}{\rm s}^{(n)}$.
Then ${\rm s}^{(\infty)}$ is the smallest solution of the system $({\rm r},\textsc{x})$ in the sense which is defined in Lemma 
\ref{1377}. Indeed, let $i\in[d]_{{\rm s}^{(\infty)}}$. By definition and since $x^{i,i}$ has no negative jumps, for all $n\ge1$, 
$x^{i,i}(s_i^{(n)}-)=-v_i^{(n)}$. Moreover, 
since the processes $t\mapsto x^{i,j}(t-)$ are left continuous, $\lim\limits_{n\rightarrow\infty}x^{i,i}(s_i^{(n)}-)=x^{i,i}(s_i^{(\infty)}-)$
and $\lim\limits_{n\rightarrow\infty}v_i^{(n)}=r_i+\sum\limits_{j\neq i}x^{i,j}(s_j^{(\infty)}-)$. Hence (\ref{4379}) is satisfied for 
${\rm s}^{(\infty)}$, that is $r_i+\sum\limits_{j=1}^dx^{i,j}(s_j^{(\infty)}-)=0$, for all $i\in[d]_{{\rm s}^{(\infty)}}$. Now let ${\rm t}\in\overline{\mathbb{R}}_+^d$ 
satisfying (\ref{4379}), that is
\begin{equation}\label{5389}
r_i+\sum_{j\neq i}x^{i,j}(t_j-)+x^{i,i}(t_i-)=0\,,\;i\in [d]_{{\rm t}}\,.
\end{equation}
We can prove by induction that ${\rm t}\ge {\rm s}^{(n)}$, for all $n\ge1$. Firstly for (\ref{5389}) to be satisfied, we should have 
$t_i\ge \inf\{s:x^{i,i}(s-)=-r_i\}$, for all $i\in [d]_{{\rm t}}$,
hence ${\rm t}\ge {\rm s}^{(1)}$. Now assume that ${\rm t}\ge {\rm s}^{(n)}$. Then $[d]_{{\rm t}}\subseteq [d]_{{\rm s}^{(n)}}$ and 
from (\ref{5389}), for each $i\in [d]_{{\rm t}}$, 
\[x^{i,i}(t_i-)=-\left(r_i+\sum_{j\neq i}x^{i,j}(t_j-)\right)\le-\left(r_i+\sum_{j\neq i}x^{i,j}(s_j^{(n)}-)\right)\,.\]
Therefore $t_i\ge \inf\left\lbrace s:x^{i,i}(s-)=-\left(r_i+\sum\limits_{j\neq i}x^{i,j}(s_j^{(n)}-)\right)\right\rbrace$, so that 
${\rm t}\ge {\rm s}^{(n+1)}$ and the first assertion is proved.\\

If ${\rm r}'\le {\rm r}$, then one can easily prove by induction that, with obvious notation, ${\rm s}'^{(n)}\le{\rm s}^{(n)}$ for all 
$n\ge1$ and the first part of assertion $2.$ follows. For the second part, set ${\rm s}':=\lim\limits_{n\rightarrow\infty}{\rm s}_n$. Then 
first part of assertion $2.$ yields ${\rm s}'\le{\rm s}$. Moreover, from the left continuity of the functions 
$t\mapsto x^{i,j}_{t-}$, $r_i+\sum\limits_{j=1}^dx^{i,j}(s_j'-)=0$, $i\in [d]_{{\rm s}'}$ hence ${\rm s}'$ is a solution of $({\rm r},\textsc{x})$ 
and thus ${\rm s}'={\rm s}$.\\

Let ${\rm u}\in\mathbb{R}^d_+$, such that $\sum\limits_{j=1}^dx^{i,j}(u_j-)\le-r_i$, for all $i\in[d]_{\rm u}$ and set $r'_i=
-\sum\limits_{j=1}^dx^{i,j}(u_j-)$. Since ${\rm r}'\ge{\rm r}$, it follows from 2.~that the smallest solution ${\rm s}'$ of the system 
$({\rm r}',\textsc{x})$ is such that ${\rm s}'\ge{\rm s}$. But since ${\rm u}$ is also a solution of $({\rm r}',\textsc{x})$, 1.~implies 
${\rm u}\ge{\rm s}'$ and the first assertion of 3.~follows. The second assertion of $3.$ is a consequence of the first one. Indeed, 
${\rm u}<{\rm s}$ implies that ${\rm u}\ge{\rm s}$ is not satisfied.\\

Assertion 4.~follows from the above construction of ${\rm s}={\rm s}^{(\infty)}$. Indeed, if there exists $i\in[d]_{\rm s}$ and
$t_{i}< s_{i}$ such that $x^{i,i}(t_{i}-)\le x^{i,i}(s_{i}-)$ then
\begin{equation}\label{9175}
\sum\limits_{j\neq i} x^{i,j}(s_{j}-)+x^{i,i}(t_{i}-) \le \sum\limits_{j\in[d]} x^{i,j}(s_{j}-)=-r_{i}
\end{equation}
and for all $k\in [d]_{\rm s}\setminus\{i\}$,
\begin{equation}\label{9275}
\sum\limits_{j\neq i} x^{k,j}(s_{j}-)+x^{k,i}(t_{i}-) \leq \sum\limits_{j\in[d]} x^{k,j}(s_{j}-)=-r_{k} \,.
\end{equation} 
Then set for all $k\in[d]_{s}$, $r'_k=-\left(\sum\limits_{j\neq i} x^{k,j}(s_{j}-)+x^{k,i}(t_i-)\right)$ and for all $k\in[d]\setminus[d]_{s}$, 
$r'_{k}=r_{k}$. Let ${\rm s}'$ be the smallest solution of 
the system $({\rm r}',\textsc{x})$. From part 2.~of the present lemma, since ${\rm r}'\ge{\rm r}$, ${\rm s}'\ge{\rm s}$. On the other 
hand, from (\ref{9175}),  (\ref{9275}) and part 3.~of the present lemma, 
${\rm s}>(s_1,\dots,s_{i-1},t_i,s_{i+1},\dots,s_d)\ge {\rm s}'$ which is a contradiction.
\end{proof}

\noindent We emphasize that according to our definition, some of the coordinates of the smallest 
solution of the system $({\rm r},\textsc{x})$ may be infinite. 

\section{Fluctuation theory for additive L\'evy fields}\label{2018}

Vectors of $\mathbb{R}^d$ will be denoted by ${\rm x}=(x_1,\dots,x_d)$ and ${\rm e}_i=(0,\dots,0,1,0,\dots,0)$ will be the $i$-th 
unit vector of $\mathbb{R}_+^d$. We will denote by $\langle {\rm x},{\rm y}\rangle$, ${\rm x},{\rm y}\in\mathbb{R}^d$ the usual 
scalar product on $\mathbb{R}^d$ and by $|{\rm x}|$ the euclidian norm of ${\rm x}$. 
A matrix $M=(m_{i,j})_{i,j\in[d]}\in M_d(\mathbb{R}\cup\{\infty\})$ is said to be irreducible if for all 
$i,j\in[d]$, there is a sequence $i=i_1,i_2,\dots,i_n=j$, for some $n\ge1$, such that $m_{i_k,i_{k+1}}\neq0$, for all $k=1,\dots,n-1$.
For two matrices $A$ and $B$ of $M_d(\mathbb{R})$, with columns ${\rm a}^{(1)},\dots,{\rm a}^{(d)}$ and 
${\rm b}^{(1)},\dots,{\rm b}^{(d)}$, respectively, we define the following special product,
\[\llangle A,B\rrangle=\sum\limits_{j\in[d]} \langle {\rm a}^{(j)},{\rm b}^{(j)}\rangle.\]
A matrix $A=(a_{i,j})_{i,j\in[d]}$ is called essentially nonnegative (or a Metzler matrix) if $a_{i,j}$ is 
nonnegative whenever $i\neq j$. For instance, for any element $\textsc{x}=\{(x^{i,j}_{t_j})_{i,j\in[d]},\,{\rm t}\in\mathbb{R}_+^d\}$ 
of the set $\mathcal{E}_d$ introduced at the previous section, the matrix $\textsc{x}_{\rm t}=(x^{i,j}_{t_j})_{i,j\in[d]}$ is essentially 
nonnegative for all $t_j\ge0$.\\

In this work, we shall consider $d$ independent L\'evy processes ${\rm X}^{(1)},\dots,{\rm X}^{(d)}$ on $\mathbb{R}_+^d$, such that 
with the notation ${\rm X}^{(j)}={}^t (X^{1,j},\dots,X^{d,j})$, for all $j\in[d]$, the process $X^{j,j}$ is a real spectrally positive L\'evy 
process, that is it has no negative 
jumps, and for all $i\neq j$, the L\'evy process $X^{i,j}$ is a subordinator. We emphasize that the processes $X^{1,j},\dots,X^{d,j}$ are 
not necessarily independent. Moreover, we do not exclude the possibility for a process $X^{i,j}$ to be identically equal to 0. 
It is known, see Chap.~VII, in \cite{be}, that the L\'evy process ${\rm X}^{(j)}$ admits all negative 
exponential moments. We denote by  $\varphi_j$ its Laplace exponent, that is
\[\e[e^{-\langle \lambda, {\rm X}^{(j)}_t\rangle}]=e^{t\varphi_j(\lambda)}\,,\;\;\;t\ge0\,,\;\;\;{\bf\lambda}=
(\lambda_1,\dots,\lambda_d)\in \mathbb{R}_+^d\,.\] 
Then from L\'evy Khintchine formula and the above assumptions on ${\rm X}^{(j)}$, $\varphi_j$ has the following form,
\begin{equation}\label{2552}
\varphi_j(\lambda)=-\sum_{i=1}^d a_{i,j}\lambda_i+\frac12q_j\lambda_j^2-
\int_{\mathbb{R}_+^d}(1-e^{-\langle\lambda, {\rm x}\rangle}-\langle\lambda, {\rm x}\rangle1_{\{|{\rm x}|<1\}})\,\pi_j(d{\rm x})\,, \;\; 
\lambda\in\mathbb{R}^{d}_{+},
\end{equation}
where $(a_{i,j})_{i,j\in[d]}$ is an essentially nonnegative matrix, $q_j\ge0$ and $\pi_j$ is a measure on $\mathbb{R}_+^d$ 
such that $\pi_j(\{0\})=0$ and
\[\int_{\mathbb{R}_+^d}\left[(1\wedge |{\rm x}|^2)+\sum\limits_{i\neq j}(1\wedge x_i)\right]\pi_j(d{\rm x})<\infty\,.\]
Note that for all $j\in[d]$, $\varphi_j$ is log-convex, i.e. the function $\log \varphi_j$ is convex on $(0,\infty)^d$. In particular, 
$\varphi_j$ is a convex function. Moreover, for all $i\neq j$ and $\lambda_1,\dots,\lambda_{i-1},\lambda_{i+1},\dots,\lambda_d$ 
the function $\lambda_i\mapsto \varphi_j(\lambda)$ is non increasing.\\

Let us now define the multivariate stochastic field 
\[\mathbf{X}_{\rm t}:={\rm X}_{t_1}^{(1)}+\dots+{\rm X}^{(d)}_{t_d}=\left(\sum_{j=1}^dX^{i,j}_{t_j}\right)_{i\in[d]}
\,,\;\;\;\mbox{for}\;\;\;{\rm t}=(t_1,\dots,t_d)\in\mathbb{R}_+^d\,.\]
Then $\mathbf{X}:=\{\mathbf{X}_{\rm t},\,{\rm t}\in\mathbb{R}_+^d\}$ 
is a particular case of additive L\'evy field in the sense of \cite{kx}. Its law is characterized by the Laplace exponent 
$\varphi:=(\varphi_1,\dots,\varphi_d)$, that is
\[\e[e^{-\langle\lambda, \mathbf{X}_{\rm t}\rangle}]=e^{\langle {\rm t},\varphi(\lambda)\rangle}\,,\;\;\;{\rm t},\lambda\in 
\mathbb{R}_+^d\,.\] 
Such an additive L\'evy field will be called a spectrally positive additive L\'evy field (spaLf). This terminology is justified by the 
results of this section which extend fluctuation theory for spectrally positive L\'evy processes. Let us also introduce the field of 
essentially nonnegative matrices 
\[\{\mathbb{X}_{\rm t}, {\rm t}\in\mathbb{R}^{d}_{+}\}=\{(X^{i,j}_{t_j})_{i,j\in[d]}, {\rm t}\in \mathbb{R}_+^d\}.\] 
Note that the spaLf $\mathbf{X}$ can be defined as $\mathbf{X}_{\rm t}=\mathbb{X}_{\rm t}\cdot{\bf 1}$,
where ${\bf 1}={}^t(1,1,\dots,1)$. Moreover, we emphasize that the spaLf $\mathbf{X}$ carries on the same 
information as the field of essentially nonnegative matrices $\{\mathbb{X}_{\rm t}, {\rm t}\in\mathbb{R}^{d}_{+}\}$.
For this reason, the terminology 'spaLf' will refer indifferently to $\mathbf{X}$ or to $\mathbb{X}$.
Let ${\rm r}=(r_1,\dots,r_d)\in\mathbb{R}_+^d$, since $\mathbb{X}\in\mathcal{E}_{d}$ a.s., according to Lemma \ref{1377} there is 
almost surely a smallest solution to the system 
\begin{equation}\label{1886}
({\rm r}, \mathbb{X}) \qquad \sum_{j=1}^dX^{i,j}_{s_j-}=-r_i,\,i\in[d]_{\rm s}\,.
\end{equation}
\noindent We will denote by $\mathbf{T}_{\rm r}=(T_{\rm r}^{(1)},\dots,T_{\rm r}^{(d)})$ this solution and use the notation
\begin{equation}\label{2678}
\mathbf{T}_{\rm r}=\inf\{{\rm t}:\mathbf{X}_{{\rm t}-}=-{\rm r}\},\;\;\mbox{with}\;\;\mathbf{X}_{{\rm t}-}=
\left(\sum_{j=1}^dX^{i,j}_{t_j-}\right)_{i\in[d]}\,.
\end{equation}
Then $\mathbf{T}_{\rm r}$ will be referred to as the (multivariate) first hitting time of level $-{\rm r}$ by the spaLf
$\{\mathbf{X}_{\rm t}, {\rm t}\in\mathbb{R}^{d}_{+}\}$. Note that according to Lemma \ref{1377}, some of the coordinates of 
$\mathbf{T}_{\rm r}$ can be infinite.

\begin{proposition}\label{8427}
Let $\mathbf{X}$ be a spaLf  and for ${\rm r}\in\mathbb{R}_+^d$, let $\mathbf{T}_{\rm r}$ be its 
first hitting time of level $-{\rm r}$ as defined above. Then,
\begin{itemize}
\item[$1.$] for all $j\in[d]$ and ${\rm r}\in\mathbb{R}_+^d$, ${\rm X}^{(j)}_{T_{\rm r}^{(j)}-}={\rm X}^{(j)}_{T_{\rm r}^{(j)}}$ 
a.s. on $\{T_{\rm r}^{(j)}<\infty\}$. In particular, for all $i\in[d]$,
\begin{equation}\label{3266}
\sum_{j=1}^dX^{i,j}_{T_{\rm r}^{(j)}-}=\sum_{j=1}^dX^{i,j}_{T_{\rm r}^{(j)}}=-r_i
\;\;\;\mbox{a.s. on the set $\{T_{\rm r}^{(i)}<\infty\}$}\,.
\end{equation}
\item[$2.$] For all ${\rm r}'\in\mathbb{R}_+^d$ such that $\p(\mathbf{T}_{{\rm r}'}\in\mathbb{R}_+^d)>0$, conditionally on 
$\{\mathbf{T}_{{\rm r}'}\in\mathbb{R}_+^d\}$, the field $\{\mathbf{T}_{{\rm r}+{\rm r}'}-\mathbf{T}_{{\rm r}'},\,{\rm r}\in\mathbb{R}_+^d\}$
has the same law as the field $\{\mathbf{T}_{\rm r},\,{\rm r}\in\mathbb{R}_+^d\}$ and it is independent of the field
$\{\mathbf{T}_{\rm r},\,{\rm r}\le{\rm r}'\}$.
In particular, for all ${\rm r},{\rm r}'\in\mathbb{R}_+^d$, 
\begin{equation}\label{3683}
\mathbf{T}_{{\rm r}+{\rm r}'}\ed \mathbf{T}_{\rm r}+ \tilde{\mathbf{T}}_{{\rm r}'},
\end{equation}
where $\tilde{\mathbf{T}}_{{\rm r}'}$ is an independent copy of $\mathbf{T}_{{\rm r}'}$. 
\item[$3.$] If $\p(\mathbf{T}_{\rm r}\in\mathbb{R}_+^d)>0$ for some ${\rm r}\in(0,\infty)^d$, then 
$\p(\mathbf{T}_{\rm r}\in\mathbb{R}_+^d)>0$ for all ${\rm r}\in\mathbb{R}^d_+$. Under this condition,
there is a mapping $\phi=(\phi_1,\dots,\phi_d):\mathbb{R}_+^d\rightarrow(0,\infty)^d$ such that 
\begin{equation}\label{5147}
\e[e^{-\langle\lambda,\mathbf{T}_{\rm r}\rangle}]=
e^{-\langle\phi(\lambda),{\rm r}\rangle},\;\;\; \lambda\in\mathbb{R}_+^d\,.
\end{equation}
Moreover, the mapping $\phi$ is differentiable and each $\phi_i$ is a concave function. 
\end{itemize}
\end{proposition}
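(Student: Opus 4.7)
For assertion $1$, I would exploit the recursive construction of $\mathbf{T}_{\rm r}$ from the proof of Lemma \ref{1377}: setting $v^{(1)}_j=r_j$, $s^{(n)}_j=\inf\{t:X^{j,j}_{t-}=-v^{(n)}_j\}$, and $v^{(n+1)}_j=r_j+\sum_{k\ne j}X^{j,k}(s^{(n)}_k-)$, induction shows each $v^{(n)}_j$ is $\sigma({\rm X}^{(k)}:k\ne j)$-measurable, hence so is its limit $v^{(\infty)}_j$, which is in particular independent of ${\rm X}^{(j)}$. After conditioning on $v^{(\infty)}_j$, $T^{(j)}_{\rm r}$ becomes a first passage time of ${\rm X}^{(j)}$ at a deterministic level of its spectrally positive coordinate $X^{j,j}$; since by (\ref{2552}) the L\'evy measure of ${\rm X}^{(j)}$ is concentrated on $\mathbb{R}_+^d$, the classical fluctuation argument (first passage times of spectrally positive L\'evy processes a.s.\ avoid jump times) yields ${\rm X}^{(j)}_{T^{(j)}_{\rm r}-}={\rm X}^{(j)}_{T^{(j)}_{\rm r}}$ on $\{T^{(j)}_{\rm r}<\infty\}$. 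Equation (\ref{3266}) then follows directly from (\ref{1886}).

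For assertion $2$, Part $1$ enables a clean strong Markov argument. Setting $\tilde{\rm X}^{(j)}_u:={\rm X}^{(j)}_{T^{(j)}_{{\rm r}'}+u}-{\rm X}^{(j)}_{T^{(j)}_{{\rm r}'}}$, the strong Markov property applied to each ${\rm X}^{(j)}$ at $T^{(j)}_{{\rm r}'}$, combined with the independence of ${\rm X}^{(1)},\dots,{\rm X}^{(d)}$, shows that conditionally on $\{\mathbf{T}_{{\rm r}'}\in\mathbb{R}_+^d\}$, $(\tilde{\rm X}^{(j)})_{j\in[d]}$ is an independent copy of $({\rm X}^{(j)})_{j\in[d]}$, independent of the past. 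Invoking Part $1$ to write $X^{i,j}_{T^{(j)}_{{\rm r}+{\rm r}'}-}=X^{i,j}_{T^{(j)}_{{\rm r}'}}+\tilde{X}^{i,j}_{(T^{(j)}_{{\rm r}+{\rm r}'}-T^{(j)}_{{\rm r}'})-}$, the system $({\rm r}+{\rm r}',\mathbb{X})$ reduces to the system $({\rm r},\tilde{\mathbb{X}})$ for the increment $\mathbf{T}_{{\rm r}+{\rm r}'}-\mathbf{T}_{{\rm r}'}$; Lemma \ref{1377} then identifies this increment as the first hitting time of $-{\rm r}$ by the shifted spaLf, giving (\ref{3683}) and the full stationarity-independence statement.

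For assertion $3$, positivity propagation follows by iterating (\ref{3683}) to obtain $\p(\mathbf{T}_{n{\rm r}_0}\in\mathbb{R}_+^d)=\p(\mathbf{T}_{{\rm r}_0}\in\mathbb{R}_+^d)^n>0$ and then dominating any ${\rm r}\le n{\rm r}_0$ via Lemma \ref{1377} part $2$. With the convention $e^{-\infty}=0$, (\ref{3683}) makes ${\rm r}\mapsto\e[e^{-\langle\lambda,\mathbf{T}_{\rm r}\rangle}]$ multiplicative on $\mathbb{R}_+^d$, and right-continuity from Lemma \ref{1377} part $2$ (plus dominated convergence) solves the corresponding Cauchy equation to yield (\ref{5147}) with $\phi$ taking values in $[0,\infty)^d$; strict positivity of each $\phi_i$ comes from the fact that $\mathbf{T}_{\rm r}$ has no atom at $0$. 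For smoothness and concavity, I would characterize $\phi$ as the componentwise inverse of $-\varphi$ via optional stopping of the exponential martingale $\exp(-\langle\theta,\mathbf{X}_{\rm t}\rangle-\langle{\rm t},\varphi(\theta)\rangle)$ at $\mathbf{T}_{\rm r}$; using $\mathbb{X}_{\mathbf{T}_{\rm r}}\cdot{\bf 1}=-{\rm r}$ from Part $1$, this produces the identity $\e[\exp(\sum_j\varphi_j(\theta)T_{\rm r}^{(j)});\mathbf{T}_{\rm r}\in\mathbb{R}_+^d]=e^{-\langle\theta,{\rm r}\rangle}$. The smoothness and componentwise convexity of each $\varphi_j$ (visible from (\ref{2552})) then transfer, via the implicit function theorem, to smoothness of $\phi$ and concavity of each $\phi_i$.

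The hard part will be assertion $1$: one must establish continuity of the \emph{whole} vector ${\rm X}^{(j)}$, not only of its spectrally positive coordinate $X^{j,j}$, at $T^{(j)}_{\rm r}$, which is delicate because the subordinator coordinates $X^{i,j}$ ($i\ne j$) carry their own jumps; the crucial observation is that $v^{(\infty)}_j$ is measurable with respect to processes independent of ${\rm X}^{(j)}$, so after conditioning one reduces to a first passage time of a L\'evy process at a deterministic level, which a.s.\ avoids its countable set of jump times. A secondary difficulty is making the multi-parameter optional stopping in Part $3$ rigorous, which can be addressed by decomposing into single-parameter stoppings using the independence of the ${\rm X}^{(j)}$ and the product structure of the exponential martingale.
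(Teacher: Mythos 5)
Your argument for assertion $1$ contains a genuine gap. You claim that $v^{(\infty)}_j$ is $\sigma({\rm X}^{(k)}:k\ne j)$-measurable, so that after conditioning $T^{(j)}_{\rm r}$ becomes a first-passage time of ${\rm X}^{(j)}$ at a deterministic level. This induction breaks down already at the third step, even for $d=2$. Indeed $v^{(3)}_j=r_j+\sum_{k\neq j}X^{j,k}(s^{(2)}_k-)$, and for $k\neq j$ the time $s^{(2)}_k=\inf\{t:X^{k,k}_{t-}=-v^{(2)}_k\}$ involves $v^{(2)}_k=r_k+\sum_{m\neq k}X^{k,m}(s^{(1)}_m-)$, which contains the term $X^{k,j}(s^{(1)}_j-)$: a coordinate of ${\rm X}^{(j)}$ evaluated at a time $s^{(1)}_j=\inf\{t:X^{j,j}_{t-}=-r_j\}$ that is itself a functional of ${\rm X}^{(j)}$. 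Hence $v^{(\infty)}_j$ depends on ${\rm X}^{(j)}$ and the conditioning reduction to a deterministic level is not available. The paper avoids this entirely: it enlarges the filtration to $\mathcal{G}^{(j)}_t=\sigma(\mathcal{F}^{(j)}_t\cup\bigcup_{i\neq j}\mathcal{F}^{(i)}_\infty)$, checks that $T^{(j)}_{\rm r}$ is a $\mathcal{G}^{(j)}$-stopping time and that ${\rm X}^{(j)}$ remains a L\'evy process in this filtration (by independence), and then invokes quasi-left continuity via the approximating increasing sequence $T^{(j)}_{{\rm r}-{\rm e}_j/n}\uparrow T^{(j)}_{\rm r}$ from Lemma~\ref{1377}.$2$. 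Quasi-left continuity requires only that the approximating times be stopping times, not that the limit be a hitting time of a deterministic level, which is exactly what your route needs and cannot secure.

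Your assertion $2$ is the same argument as the paper's (strong Markov applied coordinatewise, using assertion $1$ and the independence of the columns). For assertion $3$, the positivity propagation and the Cauchy-equation derivation of \eqref{5147} match the paper, but your route to differentiability and concavity is both heavier and potentially circular: you want to identify $\phi$ with the inverse of $\varphi$ via multi-parameter optional stopping of the exponential martingale, yet that identity is precisely Theorem~\ref{4526}, proved after Proposition~\ref{8427} and under the hypothesis $(H)$, with the optional-stopping machinery only developed in Theorem~\ref{4126}. The paper's observation is much shorter: from \eqref{5147} with ${\rm r}=r{\rm e}_i$ one reads off $\phi_i(\lambda)=-r^{-1}\log\mathbb{E}[e^{-\langle\lambda,\mathbf{T}_{r{\rm e}_i}\rangle}]$, and log-convexity and smoothness of the Laplace transform of a nonnegative random vector give concavity and differentiability of $\phi_i$ directly. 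Also, your displayed identity should read $\mathbb{E}[\exp(-\sum_j\varphi_j(\theta)T^{(j)}_{\rm r});\mathbf{T}_{\rm r}\in\mathbb{R}^d_+]=e^{-\langle\theta,{\rm r}\rangle}$ (the sign in front of the $\varphi_j$'s is wrong as written) and $\phi$ inverts $\varphi$, not $-\varphi$.
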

\begin{proof}
The first assertion is a consequence of quasi-left continuity for L\'evy processes. Indeed, let us denote by 
$(\mathcal{F}_t^{(j)})_{t\geq 0}$ the natural filtration generated by ${\rm X}^{(j)}$ and set 
$\mathcal{F}_\infty^{(j)}=\sigma\left(\bigcup\limits_{t\ge0}\mathcal{F}_t^{(j)}\right)$. Then for all $t_j\ge0$, the set 
\[\{T_{\rm r}^{(j)}\le t_j\}=\bigcup\limits_{\stackrel{{\rm u}\in
(\mathbb{Q}\cup\{\infty\})^d}{u_j=t_j}}
\left\lbrace\exists \;{\rm s}\le{\rm u}:r_i+\sum_{k=1}^dX^{i,k}_{s_k-}=0,\;\;i\in[d]_{\rm s}\right\rbrace\]
belongs to the sigma-field $\mathcal{G}_{t_j}^{(j)}:=\sigma\left(\mathcal{F}_{t_j}^{(j)}
\cup\left(\bigcup\limits_{i\neq j} \mathcal{F}_\infty^{(i)}\right)\right)$, so that $T_{\rm r}^{(j)}$ is a stopping time of the filtration 
$(\mathcal{G}_{t}^{(j)})_{t\ge0}$. Moreover, since the processes ${\rm X}^{(i)}$, $i\in[d]$ are independent, ${\rm X}^{(j)}$ 
is a L\'evy process in the latter filtration. Now let us consider the sequence $(\mathbf{T}_{{\rm r}_{n}})_{n\geq 1}$, where
${\rm r}_{n}={\rm r}-{\rm e}_j/n$. Then from part 2.~of Lemma \ref{1377}, $T_{{\rm r}_n}^{(j)}$ is an increasing sequence of 
$(\mathcal{G}_{t}^{(j)})$-stopping times and this sequence satisfies 
$\lim\limits_{n\rightarrow\infty}T_{{\rm r}_n}^{(j)}=T_{{\rm r}}^{(j)}$. Therefore from quasi-left continuity of ${\rm X}^{(j)}$, 
see Proposition I.7 in \cite{be}, ${\rm X}^{(j)}_{T_{\rm r}^{(j)}-}={\rm X}^{(j)}_{T_{\rm r}^{(j)}}$ a.s. on 
$\{T_{\rm r}^{(j)}<\infty\}$. It clearly implies (\ref{3266}).\\

In order to prove $2.$ it suffices to see that conditionally on $\{\mathbf{T}_{{\rm r}'}\in\mathbb{R}^{d}_{+}\}$, 
the stochastic field $\{\tilde{\mathbf{X}}_{\rm t},\rm t\in\mathbb{R}^{d}_{+}\}=
\{\mathbf{X}_{\mathbf{T}_{{\rm r}'}+{\rm t}}+{\rm r}',\,{\rm t}\in\mathbb{R}_+^d\}$ is independent of 
$\{\mathbf{X}_{\rm t},\,{\rm t}\leq {\mathbf{T}_{{\rm r}'}}\}$ and has the same law as 
$\{\mathbf{X}_{\rm t},\,{\rm t}\in\mathbb{R}_+^d\}$. We conclude by noticing that  
$\tilde{\mathbf{T}}_{{\rm r}}=\inf\{\rm t : \tilde{\mathbf{X}}_{t}=-{\rm r}\}=\mathbf{T}_{{\rm r}+{\rm r}'}-\mathbf{T}_{{\rm r}'}$.\\

Assertion $3.$ follows from Lemma \ref{1377} and (\ref{3683}). Indeed, if there exists ${\rm r}\in (0,\infty)^{d}$ such that 
$\mathbb{P}(\mathbf{T}_{\rm r}\in\mathbb{R}^{d}_{+})>0$ then from Lemma \ref{1377}, for all $\bar{{\rm r}}\leq {\rm r}$, 
$\mathbf{T}_{\bar{{\rm r}}}\leq \mathbf{T}_{\rm r}$ a.s. and in particular, 
$\mathbb{P}(\mathbf{T}_{\bar{{\rm r}}}\in\mathbb{R}^{d}_{+})>0$. On the other hand, for all ${\rm r}'\in(0,\infty)^{d}$, identity 
(\ref{3683}) implies that $\mathbf{T}_{{\rm r}'}\ed  
\mathbf{T}^{(1)}_{{\rm r}^{(1)}}+...+\mathbf{T}^{(p)}_{{\rm r}^{(p)}}$ where $p\geq 1$, the ${\rm r}^{(i)}$'s are such that 
${\rm r}^{(i)}\leq \rm r$ for all $i\in[p]$, ${\rm r}^{(1)}+...+{\rm r}^{(p)}={\rm r}'$, and the $\mathbf{T}^{(i)}$'s are 
independent copies of $\mathbf{T}$. As a consequence, we obtain $\mathbb{P}(\mathbf{T}_{{\rm r}'}\in\mathbb{R}^{d}_{+})=
\prod\limits_{i=1}^{p}\mathbb{P}(\mathbf{T}^{(i)}_{{\rm r}^{(i)}}\in\mathbb{R}^{d}_{+})>0$. Now let us  prove the second part 
of this assertion. Let ${\rm r}\in(0,\infty)^{d}$ be such that $\mathbb{P}(\mathbf{T}_{\rm r}\in\mathbb{R}^{d}_{+})>0$ and let
$\lambda\in\mathbb{R}^{d}_{+}$, then by (\ref{3683}), for all ${\rm r}'\in(0,\infty)^{d}$, 
\begin{eqnarray*}
0<f(\lambda,{\rm r}+{\rm r}')&=&\mathbb{E}[e^{-\langle \lambda,\mathbf{T}_{{\rm r}+{\rm r}'}\rangle}]\\
&=&\mathbb{E}[e^{-\langle \lambda, \mathbf{T}_{{\rm r}}\rangle}]\mathbb{E}[e^{-\langle \lambda, \mathbf{T}_{{\rm r}'}\rangle}]
=f(\lambda,{\rm r})f(\lambda,{\rm r}')\,.
\end{eqnarray*}
Since $f$ is continuous and $f(\lambda,0)=\p(\mathbf{T}_{\rm r}\in\mathbb{R}^d_+)>0$, this equation implies that 
$f(\lambda,\rm r)=e^{-\langle \phi(\lambda),
{\rm r}\rangle}$, for some $\phi(\lambda)\in\mathbb{R}^{d}$. Furthermore take ${\rm r}=r{\rm e}_i$, for some $r>0$ and 
$i\in[d]$, so that $\e[e^{-\langle\lambda,\mathbf{T}_{\rm r}\rangle}]=e^{-r\phi_i(\lambda)}$. Then from right continuity, 
$\mathbf{T}_{\rm r}>0$ almost surely, so that $f(\lambda,\rm r)<1$, for all $\lambda\in\mathbb{R}^{d}_{+}$ such that 
$\lambda>0$ and thus $\phi_i(\lambda)\in(0,\infty)$. On the other hand it is plain from (\ref{5147}), the $\phi_{j}$'s are  
concave functions for all $j\in[d]$ and $\phi$ is differentiable.
\end{proof}

\noindent Note that in $(\ref{3266})$, if for some $j\neq i$, $T_{\rm r}^{(j)}=\infty$ with positive probability on the set 
$\{T_{\rm r}^{(i)}<\infty\}$, then $X^{i,j}\equiv0$, a.s. This is due to the fact that $X^{i,j}$ are subordinators for $i\neq j$, 
therefore either $X^{i,j}\equiv0$ a.s.~or $X^{i,j}_\infty=\infty$ a.s.\\

\noindent Let us emphasize the following direct consequence of Theorem \ref{8427},
\begin{equation}\label{3789}
\p(\mathbf{T}_{\rm r}\in\mathbb{R}_+^d)=e^{-\langle\phi(0),{\rm r}\rangle}\,, 
\end{equation}
so that in particular $\p(\mathbf{T}_{\rm r}\in\mathbb{R}_+^d)=1$, for all ${\rm r}\in(0,\infty)^d$ if and only if $\phi(0)=0$.
Note also that Lemma \ref{8427} does not allow us a full description of the law of the $d$-dimensional stochastic field 
$\{\mathbf{T}_{\rm r},{\rm r}\in\mathbb{R}_+^d\}$. This is the case only when $d=1$. In particular for $d\ge2$, if ${\rm r}$ and
${\rm r}'$ are not ordered, then we do not know the joint law of $(\mathbf{T}_{\rm r},\mathbf{T}_{\rm r'})$. Moreover,
looking at part $2$.~of Proposition $\ref{8427}$, one is tempted to think that, when $d\ge2$, the field 
$\{\mathbf{T}_{\rm r},\,{\rm r}\in\mathbb{R}_+^d\}$ is a spaLf, but it is actually not the case.  Indeed from the construction of this
field, the processes $\{\mathbf{T}_{r{\rm e}_i},\,r\ge0\}$, $i\in[d]$ are clearly not independent. However, it is easy to derive from 
Proposition $\ref{8427}$, that each of these processes is a multivariate subordinator whose Laplace exponent is $\phi_i$.
The following result provides an expression of its L\'evy measure. It is a consequence of further results 
(e.g. Theorem \ref{3492}) and it will be proved at the end of this paper. 

\begin{corollary}\label{8466}
Assume that $\p(\mathbf{T}_{\rm r}\in\mathbb{R}_+^d)>0$ for all ${\rm r}\in(0,\infty)^d$.
Then for all $i\in[d]$, the process $\{\mathbf{T}_{r{\rm e}_i},\,r\ge0\}$ is a multivariate subordinator whose Laplace exponent is 
$\phi_i$ given in $(\ref{5147})$. Assume moreover for all $j\in[d]$, 
the $j$-th column ${\rm X}_{\rm t}^{(j)}$ of the matrix $\mathbb{X}_{\rm t}$ admits a density 
which is continuous on $F_1\times F_2\times\dots\times F_d$, where $F_i=\mathbb{R}_+$, for $i\neq j$ 
and $F_j=\mathbb{R}$. Define the matrix $\widehat{\mathbb{X}}_{\rm t}=(\widehat{X}^{i,j}_{t_j})_{i,j\in[d]}$ by
$\widehat{X}^{i,i}_{t_i}=\sum\limits_{j=1}^{d} X^{i,j}_{t_j}$ and $\widehat{X}^{i,j}_{t_j}=X^{i,j}_{t_j}$, $i\neq j$,
and let $p_{\rm t}:M_d(\mathbb{R})\rightarrow \mathbb{R}$ be the density of $\widehat{\mathbb{X}}_{\rm t}$.
Then the L\'evy measure of the multivariate subordinator $\{\mathbf{T}_{r{\rm e}_i},\,r\ge0\}$ is given by 
\[\nu_{i}({\rm dt})
	= \int\limits_{\mathbb{R}^{d(d-1)}_{+}}
\frac{\det(-\overline{\textsc{x}}^{i,i})}{t_{1}\dots t_{d}} p_t(\textsc{x}^{0})\prod_{k\neq j}{\rm d}x_{kj}{\rm dt},\;\;\mbox{if $d>1$ and}\;\;
\nu({\rm d}t)=\frac{p_t(0)}t{\rm d}t,\;\;\mbox{if $d=1$}.\]
Here $\overline{\textsc{x}}^{i,i}$ is the matrix $\overline{\textsc{x}}=(\overline{x}_{i,j})_{i,j\in[d]}$ given by 
$\overline{x}_{i,i}=-\sum\limits_{j\neq i} x_{i,j}$ and $\overline{x}_{i,j}=x_{i,j}$ for $i\neq j$ to which line and column of index $i$
have been removed and $\textsc{x}^{0}=(x^0_{i,j})_{i,j\in[d]}$, where $x^0_{i,j}=x_{i,j}$, for $i\neq j$ and $x^0_{i,i}=0$.
\end{corollary}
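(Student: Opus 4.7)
The first assertion---that $\{\mathbf{T}_{r{\rm e}_{i}},\,r\ge0\}$ is a multivariate subordinator with Laplace exponent $\phi_{i}$---is an immediate consequence of Proposition \ref{8427}. The stationarity and independence of increments from part $2.$ of that proposition, combined with the monotonicity in part $2.$ of Lemma \ref{1377}, show that $r\mapsto\mathbf{T}_{r{\rm e}_{i}}$ is an $\mathbb{R}_{+}^{d}$-valued process with stationary, independent, non-decreasing increments, i.e.\ a (possibly killed) multivariate subordinator; part $3.$ specialised to ${\rm r}=r{\rm e}_{i}$ in $(\ref{5147})$ yields $\e[e^{-\langle\lambda,\mathbf{T}_{r{\rm e}_{i}}\rangle}]=e^{-r\phi_{i}(\lambda)}$, identifying its Laplace exponent as $\phi_{i}$.

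For the L\'evy measure, my plan is to recover $\nu_{i}$ from the vague limit $r^{-1}\p(\mathbf{T}_{r{\rm e}_{i}}\in{\rm dt})\to\nu_{i}({\rm dt})$ on $\mathbb{R}_{+}^{d}\setminus\{0\}$ as $r\to 0^{+}$, applied to the density of $\mathbf{T}_{r{\rm e}_{i}}$ produced by Theorem \ref{3492}. Under the continuity hypothesis on the column densities $f_{{\rm t}}^{(j)}$ of ${\rm X}^{(j)}_{t_{j}}$, the Kemperman-type identity becomes a genuine density equality. I would then parameterise the affine constraint $-\textsc{x}\cdot{\bf 1}={\rm r}$ by the $d(d-1)$ off-diagonal entries $(x_{k,l})_{k\neq l}$, setting $x_{k,k}=-r_{k}-\sum_{l\neq k}x_{k,l}$---a change of variables with unit Jacobian---and marginalise over the matrix $\mathbb{X}_{\mathbf{T}_{{\rm r}}}$. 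At ${\rm r}=r{\rm e}_{i}$ this produces
\[
\p(\mathbf{T}_{r{\rm e}_{i}}\in{\rm dt})=\left[\int_{\mathbb{R}_{+}^{d(d-1)}}\frac{\det(-\textsc{x})}{t_{1}\cdots t_{d}}\prod_{j=1}^{d}f_{{\rm t}}^{(j)}(x_{\cdot,j})\prod_{k\neq l}{\rm d}x_{k,l}\right]{\rm dt},
\]
with $x_{i,i}=-r-\sum_{l\neq i}x_{i,l}$ and $x_{k,k}=-\sum_{l\neq k}x_{k,l}$ for $k\neq i$.

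The essential computation is then a first-order Taylor expansion of $\det(-\textsc{x})$ in $r$ at $r=0$. When $r=0$ the relation $x_{k,k}=-\sum_{l\neq k}x_{k,l}$ holds for every $k$, which is exactly the condition $\textsc{x}=\overline{\textsc{x}}$; in particular $\textsc{x}\cdot{\bf 1}=0$ and $\det(-\textsc{x})=0$. Since only $x_{i,i}$ depends on $r$, with $\partial x_{i,i}/\partial r=-1$, cofactor expansion along row $i$ gives $\det(-\textsc{x})=r\,\det(-\overline{\textsc{x}}^{i,i})+o(r)$, with $\overline{\textsc{x}}^{i,i}$ precisely the minor appearing in the statement. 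Furthermore, by the very definitions of $\widehat{\mathbb{X}}$ and $\textsc{x}^{0}$, evaluating $p_{{\rm t}}$ at $\textsc{x}^{0}$ imposes $\widehat{x}_{i,i}=\sum_{l}x_{i,l}=0$, i.e.\ $x_{i,i}=-\sum_{l\neq i}x_{i,l}$, so that $p_{{\rm t}}(\textsc{x}^{0})$ coincides with $\prod_{j}f_{{\rm t}}^{(j)}(x_{\cdot,j})$ under the $r=0$ substitution. Dividing the density above by $r$ and letting $r\to 0^{+}$ thus produces the announced formula.

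The main technical obstacle will be justifying the interchange of limit and integral, for which the continuity of the $f_{{\rm t}}^{(j)}$'s combined with dominated convergence on compact subsets of $\mathbb{R}_{+}^{d}\setminus\{0\}$ should be sufficient. For $d=1$ the product over $k\neq l$ is empty, the empty-matrix minor determinant equals $1$, and the argument reduces to the classical Kemperman identity $\p(T_{r}\in{\rm d}t)=(r/t)\,p_{t}(-r)\,{\rm d}t$, whence $\nu({\rm d}t)=t^{-1}p_{t}(0)\,{\rm d}t$ on dividing by $r$ and letting $r\to 0^{+}$.
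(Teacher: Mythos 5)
Your proof is correct and follows essentially the same route as the paper: apply Theorem \ref{3492} with $\mu=0$, parameterise the constraint $\textsc{x}\cdot{\bf 1}=-{\rm r}$ by the $d(d-1)$ off-diagonal entries (a unit-Jacobian change of variables), reduce $\det(-\textsc{x})$ to $r\det(-\overline{\textsc{x}}^{i,i})$ by cofactor expansion in the $(i,i)$-entry, and identify $\nu_i$ as the vague limit of $r^{-1}\p(\mathbf{T}_{r{\rm e}_i}\in{\rm dt})$ as $r\to 0$. The only cosmetic differences are that the paper pulls the ${\rm r}$-dependence out via the auxiliary Laplace variable $\alpha$ and phrases everything in terms of the density $p_{\rm t}$ of $\widehat{\mathbb{X}}_{\rm t}$ rather than the column densities $f_{\rm t}^{(j)}$ of $\mathbb{X}_{\rm t}$, and that your ``$+o(r)$'' is in fact exact since $\det$ is affine in the single entry $x_{i,i}$.
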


We will now define a $d$-dimensional L\'evy process whose law is obtained from the law of ${\rm X}^{(j)}$ through the Esscher 
transform associated to the martingale 
\[(e^{-\langle\mu^{(j)}, {\rm X}^{(j)}_t\rangle-t\varphi_j(\mu^{(j)})})_{t\geq 0}\,,\]
for any $\mu^{(j)}\in\mathbb{R}_+^d$. Recall that $(\mathcal{F}_t^{(j)})_{t\geq 0}$ denotes the natural filtration generated by 
${\rm X}^{(j)}$. Then for $t\ge0$ and $A\in\mathcal{F}_t^{(j)}$, the law of this new L\'evy process is defined by 
\[\p^{\mu^{(j)}}(A)=\e[\ind_{A}e^{-\langle\mu^{(j)}, {\rm X}^{(j)}_t\rangle-t\varphi_j(\mu^{(j)})}]\,.\]
Let us now consider $d$ independent L\'evy processes ${\rm X}^{\mu^{(j)},(j)}$, $j\in[d]$ with respective laws $\p^{\mu^{(j)}}$.
The Laplace exponent of ${\rm X}^{\mu^{(j)},(j)}$ is given by 
\[\varphi_{j}^{\mu^{(j)}}(\lambda)=\varphi_j(\lambda+\mu^{(j)})-\varphi_j(\mu^{(j)})\,,\;\;\;\lambda\in\mathbb{R}_+^d\,.\]
Moreover, a new spaLf is obtained by setting
\begin{equation}\label{3489}
\mathbf{X}_{\rm t}^\mu:={\rm X}_{t_1}^{\mu^{(1)},(1)}+\dots+{\rm X}^{\mu^{(d)},(d)}_{t_d}\,,\;\;{\rm t}=
(t_1,\dots,t_d)\in\mathbb{R}_+^d,
\end{equation}
where $\mu=(\mu^{(1)},\dots,\mu^{(d)})\in M_d(\mathbb{R}_+)$ is the matrix whose columns are equal to $\mu^{(j)}$, $j\in[d]$. 
Let us set $\mathcal{F}_{\rm t}=\sigma\{\mathbf{X}_{\rm s},\,{\rm s}\le {\rm t}\}$ for all ${\rm t}\in\mathbb{R}^{d}_{+}$, then
$\mathcal{F}_{\rm t}=\sigma(\mathcal{F}_{t_1}^{(1)}\cup \mathcal{F}_{t_2}^{(2)}\dots\cup\mathcal{F}_{t_d}^{(d)})$ and the law of 
the spaLf $\mathbf{X}^\mu$ is given by,
\begin{equation}\label{3790}
\p^{\mu}(A)=\e[\ind_{A}e^{-\llangle\mu, \mathbb{X}_{\rm t}\rrangle-\langle{\rm t},\bar{\varphi}(\mu)\rangle}],\;\;\;{\rm t}\in
\mathbb{R}^d_+,\;\;\;A\in\mathcal{F}_{\rm t},
\end{equation}
where we have set $\bar{\varphi}(\mu)=(\varphi_1(\mu^{(1)}),\dots,\varphi_d(\mu^{(d)}))$ and we recall that 
$\llangle\mu,\mathbb{X}_{\rm t}\rrangle = \sum\limits_{j\in[d]} 
\langle\mu^{(j)},{\rm X}^{(j)}_{t_{j}}\rangle$. We will refer to (\ref{3790}) as the Esscher transform of the additive field $\mathbf{X}$. 
The Laplace exponent of $\mathbf{X}^\mu$ is then
\[\varphi^\mu(\lambda):=(\varphi_{1}^{\mu^{(1)}}(\lambda),\dots,\varphi_{d}^{\mu^{(d)}}(\lambda))\,,\;\;\;\lambda\in\mathbb{R}_+^d\,.\]    

Let us denote by $J_{\varphi}(\lambda)$, $\lambda\in (0,+\infty)^{d}$, the transpose of the opposite of the Jacobian matrix of 
$\varphi$, that is 
\begin{equation}\label{8256}
J_\varphi(\lambda)_{i,j}=-\frac{\partial}{\partial \lambda_i}\varphi_j(\lambda)\,,\;\;\;i,j\in[d]\,.
\end{equation}
Recall that since all processes $X^{i,j}$, $i,j\in[d]$, are spectrally positive L\'evy processes, their expectation is always
defined and $\e[X^{i,j}_1]\in(-\infty,\infty]$. Moreover $\varphi$ is differentiable on $(0,\infty)^d$ and the partial derivatives 
of $\varphi$ at 0 satisfy $\e[X^{i,j}_1]=-\lim\limits_{\lambda\rightarrow0}\frac{\partial}{\partial \lambda_i}\varphi_j(\lambda)$. 
We will set 
$\frac{\partial}{\partial \lambda_i}\varphi_j(0):=\lim\limits_{\lambda\rightarrow0}\frac{\partial}{\partial \lambda_i}\varphi_j(\lambda)$,
and 
\begin{equation}\label{8059}
J_\varphi(0)_{i,j}=-\frac{\partial}{\partial \lambda_i}\varphi_j(0)=\e[X^{i,j}_1]\,,\;\;\;i,j\in[d]\,.
\end{equation}
Then let us consider the following hypothesis:
\[(H)\;\;\;\;\;\mbox{The set $D:=\{\lambda\in\mathbb{R}_+^d:\varphi_j(\lambda)>0,\,j\in[d]\}$ is non empty.}\]
This hypothesis implies in particular that none of the processes $X^{j,j}$, $j\in[d]$ is a subordinator but it is actually stronger 
as we will see later on. Moreover since all $X^{i,j}$, $i\neq j$ are subordinators, it is clear that actually $D\subset(0,\infty)^d$.

\begin{theorem}\label{4526} Let ${\rm r}=(r_1,\dots,r_d)\in\mathbb{R}_+^d$ and let 
$\mathbf{T}_{\rm r}=(T_{\rm r}^{(1)},\dots,T_{\rm r}^{(d)})\in\overline{\mathbb{R}}_+^d$ be the first hitting time of level $-{\rm r}$ 
by the spaLf $\mathbf{X}$, then
\begin{itemize}
\item[$1.$]  $\mathbf{T}_{\rm r}\in{\mathbb{R}}_+^d$ holds with positive probability for some $($and hence for all$)$ 
${\rm r}\in\mathbb{R}_+^d$ if and only if $(H)$ holds.
\item[$2.$] Suppose that $(H)$ holds, then $\phi(\lambda)\in D$, for all $\lambda\in(0,\infty)^d$. Moreover, the mapping 
$\phi:(0,\infty)^d\rightarrow D$ is a diffeomorphism whose inverse corresponds to the mapping $\varphi:D\rightarrow(0,\infty)^d$, 
that is
\[\varphi(\phi(\lambda))=\lambda\,, \;\; \lambda\in(0,\infty)^d.\]
\end{itemize}
\end{theorem}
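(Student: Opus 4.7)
The proof hinges on a master identity: for every $\nu\in D$,
\begin{equation}\label{myast}
\e\bigl[e^{-\langle\mathbf{T}_{\rm r},\varphi(\nu)\rangle};\,\mathbf{T}_{\rm r}\in\mathbb{R}_+^d\bigr]=e^{-\langle\nu,{\rm r}\rangle},\qquad {\rm r}\in\mathbb{R}_+^d.
\end{equation}
The natural tool is the nonnegative multiparameter exponential martingale
\[M^\nu_{\rm t}=e^{-\langle\nu,\mathbf{X}_{\rm t}\rangle-\langle{\rm t},\varphi(\nu)\rangle}=\prod_{j=1}^d e^{-\langle\nu,{\rm X}^{(j)}_{t_j}\rangle-t_j\varphi_j(\nu)},\]
of constant mean one by the independence of the ${\rm X}^{(j)}$. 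Since $\mathbf{X}_{\mathbf{T}_{\rm r}}=-{\rm r}$ on $\{\mathbf{T}_{\rm r}\in\mathbb{R}_+^d\}$ by Proposition~\ref{8427}(1), \eqref{myast} is equivalent to $\e[M^\nu_{\mathbf{T}_{\rm r}};\,\mathbf{T}_{\rm r}\in\mathbb{R}_+^d]=1$.

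To establish this I would approximate by the bounded stopping $\mathbf{T}_{\rm r}\wedge\mathbf{n}$ with $\mathbf{n}=(n,\dots,n)$: each $T_{\rm r}^{(j)}$ is a stopping time of the enlarged filtration $(\mathcal{G}^{(j)}_t)$ under which ${\rm X}^{(j)}$ remains a L\'evy process (as in the proof of Proposition~\ref{8427}(1)), and iterated conditioning on $\bigvee_{k\neq j}\mathcal{F}^{(k)}_\infty$ then reduces the identity $\e[M^\nu_{\mathbf{T}_{\rm r}\wedge\mathbf{n}}]=1$ factor by factor to one-parameter Doob optional stopping. Passing $n\to\infty$ is the main technical step: on $\{\mathbf{T}_{\rm r}\leq\mathbf{n}\}$ dominated convergence delivers the target contribution $\e[e^{\langle\nu,{\rm r}\rangle-\langle\mathbf{T}_{\rm r},\varphi(\nu)\rangle};\mathbf{T}_{\rm r}\in\mathbb{R}_+^d]$; on the complement some $T_{\rm r}^{(j)}>n$ and the suppressing factor $e^{-n\varphi_j(\nu)}\to 0$ (because $\varphi_j(\nu)>0$ for $\nu\in D$), while the almost sure decay $M^{(j)}_n\to 0$ (from Jensen's inequality, which ensures $\varphi_j(\nu)+\langle\nu,\e[{\rm X}^{(j)}_1]\rangle\geq 0$, together with the SLLN for L\'evy processes) combined with the nonnegativity of ${\rm X}^{i,k}$ for $i\neq k$ forces the residual to zero.

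Assertion 1 now drops out. For the sufficient direction, pick $\nu\in D$ and read the positivity of $\p(\mathbf{T}_{\rm r}\in\mathbb{R}_+^d)$ off \eqref{myast}. For the necessary direction, assume $\p(\mathbf{T}_{\rm r}\in\mathbb{R}_+^d)>0$ and invoke Proposition~\ref{8427}(3) to produce $\phi\colon(0,\infty)^d\to(0,\infty)^d$; fix any $\lambda\in(0,\infty)^d$, set $\nu:=\phi(\lambda)$, and rerun the stopping argument (the martingale $M^\nu$ is defined for every $\nu\in\mathbb{R}_+^d$, the sign condition on $\varphi(\nu)$ intervenes only at the limit step): comparison with (\ref{5147}) then yields $\phi(\varphi(\nu))=\nu$, whence $\varphi(\nu)=\lambda>0$, so $\nu\in D$ and (H) holds. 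Assertion 2 is then largely formal: \eqref{myast} combined with (\ref{5147}) gives $\phi\circ\varphi=\mathrm{id}_D$, the preceding step gives both $\varphi\circ\phi=\mathrm{id}_{(0,\infty)^d}$ and $\phi((0,\infty)^d)\subset D$, so $\phi\colon(0,\infty)^d\to D$ is a bijection with inverse $\varphi|_D$; the differentiability of $\phi$ (Proposition~\ref{8427}(3)) and of $\varphi$ together with the chain rule $J_\phi(\varphi(\nu))\,J_\varphi(\nu)=I_d$ promote it to a diffeomorphism. The principal obstacle throughout is the rigorous treatment of the multiparameter optional stopping at $\mathbf{T}_{\rm r}\wedge\mathbf{n}$ and the $n\to\infty$ residual control on $\{\mathbf{T}_{\rm r}\not\leq\mathbf{n}\}$; once \eqref{myast} is secured the rest is essentially algebraic.
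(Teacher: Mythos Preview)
Your strategy of proving everything through the single Wald-type identity \eqref{myast} is appealing, and for the implication $(H)\Rightarrow\p(\mathbf{T}_{\rm r}\in\mathbb{R}_+^d)>0$ it is in principle workable and genuinely different from the paper's route. The paper does \emph{not} establish \eqref{myast} directly; instead it passes to the Esscher transform $\mathbf{X}^\mu$ for $\mu\in D$, shows via convexity and a Perron--Frobenius argument that the mean matrix $J_\varphi(\mu)$ has negative dominant eigenvalue, and then uses the law of large numbers along the corresponding eigendirection to conclude that $\mathbf{X}^\mu$ hits every negative level almost surely. Your martingale approach is more direct, but your residual control on $\{\mathbf{T}_{\rm r}\le\mathbf{n}\}^c$ is not complete as written: the Jensen/SLLN argument yields only $\varphi_j(\nu)+\langle\nu,\e[{\rm X}^{(j)}_1]\rangle\ge 0$ (with possible equality), requires $\e[{\rm X}^{(j)}_1]$ finite, and in any case gives almost-sure decay of a single factor $M^{(j)}_n$, which does not by itself control the $L^1$ norm of the full product $M^\nu_{\mathbf{T}_{\rm r}\wedge\mathbf{n}}$ on that event, since the other factors are not bounded. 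This gap is likely repairable (for instance via part~4 of Lemma~\ref{1377} and the running-infimum structure of each $X^{j,j}$), but it needs a real argument.

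The more serious problem is the converse direction. You set $\nu:=\phi(\lambda)$ and propose to ``rerun the stopping argument'' to obtain \eqref{myast} for this $\nu$, and then read off $\phi(\varphi(\nu))=\nu$. But the very step you flag---the limit $n\to\infty$---cannot be carried out without knowing $\varphi_j(\nu)>0$ for all $j$, i.e.\ without already knowing $\nu\in D$. On $\{T_{\rm r}^{(j)}>n\}$ the suppressing factor $e^{-n\varphi_j(\nu)}$ does nothing if $\varphi_j(\nu)\le 0$, and no alternative control is offered. Nor can you invoke (\ref{5147}) with argument $\varphi(\nu)$ to compare, since (\ref{5147}) is only stated for nonnegative arguments and you have not yet established $\varphi(\nu)\ge 0$. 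So the argument is circular: you need $\nu\in D$ to prove \eqref{myast}, and you need \eqref{myast} to prove $\nu\in D$. The paper avoids this circle entirely: it computes $\e[e^{-\langle\lambda,\mathbf{T}_{\rm r}\rangle}\ind_{\{{\rm t}<\mathbf{T}_{\rm r}\}}]$ by conditioning on $\mathbf{X}_{\rm t}$ and the Markov property, obtains an identity involving $e^{\langle\varphi(\phi(\lambda)),{\rm t}\rangle}$, and then lets the coordinates of ${\rm r}$ tend to infinity (using the additive decomposition $\mathbf{T}_{\rm r}\stackrel{(d)}{=}\mathbf{T}_{{\rm r}'}+\tilde{\mathbf{T}}_{{\rm r}''}$) to isolate $\varphi(\phi(\lambda))=\lambda$. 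That argument nowhere assumes $\phi(\lambda)\in D$; it deduces it.
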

\begin{proof}
Assume that $(H)$ holds, let $\mu\in D$ and let us consider the spaLf $\mathbf{X}^\mu$ whose law is defined 
in (\ref{3790}). In the present case, $\mu$ also denotes the matrix whose each column is equal to $\mu$. Then as already 
observed, $\mu\in(0,\infty)^d$, so that all the random variables $X^{\mu,i,j}_1$ are integrable and the mean matrix of 
$\mathbf{X}^\mu$ is given by 
\[\e[X^{\mu,i,j}_1]=-\frac{\partial}{\partial\lambda_i}\varphi_j(\mu)\,,\;\;\;i,j\in[d]\,.\]
It is actually the transpose of the opposite of the Jacobian matrix of $\varphi$ denoted by $J_\varphi(\mu)$ and defined in 
(\ref{8256}). Note that $J_\varphi(\mu)$ is an essentially nonnegative matrix so that from Lemma A.2 in \cite{bp}, there is a real 
eigenvalue $\rho^\mu$ such that $\mbox{Re}(\rho)<\rho^\mu$ for all the other eigenvalues $\rho$. Moreover, since $\varphi_j$ 
is a differentiable convex function and $\varphi_j(0)=0$, one has 
\[\sum_{i=1}^d \frac{\partial}{\partial\lambda_i}\varphi_j(\mu)\mu_i\ge \varphi_j(\mu)>0\,,\]
so that from Theorem 3 of \cite{ar}, $J_\varphi(\mu)^{T}$, and therefore $J_\varphi(\mu)$, is a stable matrix in the sense of 
\cite{ar}. In particular, $\rho^\mu<0$. 

Let us first assume that $J_\varphi(\mu)$ is irreducible. Then from Lemma A.3 in \cite{bp}, we can choose an 
eigenvector ${\rm v}^\mu$ associated to $\rho^\mu$ such that $v^\mu_i>0$, for all $i\in[d]$. From the law of large numbers 
of L\'evy processes, we obtain
\[\lim_{t\rightarrow+\infty}t^{-1}\mathbf{X}^{\mu}_{t{\rm v}^\mu}=\rho^\mu {\rm v}^\mu\;\;\;a.s.\]
Therefore, from part 3.~of Lemma \ref{1377}, $\{\mathbf{X}^{\mu}_{\rm t},\,{\rm t}\in\mathbb{R}_+^d\}$ reaches each level 
$\alpha {\rm v}^\mu$, with $\alpha<0$, almost surely. Then from the definition (\ref{3790}) of the law of $\mathbf{X}^{\mu}$, 
the field $\{\mathbf{X}_{\rm t},\,{\rm t}\in\mathbb{R}_+^d\}$ reaches each level $\alpha {\rm v}^\mu$, $\alpha<0$,  with positive 
probability and since $v^\mu_i>0$, $i\in[d]$, from part 2.~of Lemma \ref{1377}, it reaches each level $-{\rm r}\in\mathbb{R}_-^d$ 
with positive probability.

Now let us assume that $J_\varphi(\mu)$ is not irreducible that is there exists a permutation matrix $P_{\sigma}$ and three 
matrices $A_{1}, A_{2}$ and $B$ such that $A_{1}$ is of size $1\leq p\leq d-1$ and
\[P_{\sigma}^{-1} J_{\varphi(\mu)} P_{\sigma} 
	=\left(\begin{matrix}
	A_{1} & 0
	\\ B & A_{2}
	\end{matrix}\right).\]
In particular, for all $(i,j)\in I\times J$ where $I=\{\sigma(1),...,\sigma(p)\}$ and $J=\{\sigma(p+1),...,\sigma(d)\}$, 
\[\mathbb{E}[X^{\mu,i,j}_{1}]=0 \; \text{ that is } \; X^{\mu,i,j}_{1}=0 \; \text{ a.s.} \] 
Therefore we can write for all ${\rm r}\in\mathbb{R}^{d}_{+}$,
\begin{eqnarray*}
\p(\mathbf{T}^{\mu}_{\rm r}\in\mathbb{R}^{d}_{+})&=&\p\left(\exists {\rm t}\in\mathbb{R}^{d}_{+}: \forall i \in[d], 
\sum\limits_{j=1}^{d} X^{\mu,i,j}(t_{j})=-r_{i}\right)\\
&=&\p\left(\exists {\rm t}\in\mathbb{R}^{d}_{+}: \forall i\in I, \sum\limits_{j\in I} X^{\mu,i,j}(t_{j})=-r_{i} \right.\\
&&\left.\text{ and } \forall i\in J, \sum\limits_{j\in J} X^{\mu,i,j}(t_{j})=-\left(r_{i}+\sum\limits_{j\in I} X^{\mu,i,j}(t_{j})\right)\right).\\
\end{eqnarray*}
Let $\mathbf{T}_{\rm r}^{\mu,I}$ be the smallest solution of the system $({\rm r}_{I},\mathbb{X}^{I,\mu})$, where we set 
${\rm r}_{I}=(r_{i})_{i\in I}$ and $\mathbb{X}^{I,\mu}=(X^{\mu,i,j})_{i,j\in I}$. Then conditioning on the event 
$\{\mathbf{T}_{\rm r}^{\mu,I}\in\mathbb{R}^{p}_{+}\}$, we obtain
\[\p(\mathbf{T}^{\mu}_{\rm r}\in\mathbb{R}^{d}_{+})
=\p(\mathbf{T}^{\mu,J}_{{\rm r}'}\in\mathbb{R}^{d-p}_{+}|\mathbf{T}_{\rm r}^{\mu,I}\in\mathbb{R}^{p}_{+})
\mathbb{P}(\mathbf{T}^{\mu,I}_{\rm r}\in\mathbb{R}^{p}_{+})\,,\]
where we have set ${\rm r}'= \left(r_{i}+\sum\limits_{j\in I} X^{\mu,i,j}(T^{\mu,I,j}_{\rm r})\right)_{i\in J}$. Then 
$\mathbf{T}_{{\rm r}'}^{\mu,J}$ is the smallest solution of the system $({\rm r}',\mathbb{X}^{J,\mu})$ with 
$\mathbb{X}^{J,\mu}=(X^{\mu,i,j})_{i,j\in J}$. Thus if $A_{1}$ and 
$A_{2}$ are irreducible, then we derive from the previous case that $\mathbb{P}(\mathbf{T}_{\rm r}^{\mu,I}\in\mathbb{R}^{p}_{+})=1$ 
and $\mathbb{P}(\mathbf{T}_{{\rm r}'}^{\mu,J}\in\mathbb{R}^{d-p}_{+}|\mathbf{T}_{\rm r}^{\mu,I}\in\mathbb{R}^{p}_{+})=1$. In other 
words, we have $\mathbb{P}(\mathbf{T}_{\rm r}^{\mu}\in\mathbb{R}^{d}_{+})=1$ and then 
$\mathbb{P}(\mathbf{T}_{\rm r}\in\mathbb{R}^{d}_{+})>0$. On the other hand, if $A_{1}$ and/or $A_{2}$ are not irreducible, then
we can repeat this argument.\\

Conversely, let us assume that  $\mathbf{T}_{\rm r}\in{\mathbb{R}}_+^d$ holds with positive probability for all 
${\rm r}\in\mathbb{R}_+^d$ and let $\phi$ be the function defined in part 3 of Proposition \ref{8427}. Let us show that for 
all $\lambda\in(0,\infty)^d$, $\varphi(\phi(\lambda))=\lambda$, which implies in particular that $\phi(\lambda)\in D$. 
It follows from the independence and stationarity of the increments of the spaLf $\{{\bf X}_{\rm t},\,{\rm t}\in\mathbb{R}_+^d\}$ 
that for all ${\rm r},{\rm t},\lambda\in\mathbb{R}_+^d$,
\begin{eqnarray*}
\e[e^{-\langle\lambda, {\bf T}_{\rm r}\rangle}\ind_{\{{\rm t}<{\bf T}_{\rm r}\}}]&=&\int_{C_{\rm r}}
\e[e^{-\langle\lambda, {\bf T}_{\rm r}\rangle}\ind_{\{{\rm t}<{\bf T}_{\rm r}\}}\,|\,{\bf X}_{\rm t}={\rm x}]\p({\bf X}_{\rm t}\in d{\rm x})\\
&=&\int_{C_{\rm r}} e^{-\langle\lambda, {\rm t}\rangle}
\e[e^{-\langle\lambda, {\bf T}_{\rm r+x}\rangle}]\p({\bf X}_{\rm t}\in d{\rm x})\\
&=&e^{-\langle\lambda, {\rm t}\rangle}e^{-\langle {\rm r},\phi(\lambda)\rangle}
\left[e^{\langle\varphi(\phi(\lambda)),{\rm t}\rangle}-\int_{-\infty}^{-r_1}\dots \int_{-\infty}^{-r_d}
e^{-\langle {\rm x},\phi(\lambda)\rangle}\p({\bf X}_{\rm t}\in d{\rm x})\right],
\end{eqnarray*}
where $C_{\rm r}$ is the union of all the sets $E_{1}\times\dots\times E_{d}$ with at least one $i\in[d]$ such that 
$E_{i}=]-r_{i},+\infty[$ and for the others $j\in[d]$, $E_{j}=\mathbb{R}$. Then we derive the identity
\begin{equation}\label{2511}
1-e^{\langle {\rm r},\phi(\lambda)\rangle}\e[e^{-\langle\lambda, {\bf T}_{\rm r}\rangle}\ind_{\{{\rm t}<{\bf T}_{\rm r}\}^c}]=
e^{-\langle\lambda, {\rm t}\rangle}
\left[e^{\langle\varphi(\phi(\lambda)),{\rm t}\rangle}-\int_{-\infty}^{-r_1}\dots \int_{-\infty}^{-r_d}
e^{-\langle {\rm x},\phi(\lambda)\rangle}\p({\bf X}_{\rm t}\in d{\rm x})\right].
\end{equation}
Let ${\rm r}',{\rm r}''\in(0,\infty)^d$ be such that ${\rm r}'+{\rm r}''={\rm r}$, then from Proposition \ref{8427}, ${\bf T}_{\rm r}$ can 
be decomposed as ${\bf T}_{\rm r}={\bf T}_{{\rm r}'}+\tilde{{\bf T}}_{{\rm r}''}$, where $\tilde{{\bf T}}_{{\rm r}''}$ is an independent 
copy of ${\bf T}_{{\rm r}''}$. Moreover $\{{\rm t}<{\bf T}_{\rm r}\}^c\subset \{{\rm t}<{\bf T}_{{\rm r}'}\}^c\cap\{{\rm t}<
\tilde{{\bf T}}_{{\rm r}''}\}^c$, so that 
\[\e[e^{-\langle\lambda, {\bf T}_{\rm r}\rangle}\ind_{\{{\rm t}<{\bf T}_{\rm r}\}^c}]\le \e[e^{-\langle\lambda, {\bf T}_{{\rm r}'}\rangle}
\ind_{\{{\rm t}<{\bf T}_{{\rm r}'}\}^c}]\e[e^{-\langle\lambda,{\bf T}_{{\rm r}''}\rangle}\ind_{\{{\rm t}<{\bf T}_{{\rm r}''}\}^c}].\]
If the coordinates of ${\rm r}$ are integers, then applying this identity recursively, we obtain,
\begin{equation}\label{3733}\e[e^{-\langle\lambda, {\bf T}_{\rm r}\rangle}\ind_{\{{\rm t}<{\bf T}_{\rm r}\}^c}]\le\prod_{j=1}^d
\e[e^{-\langle\lambda,{\bf T}_{e_j}\rangle}\ind_{\{{\rm t}<{\bf T}_{e_j}\}^c}]^{r_j}\,.
\end{equation}
Then we can find ${\rm t}$ whose coordinates are sufficiently small so that for all $j$, 
\[\e[e^{-\langle\lambda,{\bf T}_{e_j}\rangle}\ind_{\{{\rm t}<{\bf T}_{e_j}\}^c}]<\e[e^{-\langle\lambda,{\bf T}_{e_j}\rangle}]=
e^{-\phi_j(\lambda)}.\] 
Therefore $\lim\limits_{{\rm r}\rightarrow\infty}e^{\langle {\rm r},\phi(\lambda)\rangle}
\prod_{j=1}^d\e[e^{-\langle\lambda,{\bf T}_{e_j}\rangle}\ind_{\{{\rm t}<{\bf T}_{e_j}\}^c}]^{r_j}=0$ and from (\ref{3733}) we derive 
that the left member of (\ref{2511}) tends to 1, while the right member tends to $e^{-\langle\lambda, {\rm t}\rangle}
e^{\langle\varphi(\phi(\lambda)),{\rm t}\rangle}$, which shows that $\varphi(\phi(\lambda))=\lambda$.
This is true in particular for all $\lambda\in(0,\infty)^d$ and hence $D$ is not empty. This achieves the proof of both assertions 
$1.$ and $2$.
\end{proof}
\noindent From part 1.~of Theorem \ref{4526}, assuming $(H)$ for a spaLf ${\bf X}$ ensures that ${\bf X}$ hits all negative levels 
in a finite time
with positive probability. When $d=1$, this is simply assuming that the spectrally positive L\'evy process we consider is not a 
subordinator.\\     

Let us give an example of a $2$-dimensional spaLf. Assume that, for $j\in [2]$, the $X^{j,j}$'s are independent Brownian motions 
$B^{(j)}$ with drifts $a_j\in\mathbb{R}$, that is $X^{j,j}_t=B^{(j)}_t+a_jt$ and that for $i\neq j$, $X^{i,j}$ is a pure drift, that is 
$X^{i,j}_t\equiv a_{ij}t$, $a_{ij}\ge0$. The Laplace exponents $\varphi_j$ are then explicitly given by 
\[\varphi_j(\lambda)=-\lambda_ia_{ij}-\lambda_ja_j+\frac12q_j\lambda_j^2,\;\;\; i\neq j,\lambda\in\mathbb{R}^{2}_{+}\,.\]
Assume $q_{j}>0$, $j\in[2]$. After some calculus, we are able to explicit the set $D$ defined in hypothesis $(H)$. It is given by 
\[D=\left\lbrace\lambda\in\mathbb{R}^{2}_{+} : 
\lambda_{1}>\left(\dfrac{a_{1}+\sqrt{\Delta_{1}(\lambda_{2})}}{q_{1}}\vee 0\right) \text{ and } 
\lambda_{2}>\left(\dfrac{a_{2}+\sqrt{\Delta_{2}(\lambda_{1})}}{q_{2}}\vee 0\right)\right\rbrace,\] 
where $\Delta_{j}(\lambda_{i})=
a_{j}^{2}+2a_{ij}q_{j}\lambda_{i}$ for all $j\in[2]$ and $i\neq j$. Note that this set is not empty and so the assumption $(H)$ holds. 
In particular, thanks to Theorem \ref{4526}, the spaLf $\textbf{X}$ reaches all the level $-{\rm r}\in \mathbb{R}^{2}_{-}$ with 
positive probability and according to the second part of this theorem, we know that the mapping $\varphi$ admits an inverse $\phi$ 
on the set $D$. This inverse $\phi=(\phi_{1},\phi_{2})$ is given by
\[\phi_{j}(\lambda)=\dfrac{1}{q_{j}}\sqrt{2q_{j}\lambda_{j} + a_{j}^{2}+2a_{ij}q_{j}\phi_{i}(\lambda)} +\dfrac{a_{j}}{q_{j}},\;\;\; j\in[2],
i\neq j,\lambda\in\mathbb{R}^{2}_{+}\,.\] \\

In order to carry on with the general study of the fluctuation of the spaLf $\mathbf{X}$, we shall now give a characterization of 
the condition $\phi(0)=0$ in terms of the Jacobian matrix $J_\varphi(0)$. As a first remark, 
note that if for some $j\in[d]$, $J_\varphi(0)_{j,j}>0$, then $\lim\limits_{t\rightarrow+\infty}X^{j,j}_t=+\infty$ a.s. and hence the field
$\{\mathbf{X}_{\rm t},\,{\rm t}\in\mathbb{R}_+^d\}$ cannot reach all the levels $-{\rm r}\in\mathbb{R}_-^d$ with probability one. 
Therefore $\phi(0) >0$ whenever there is $j$ such that $J_\varphi(0)_{j,j}>0$.\\

Recall that whenever the essentially nonnegative matrices $J_{\varphi}(\lambda)$, defined in $(\ref{8256})$ and $(\ref{8059})$
for $\lambda\in[0,\infty)^d$ have finite entries and are irreducible, according to the Perron-Frobenius theory, there 
are real eigenvalues $\rho^\lambda$ with multiplicity equal to 1 and such that the real part of any other eigenvalue is 
less than $\rho^\lambda$, see Appendix A of \cite{bp}. We set $\rho^0=\rho$.

\begin{theorem}\label{9503} Assume that $(H)$ holds and that $J_{\varphi}(0)$ is irreducible, then 
\begin{itemize}
\item[$1.$] the values $0$ and $\phi(0)$ are the only roots of the equation $\varphi(\lambda)=0$, $\lambda\in\mathbb{R}_+^d$. 
Furthermore, either $\phi(0)$ is equal to $0$ or it belongs to $(0,\infty)^{d}$.
\item[$2.$]
If $\e[X_1^{i,j}]=\infty$, for some $i,j\in[d]$, then $\phi(0)>0$. Assume that 
$\e[X_1^{i,j}]<\infty$, for all $i,j\in[d]$, then $\phi(0)=0$ if and only if $\rho\le0$.
\end{itemize}
\end{theorem}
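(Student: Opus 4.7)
The identity $\varphi(\phi(0))=0$ follows by passing to the limit $\lambda\to 0^{+}$ in the identity $\varphi(\phi(\lambda))=\lambda$ of Theorem~\ref{4526}, using continuity of $\phi$ from Proposition~\ref{8427}. To show that any non-zero root $\mu\in\mathbb{R}_{+}^{d}$ of $\varphi=0$ lies in $(0,\infty)^{d}$, I argue by contradiction: suppose $\mu_{j}=0$ while $\mu_{i}>0$ for some $i$. Since $\varphi_{j}$ is non-increasing in every variable $\lambda_{i}$ with $i\neq j$ (the $X^{i,j}$ being subordinators) and $\varphi_{j}(0)=\varphi_{j}(\mu)=0$, the map $t\mapsto\varphi_{j}(t\mu)$ is non-increasing on $[0,1]$ with equal endpoints, hence constantly zero; differentiating at $t=0$ yields $\sum_{i\neq j}\mu_{i}\e[X^{i,j}_{1}]=0$, and the non-negativity of the summands forces $\e[X^{i,j}_{1}]=0$ whenever $\mu_{i}>0$. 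This creates a zero off-diagonal block in $J_{\varphi}(0)$ contradicting its irreducibility. Applied to $\mu=\phi(0)$, this gives $\phi(0)\in\{0\}\cup(0,\infty)^{d}$.

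For uniqueness of the non-zero root, let $\mu\in(0,\infty)^{d}$ be a root. The convexity inequality $\sum_{i}\mu_{i}\partial_{i}\varphi_{j}(\mu)\geq\varphi_{j}(\mu)$ (already used in the proof of Theorem~\ref{4526}), together with $\varphi_{j}(\mu)=0$, gives $J_{\varphi}(\mu)^{T}\mu\leq 0$, and pairing with a positive right eigenvector of $J_{\varphi}(\mu)$ yields the Perron--Frobenius eigenvalue $\rho^{\mu}\leq 0$. If $\rho^{\mu}=0$, equality in the convexity inequality forces $\varphi_{j}(t\mu)\equiv 0$ on $[0,1]$, equivalently $\langle\mu,{\rm X}^{(j)}_{s}\rangle=0$ a.s.\ for every $s\geq 0$; since $\mu>0$ and the jumps $\Delta{\rm X}^{(j)}_{s}$ all lie in $\mathbb{R}_{+}^{d}$, this forces ${\rm X}^{(j)}\equiv 0$, contradicting $(H)$. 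Hence $\rho^{\mu}<0$, $J_{\varphi}(\mu)$ is invertible, and the inverse function theorem makes $\varphi$ a local diffeomorphism near $\mu$; since its local inverse coincides with $\phi$ on $D$, continuity of $\phi$ at $0$ forces $\mu=\phi(0)$.

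\textbf{Part 2.} Assume first that all means $\e[X^{i,j}_{1}]$ are finite. Perron--Frobenius applied to the essentially nonnegative irreducible matrix $J_{\varphi}(0)$ produces a right eigenvector $v>0$ with $J_{\varphi}(0)v=\rho v$. For the subcritical case $\rho<0$, the law of large numbers for L\'evy processes yields $t^{-1}\mathbf{X}_{tv}\to\rho v<0$ a.s., so for each ${\rm r}$ one finds $t$ with $\mathbf{X}_{tv}\leq-{\rm r}$ componentwise, and part~3 of Lemma~\ref{1377} gives $\mathbf{T}_{\rm r}\leq tv<\infty$ a.s., whence $\phi(0)=0$ by (\ref{3789}). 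The critical case $\rho=0$ I would treat by perturbation: replacing $X^{j,j}_{t}$ by $X^{j,j}_{t}-\epsilon t$ shifts $J_{\varphi}(0)$ by $-\epsilon I$ and hence $\rho$ to $-\epsilon<0$, so the subcritical case applied to the perturbed spaLf gives $\mathbf{T}^{\epsilon}_{\rm r}<\infty$ a.s., and one concludes by passing $\epsilon\to 0$ via the monotonicity of smallest solutions from part~2 of Lemma~\ref{1377}. For the converse $\rho>0\Rightarrow\phi(0)>0$, take a positive left Perron eigenvector $w>0$ and consider
\[
Z_{\rm s}:=\langle w,\mathbf{X}_{\rm s}\rangle=\sum_{j=1}^{d}Y^{(j)}_{s_{j}},\qquad Y^{(j)}_{t}:=\sum_{i=1}^{d}w_{i}X^{i,j}_{t},
\]
where each $Y^{(j)}$ is a $1$D spectrally positive L\'evy process with positive mean $\rho w_{j}$. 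By independence of the $Y^{(j)}$'s, $\inf_{{\rm s}\in\mathbb{R}_{+}^{d}}Z_{\rm s}=\sum_{j=1}^{d}\inf_{s_{j}\geq 0}Y^{(j)}_{s_{j}}$ is an a.s.\ finite random variable, and since $\mathbf{T}_{\rm r}<\infty$ entails $Z_{\mathbf{T}_{\rm r}}=-\langle w,{\rm r}\rangle$, we obtain $\p(\mathbf{T}_{\rm r}<\infty)<1$ for $|{\rm r}|$ large, so $\phi(0)>0$.

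\textbf{Infinite-mean case and main obstacle.} If $\e[X^{j,j}_{1}]=\infty$ then $X^{j,j}_{t}/t\to+\infty$ a.s., so $\inf_{t}X^{j,j}_{t}$ is a.s.\ finite; combined with the non-negativity of the off-diagonal contributions to the $j$-th coordinate of $\mathbf{X}$, this forces $\p(\mathbf{T}_{\rm r}<\infty)<1$ for $r_{j}$ large, hence $\phi(0)>0$. The case $\e[X^{i,j}_{1}]=\infty$ with $i\neq j$ is treated by an analogous projection argument on the $i$-th coordinate. I expect the critical case $\rho=0$ of the forward implication in Part~2 to be the main difficulty: the LLN gives only zero drift along $v$, so one must rigorously justify either the perturbation passage to the limit sketched above or replace it by an intrinsic fluctuation-theoretic argument exploiting that each $1$D marginal $(\mathbf{X}_{tv})_{i}$ is mean-zero spectrally positive with $\liminf=-\infty$ a.s., then combined with Lemma~\ref{1377} to produce a finite multivariate hitting time.
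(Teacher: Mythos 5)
Your overall architecture tracks the paper's: use the inverse relation $\varphi(\phi(\lambda))=\lambda$ from Theorem \ref{4526}, convexity of the $\varphi_j$, Perron--Frobenius for the Metzler matrix $J_\varphi(0)$, and the law of large numbers along the right Perron eigenvector for $\rho<0$. Where you genuinely diverge: for uniqueness of the non-zero root in Part 1 you use a Jacobian/inverse-function-theorem argument centred on $\rho^\mu\le 0$, whereas the paper argues directly by convexity along the ray through $\mu$ and deduces $\mu\in\overline{D}$, which already carries uniqueness. For the converse $\rho>0\Rightarrow\phi(0)>0$ you project onto a positive left Perron eigenvector $w$ and show $\inf_{\rm s}\langle w,\mathbf{X}_{\rm s}\rangle=\sum_j\inf_t Y^{(j)}_t$ is a.s.\ finite because each $Y^{(j)}$ is spectrally positive with mean $\rho w_j>0$; that is a clean, self-contained alternative to the paper's proof by continuity of the Perron eigenvalue $\rho^\mu\to\rho\le 0$ as $\mu\downarrow 0$. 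Your observation that the original paper's claim ``for all $i\in[d]$, $X^{i,j}\equiv 0$'' only needs to hold for $i$ with $\mu_i>0$, together with repeating the argument over all $j$ with $\mu_j=0$ to produce a genuine zero block $I\times J$ in $J_\varphi(0)$, is a correct refinement.

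There are, however, three concrete gaps. (i) In the sub-case $\rho^\mu=0$ of Part 1, the step ``$\langle\mu,{\rm X}^{(j)}_s\rangle=0$ a.s.\ $\Rightarrow$ ${\rm X}^{(j)}\equiv 0$'' is false. What the jump structure gives you is that ${\rm X}^{(j)}$ has no jumps and no Brownian part in the $j$-th entry, hence is a deterministic linear drift $s\mapsto s\,{\rm a}^{(j)}$ with $\langle\mu,{\rm a}^{(j)}\rangle=0$; this is not the zero process. The contradiction with $(H)$ still holds, but for a different reason: all ${\rm X}^{(j)}$ being drifts with $J_\varphi(0)^{T}\mu=0$, $\mu>0$, and $J_\varphi(0)$ irreducible forces $\rho=0$, and then pairing $\varphi(\lambda)=-J_\varphi(0)^{T}\lambda<0$ against a positive \emph{right} Perron eigenvector shows $D=\emptyset$. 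As written, your argument does not close. (ii) The perturbation argument for $\rho=0$ has the monotonicity running the wrong way, as you yourself flag: shifting $X^{j,j}$ down by $\epsilon t$ makes $\mathbf{T}^{\epsilon}_{\rm r}\le\mathbf{T}_{\rm r}$, so finiteness of $\mathbf{T}^{\epsilon}_{\rm r}$ a.s.\ for each $\epsilon>0$ does not pass to $\epsilon\to 0$. The paper's argument avoids this: if $\phi(0)>0$, convexity and $\varphi(\phi(0))=0$ give $\lambda$ with $\varphi_j(\lambda)\le 0$ for all $j$, and then the one-dimensional Laplace exponent $\alpha\mapsto\langle{\rm u},\varphi(\alpha\lambda)\rangle$ is a convex function vanishing at $0$ with zero derivative (since $J_\varphi(0){\rm u}=0$), hence non-negative, contradicting $\langle{\rm u},\varphi(\lambda)\rangle<0$ unless one carefully rules out the degenerate equality case. (iii) Your treatment of the infinite-mean case is correct only when $i=j$. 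For $i\neq j$ the ``analogous projection on the $i$-th coordinate'' fails outright: $\inf_{\rm t}\mathbf{X}^{(i)}_{\rm t}=\inf_{t_i}X^{i,i}_{t_i}$ (achieved by taking $t_k=0$ for $k\neq i$), so the infinite-mean subordinator $X^{i,j}$ plays no role in bounding this infimum. The paper instead works through $\varphi_j$: since $\langle\lambda,{\rm X}^{(j)}\rangle$ has infinite mean and hence Laplace exponent $\alpha\mapsto\varphi_j(\alpha\lambda)<0$ for small $\alpha>0$, the point $0$ cannot be in $\overline{D}$, so $\phi(0)>0$. You would need an argument of that flavour for the off-diagonal case.
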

\begin{proof} Let us assume that $J_{\varphi}(0)$ is irreducible. Since $\varphi:D\rightarrow (0,\infty)^{d}$ is the inverse 
of $\phi:(0,\infty)^{d}\rightarrow D$, $\phi(0)$ is the only solution of the equation $\varphi(\lambda)=0$ on $\overline{D}$. 
Indeed, let $\mu\in\overline{D}$ such that $\varphi(\mu)=0$ and $\mu_n\in D$ such that 
$\lim\limits_{n\rightarrow +\infty}\mu_n=\mu$. Then by continuity, $\lim\limits_{n\rightarrow +\infty}\varphi(\mu_n)=0$ and 
$\Phi(0)=\lim\limits_{n\rightarrow +\infty}\phi(\varphi(\mu_n))=\lim\limits_{n\rightarrow +\infty}\mu_n$, so that $\mu=\Phi(0)$.

Now let $\mu\in\mathbb{R}^{d}_{+}\setminus\{0,\phi(0)\}$ be a solution of the equation $\varphi(\lambda)=0$ and ${\rm u}=
\dfrac{\mu}{||\mu||}$. Then we consider, for all $j\in[d]$, the function $f_{j}:a\in\mathbb{R}\mapsto\varphi_{j}(\mu+a{\rm u})$. 
Let us first note that since $\varphi_{j}$ is convex, so is $f_{j}$. 
Furthermore, for all $j\in[d]$, we have $f_{j}(0)=\varphi_{j}(\mu)=0=\varphi_{j}(0)=f_{j}(-||\mu||)$. On the one hand, if there exists 
$j\in[d]$ such that $\mu_{j}=0$, then for all $a\in\mathbb{R}$, $\mu_{j}+au_{j}=0$ that is $f_{j}(a)=\varphi_{j}(\mu+a{\rm u})\leq 0$. 
Since $0$ and $-||\mu||<0$ are zeros of the real convex function $f_{j}$, it implies that $f_{j}$ is constant equal to $0$. 
In other words, for all $t\geq 0$, 
\[\mathbb{E}\left[e^{-\sum\limits_{i\neq j} (\mu_{i}+au_{i})X^{i,j}_{t}}\right]=e^{t\varphi_{j}(\mu +a{\rm u})}=1\]
and then for all $i\in[d]$, $X^{i,j}\equiv0$ a.s. that is $J_{\varphi}(0)$ is reducible. Since we assumed $J_{\varphi}(0)$ irreducible, 
we necessarily have $\mu_{j}>0$, $j\in[d]$ and then, by convexity, $f_{j}$ is negative on $(-||\mu||,0)$ and positive on $(0,+\infty)$. 
In other words, for all integers $j\in[d]$ and for all $\epsilon>0$, $\varphi_{j}(\mu+\epsilon {\rm u})>0$ that is $\mu\in\overline{D}$ 
which is a contradiction. As a consequence, when $J_{\varphi}(0)$ is irreducible, there is at most two solutions of the equation 
$\varphi(\lambda)=0$, $\lambda\in\mathbb{R}^{d}_{+}$ which are $0$ and $\phi(0)\in \overline{D}$. Furthermore, when
$J_{\varphi}(0)$ is irreducible, we have seen that $\phi(0)=0$ or $\phi(0)\in(0,\infty)^{d}$.\\

Let us now prove assertion 2. Suppose that $\e[X_1^{i,j}]=\infty$, for some $i,j\in[d]$. Then for all $\lambda\in(0,\infty)^d$ small 
enough, $\varphi_j(\lambda)<0$. Indeed, let $\lambda\in(0,\infty)^d$. Since the spectrally positive L\'evy process 
$\langle \lambda,X_t^{(j)}\rangle$ drifts to $\infty$, for all $\alpha\in(0,\infty)$ small enough, its characteristic exponent 
evaluated at $\alpha$ is negative, that is $\varphi_j(\alpha\cdot\lambda)<0$. But if $\phi(0)=0$, since $0\in\overline{D}$,  
there is $\lambda\in(0,\infty)^d$ small enough such that $\varphi_j(\lambda)>0$. Therefore, $\phi(0)>0$.

Suppose now that $\e[X_1^{i,j}]<\infty$, for all $i,j\in[d]$ and that $\rho<0$. Let ${\rm u}=(u_1,\dots,u_d)$ be the unique right 
eigenvector corresponding to $\rho$ such that $u_i>0$ for all $i\in[d]$, and $u_1+\dots+u_d=1$, see Lemma A.2 in \cite{bp}. 
Then from the law of large numbers, 
\[\lim\limits_{t\rightarrow+\infty}t^{-1}\mathbf{X}_{t{\rm u}}=\rho {\rm u}\,,\;\;\;a.s.\]
Therefore, $\{\mathbf{X}_{\rm t},\,{\rm t}\in\mathbb{R}_+^d\}$ reaches a.s. all the levels $\alpha {\rm u}$, $\alpha<0$ and from 
Lemma \ref{8427} it reaches all the levels $-{\rm r}\in\mathbb{R}_-^d$ a.s. We conclude from (\ref{3789}) that $\phi(0)=0$.

Assume that $\rho=0$.  Let ${\rm u}=(u_1,\dots,u_d)$ be a right eigenvector corresponding to $\rho$, then from the law of large 
numbers, 
\[\lim\limits_{t\rightarrow+\infty}t^{-1}\mathbf{X}_{t{\rm u}}=0 \,,\;\;\;a.s.\]
Therefore, for all $i\in[d]$, the process $Y^{i}=(Y^{i}_{t})_{t\geq 0}$, defined for all $t\geq 0$, by $Y^{i}_{t}=\sum\limits_{j=1}^{d} 
X^{i,j}_{tu_{j}}$ is a real L\'evy process such that 
\[\lim\limits_{t\rightarrow+\infty}t^{-1}Y^{i}_{t}=0\,,\;\;\;a.s.\]
that is, for all $i\in[d]$, $Y^{i}$ oscillates. On the other hand, if $\phi(0)>0$, then,  by convexity of the $\varphi_{j}$'s, there exists 
$\lambda\in\mathbb{R}^{d}_{+}$ such that $\varphi_{j}(\lambda)<0$, for all $j\in[d]$. Consequently, for all direction 
${\rm v}\in\mathbb{R}^{d}_+$, we have 
\[ \mathbb{E}[e^{-\langle\lambda, \mathbf{X}_{t{\rm v}}\rangle}]=e^{\langle t{\rm v},\varphi(\lambda)\rangle}
\underset{t\rightarrow +\infty}{\rightarrow} 0\,.\]
It implies that for all direction ${\rm v}\in\mathbb{R}^{d}_{+}$, the L\'evy process 
$\langle \lambda,\mathbf{X}_{t{\rm v}}\rangle$  tends to $\infty$ in probability (and hence almost surely), as $t\rightarrow\infty$. 
In particular, for ${\rm v}={\rm u}$, there exists $i\in[d]$ such that $Y_{t}^{i}$ tends to $\infty$ almost surely, as $t\rightarrow\infty$, 
which is a contradiction. In conclusion, $\phi(0)=0$.\\

Conversely, assume that $\phi(0)=0$ then $0\in\overline{D}$ and by convexity, there exists $\mu\in(0,+\infty)^{d}$, small enough,
such that $\varphi_{i}(\mu)>0$, for all $i\in[d]$. Recall from  (\ref{3789}) and (\ref{3790}) the definition of the Esscher transform 
$\mathbf{X}^{\mu}$ of the spaLf $\mathbf{X}$, with $\mu^{(1)}=\dots=\mu^{(d)}=\mu$. We have seen in the proof of Theorem 
3.1 that the Perron-Frobenius eigenvalue of $J_{\varphi}(\mu)$ satisfies $\rho^{\mu}<0$. Since the $\varphi_{j}$'s are 
$\mathcal{C}^{\infty}$-functions, for all $i,j\in[d], \dfrac{\partial}{\partial \lambda_{i}}\varphi_{j}$ are continuous and hence 
$\lim\limits_{\mu\rightarrow 0} J_{\varphi}(\mu)=J_{\varphi}(0)$. Furthermore, the eigenvalues of the matrix $J_{\varphi}(\mu)$ 
depend continuously of its entries because they are the roots of its characteristic polynomial whose coefficients are polynomial 
functions of the entries of the matrix. Then since $\rho^{\mu}=\max\limits_{i\in[d]}\mbox{Re}(\lambda_{i}^{\mu})$ and 
$\rho=\max\limits_{i\in[d]}\mbox{Re}(\lambda_{i})$ where $\lambda_{i}^{\mu}$ and $\lambda_{i}$ are respectively the eigenvalues 
of $J_{\varphi}(\mu)$ and $J_{\varphi}(0)$, we have that $\lim\limits_{\mu\rightarrow 0} \rho^{\mu}=\rho\leq 0$.
\end{proof}

\noindent Assuming $(H)$, we will say that the additive  L\'evy field $(\mathbf{X}_{\rm t},{\rm t}\in\mathbb{R}_+^d)$ 
drifts to $-\infty$, oscillates or drifts to $+\infty$ according as $\rho<0$, $\rho=0$ or $\rho>0$. \\

Let us go back to our example. We already have the explicit form of $\varphi$, the set $D$ and the inverse $\phi$. 
Now we want to find the solution of the equation 
$\varphi(\lambda)=0$, $\lambda\in\mathbb{R}^{2}_{+}$. Assume $J_{\varphi}(0)$ is irreducible that is $a_{ij}>0$ for all $i\neq j$.
Then the solutions of the equation $\varphi(\lambda)=0$, $\lambda\in\mathbb{R}^{2}_{+}$ are $(0,0)$ and points of the form 
$\left(\dfrac{a_{1}+\sqrt{\Delta_{1}(\lambda_{2})}}{q_{1}},\dfrac{a_{2}+\sqrt{\Delta_{2}(\lambda_{1})}}{q_{2}}\right)$ where 
$\Delta_{j}(\lambda_{i})=a_{j}^{2}+2a_{ij}q_{j}\lambda_{i}$, $j\in[2],i\neq j$. It is easy to check that there is only one solution of 
the second kind and it is in $(0,+\infty)^{2}$ or equal to $0$. According to the expression of $\phi$, $\phi(0)$ is this solution. 
We can show that $\phi(0)=0$ if and only if $a_{1}<0$, $a_{2}<0$ and $a_{1}a_{2}\geq a_{1,2}a_{2,1}$. Furthermore, we can 
compute the Perron-Frobenius eigenvalue $\rho$ of the Jacobian $J_{\varphi}(0)$. It has the form
\[\rho = \dfrac{a_{1}+a_{2}+\sqrt{(a_{1}-a_{2})^{2} + 4a_{1,2}a_{2,1}}}{2}\,.\ \]
Then it is easy to see that $\rho\leq 0$ if and only if $a_{1}<0$, $a_{2}<0$ and $a_{1}a_{2}\geq a_{1,2}a_{2,1}$. 
In conclusion, we find again $\phi(0)=0\Leftrightarrow\rho\leq 0$.

Note that if $J_{\varphi}(0)$ is reducible then at least one of the $a_{i,j}$ is equal to zero, for $i,j\in[d]$. 
Then $\varphi$ has at most four zeros (indeed some can be equals or negative) $(0,0)$, 
$\left(\dfrac{2a_{1}}{q_{1}},0\right)$, $\left(0,\dfrac{2a_{2}}{q_{2}}\right)$ and $\left(\dfrac{a_{1}+\sqrt{\Delta_{1}
(\frac{2a_{2}}{q_{2}})}}{q_{1}},\dfrac{a_{2}+\sqrt{\Delta_{2}
(\frac{2a_{1}}{q_{1}})}}{q_{2}}\right)=\phi(0)$.

\begin{remark}
We have proved in part $1.$~of Theorem $\ref{9503}$, that if there exists a solution to the equation $\varphi(\lambda)=0$ 
in $(0,+\infty)^{d}$ then it is unique and equal to $\phi(0)$ in both cases $J_{\varphi}(0)$ reducible or irreducible. Nevertheless, 
in the reducible case, we can't say anything about the solutions $\mu\in\mathbb{R}^{d}\setminus\{0\}$ with $\mu_{j}=0$ for 
some $j\in[d]$. Furthermore, we don't know when $\phi(0)$ is such a solution nor characterize the cases when it happens.

In part $2.$~of Theorem $\ref{9503}$, we actually proved that when $\phi(0)>0$, for each direction ${\rm v}\in\mathbb{R}^{d}_{+}$, 
there is at least one coordinate of the field $\mathbf{X}$ which goes to $+\infty$, almost surely.
\end{remark}

\section{On the scale field $(\mathbf{T}_{\rm r},\mathbb{X}_{\mathbf{T}_{\rm r}})$}\label{6932}

Let us recall the definition of the matrix valued field $\mathbb{X}=\{\mathbb{X}_{\rm t}, {\rm t}\in\mathbb{R}^{d}_{+}\}$ given in the 
beginning of Section \ref{2018}. As already noticed, this field carries on the same information as the spaLf $\mathbf{X}$. 
However, whereas the vector $\mathbf{X}_{\mathbf{T}_{\rm r}}$ is deterministic on the set $\{\mathbf{T}_{\rm r}\in\mathbb{R}_+^d\}$ 
(and is actually equal to $-{\rm r}$), the matrix  $\mathbb{X}_{\mathbf{T}_{\rm r}}$ is random whenever $d\ge2$. From another point 
of view, the fact that the field ${\rm r}\mapsto(\mathbf{T}_{\rm r},\mathbb{X}_{\mathbf{T}_{\rm r}})$ has independent and stationary 
increments (see the next theorem) induces an analogy with fluctuation theory in dimension 1. This bivariate field can be 
considered as some scale field describing the fluctuations of the field $\mathbb{X}$ at its 'infimum'. The aim of this section is to 
describe the law of this scale field, first through its Laplace exponent and then from a Kemperman's type identity relating its law 
to this of the field $\mathbb{X}$.

Recall that we denote by $\mu^{(j)}$ the $j$-th column of the matrix $\mu=(\mu_{i,j})_{i,j\in[d]}$.  Then given a spaLf 
$\mathbf{X}$ we define the set 
\[\mathcal{M}_\varphi=\{(\lambda,\mu)\in\mathbb{R}^{d}_{+}\times{M}_d(\mathbb{R}_{+}):
\lambda_{j}\ge\varphi_{j}(\mu^{(j)}),\,j\in[d]\}.\]

\begin{theorem}\label{4126} Assume that $(H)$ holds. Let ${\rm r}=(r_1,\dots,r_d)\in\mathbb{R}_+^d$ and let 
$\mathbf{T}_{\rm r}$ be the first hitting time of level $-{\rm r}$ by the spaLf $\mathbf{X}$, then there exits a mapping 
$\Phi=(\Phi_{1},\dots,\Phi_{d}) : \mathcal{M}_\varphi \rightarrow \mathbb{R}^{d}_{+}$ such that
\[\mathbb{E}\left[e^{-\langle\lambda,\mathbf{T}_{\rm r}\rangle -\llangle \mu,
\mathbb{X}_{\mathbf{T}_{\rm r}}\rrangle}\ind_{\{\mathbf{T}_{\rm r}\in\mathbb{R}^d_+\}}\right]=
e^{-\langle {\rm r},\Phi(\lambda,\mu)\rangle},\;\;\; (\lambda,\mu)\in\mathcal{M}_\varphi.\]
Moreover $\Phi$ satisfies the equations, 
\begin{equation}\label{2978}
\varphi_{j}(\mu^{(j)}+
\Phi(\lambda,\mu))=\lambda_{j}, \;\;\; j\in[d],\;\;\;(\lambda,\mu)\in\mathcal{M}_\varphi,
\end{equation}
and it is explicitly determined by
\begin{equation}\label{8778}
\Phi(\lambda,\mu)=\phi^{\mu}(\lambda_{1}-\varphi_{1}(\mu^{(1)}),\dots,\lambda_{d}-\varphi_{d}(\mu^{(d)}))
\end{equation}
where $\phi^{\mu}$ is the inverse of the Laplace exponent $\varphi^{\mu}=(\varphi_{1}^{\mu^{(1)}},\dots,\varphi_{d}^{\mu^{(d)}})$ of 
the Esscher transform $\mathbf{X}^{\mu}$ defined in $(\ref{3489})$. 
\end{theorem}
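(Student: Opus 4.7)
The plan is to derive the Laplace transform identity by combining the Esscher transform (\ref{3790}) of the spaLf, applied at the random index $\mathbf{T}_{\rm r}$, with the Laplace exponent formula of Proposition \ref{8427} applied to the transformed field $\mathbf{X}^\mu$.

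I would first exploit the product structure of the density. Since the ${\rm X}^{(j)}$'s are independent, for a deterministic ${\rm t}$ the density in (\ref{3790}) factors as $\prod_{j=1}^{d}M^{(j)}_{t_j}$ with $M^{(j)}_s=e^{-\langle\mu^{(j)},{\rm X}^{(j)}_s\rangle-s\varphi_j(\mu^{(j)})}$, and each $M^{(j)}$ is a positive martingale of the enlarged filtration $(\mathcal{G}^{(j)}_t)$ introduced in the proof of Proposition \ref{8427}, for which $T_{\rm r}^{(j)}$ is a stopping time. Truncating each $T_{\rm r}^{(j)}$ by $n$, applying one-dimensional optional sampling to each factor, using the independence across $j$, and letting $n\to\infty$, the density formula extends to the multi-index stopping time. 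On the event $\{\mathbf{T}_{\rm r}\in\mathbb{R}_+^d\}$ this gives
\[
\e[e^{-\langle\lambda,\mathbf{T}_{\rm r}\rangle-\llangle\mu,\mathbb{X}_{\mathbf{T}_{\rm r}}\rrangle}\ind_{\{\mathbf{T}_{\rm r}\in\mathbb{R}_+^d\}}]=\e^{\mu}[e^{-\langle\lambda-\bar{\varphi}(\mu),\mathbf{T}_{\rm r}\rangle}\ind_{\{\mathbf{T}_{\rm r}\in\mathbb{R}_+^d\}}].
\]
The constraint $(\lambda,\mu)\in\mathcal{M}_\varphi$ is exactly $\lambda-\bar{\varphi}(\mu)\in\mathbb{R}_+^d$, so the right-hand side is a genuine Laplace transform of $\mathbf{T}_{\rm r}$ under $\p^\mu$.

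Next I would verify that $\mathbf{X}^\mu$ still satisfies hypothesis $(H)$, so that Proposition \ref{8427}(3) and Theorem \ref{4526} apply to it. Since $\varphi^{\mu^{(j)}}_j(\lambda)=\varphi_j(\lambda+\mu^{(j)})-\varphi_j(\mu^{(j)})$ is convex with value $0$ at the origin, and since $(H)$ combined with the convex growth inequality $\varphi_j(t\nu)\ge t\varphi_j(\nu)$ for $t\ge1$ and $\nu\in D$ forces $\varphi_j$ to be unbounded along appropriate rays, the set $D^\mu$ is nonempty. Hence there is a Laplace exponent $\phi^\mu:(0,\infty)^d\to D^\mu$ with $\e^\mu[e^{-\langle\zeta,\mathbf{T}_{\rm r}\rangle}\ind_{\{\mathbf{T}_{\rm r}\in\mathbb{R}_+^d\}}]=e^{-\langle\phi^\mu(\zeta),{\rm r}\rangle}$. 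Taking $\zeta=\lambda-\bar\varphi(\mu)$ immediately yields the explicit formula (\ref{8778}) $\Phi(\lambda,\mu)=\phi^\mu(\lambda-\bar\varphi(\mu))$, and in particular the existence of the claimed exponent; boundary points where $\lambda_j=\varphi_j(\mu^{(j)})$ for some $j$ are covered by monotone convergence. The functional equation (\ref{2978}) then follows by inversion: by Theorem \ref{4526} applied to $\mathbf{X}^\mu$, $\varphi^\mu\circ\phi^\mu$ is the identity on $(0,\infty)^d$, so
\[
\varphi_j(\mu^{(j)}+\Phi(\lambda,\mu))-\varphi_j(\mu^{(j)})=\lambda_j-\varphi_j(\mu^{(j)}),
\]
whence $\varphi_j(\mu^{(j)}+\Phi(\lambda,\mu))=\lambda_j$.

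The main obstacle I expect is the rigorous multi-index optional sampling used to pass the Esscher density through the random time $\mathbf{T}_{\rm r}$. The crucial features that make it tractable are the product structure of the density (each factor depending only on the independent process ${\rm X}^{(j)}$) and the fact that each $T_{\rm r}^{(j)}$ is a stopping time of its own filtration. This reduces the multi-parameter problem to $d$ one-dimensional applications, with uniform integrability on $\{\mathbf{T}_{\rm r}\in\mathbb{R}_+^d\}$ controlled by truncation of $T_{\rm r}^{(j)}$ at level $n$, by the positivity of $\lambda-\bar\varphi(\mu)$ on $\mathcal{M}_\varphi$, and by dominated or monotone convergence as $n\to\infty$.
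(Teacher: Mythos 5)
Your proposal is correct in substance but takes a genuinely different route from the paper, and the comparison is instructive.

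The paper's proof proceeds in three steps: (i) it invokes the optional sampling theorem for martingales indexed by directed sets (Kurtz) applied to the multi-index martingale $\mathbf{M}_{\rm t}=e^{-\langle\bar\varphi(\mu),{\rm t}\rangle-\llangle\mu,\mathbb{X}_{\rm t}\rrangle}$, together with a double application of Fatou's lemma, to get only the \emph{inequality} $\e[\mathbf{M}_{\mathbf{T}_{\rm r}}\ind_{\{\mathbf{T}_{\rm r}\in\mathbb{R}^d_+\}}]\le 1$, which combined with the increment property gives the existence of $\Phi$; (ii) it establishes the functional equation \eqref{2978} via the Markov-type decomposition $(\mathbf{T}_{\rm r},\mathbb{X}_{\mathbf{T}_{\rm r}})=({\rm r},\mathbb{X}_{\rm r})+(\tilde{\mathbf{T}}_{{\rm r}+\mathbf{X}_{\rm r}},\tilde{\mathbb{X}}_{\tilde{\mathbf{T}}})$ at the fixed time ${\rm r}$; (iii) it inverts to get \eqref{8778}. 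Your proof instead aims directly at the exact change-of-measure identity at the random time, which immediately yields \eqref{8778} from the scalar-exponent theory (Proposition \ref{8427} and Theorem \ref{4526}) applied to $\mathbf{X}^\mu$, and then \eqref{2978} by composing with $\varphi^\mu$. This inverts the logical order of \eqref{2978} and \eqref{8778} relative to the paper and dispenses with the decomposition \eqref{6587}.

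The crucial and potentially fragile step in your plan is the exact identity
\[
\e\bigl[\mathbf{M}_{\mathbf{T}_{\rm r}}\,g(\mathbf{T}_{\rm r})\,\ind_{\{\mathbf{T}_{\rm r}\le{\rm u}\}}\bigr]
=\e\bigl[\mathbf{M}_{\rm u}\,g(\mathbf{T}_{\rm r})\,\ind_{\{\mathbf{T}_{\rm r}\le{\rm u}\}}\bigr]
=\e^\mu\bigl[g(\mathbf{T}_{\rm r})\ind_{\{\mathbf{T}_{\rm r}\le{\rm u}\}}\bigr],
\]
where the paper's Kurtz--Fatou route only yields an inequality. Your sentence ``applying one-dimensional optional sampling to each factor, using the independence across $j$'' hides the real argument, which is an \emph{iterated} conditioning on the enlarged filtrations $(\mathcal{G}^{(j)}_t)$: at step $j$ one conditions on $\mathcal{G}^{(j)}_{T^{(j)}_{\rm r}\wedge u_j}$, applies one-dimensional optional sampling for the bounded $\mathcal{G}^{(j)}$-stopping time $T^{(j)}_{\rm r}\wedge u_j$ to replace $M^{(j)}_{u_j}$ by $M^{(j)}_{T^{(j)}_{\rm r}}$, and must verify that all the other factors -- $M^{(i)}_{T^{(i)}_{\rm r}}$ for already-treated $i$, $M^{(i)}_{u_i}$ for later $i$, and $g(\mathbf{T}_{\rm r})\ind_{\{\mathbf{T}_{\rm r}\le{\rm u}\}}$ -- are $\mathcal{G}^{(j)}_{T^{(j)}_{\rm r}\wedge u_j}$-measurable. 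This measurability requires knowing that $\mathbf{T}_{\rm r}$ (hence $\mathbb{X}_{\mathbf{T}_{\rm r}}$) is determined by the paths of the $X^{(i)}$'s up to time $T^{(i)}_{\rm r}$ only, which follows from the iterative construction of the smallest solution in Lemma \ref{1377} but is not spelled out in your sketch. Once this is in place, the truncation is not by a number $n$ as you wrote but by letting all coordinates of ${\rm u}$ go to $\infty$ with monotone convergence, and the factorisation does indeed go through. Similarly, your convexity argument for $(H^\mu)$ is correct but needs to be tightened to a recession-function statement (convexity with $\varphi_j(0)=0$ and $\varphi_j(\nu)>0$ gives $\varphi_j(\mu^{(j)}+t\nu)-\varphi_j(\mu^{(j)})\ge t\varphi_j(\nu)+o(t)\to\infty$); the paper's absolute-continuity argument is shorter. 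Overall your route is arguably more transparent than the paper's, since it obtains the exact martingale identity rather than an inequality plus a separate Markov decomposition, but the exposition glosses over the measurability bookkeeping that makes the iterated one-dimensional optional sampling legitimate.
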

\begin{proof} Let us first note that the random field $\{\mathbf{M}_{\rm t},\,{\rm t}\in\mathbb{R}_+^d\}:=
\{e^{-\langle\bar{\varphi}(\mu),{\rm t}\rangle -\llangle \mu,
\mathbb{X}_{{\rm t}}\rrangle},\,{\rm t}\in\mathbb{R}_+^d\}$, where $ \bar{\varphi}(\mu)=(\varphi_{1}^{\mu^{(1)}}(\lambda),\dots,
\varphi_{d}^{\mu^{(d)}}(\lambda))$, is a multi-indexed martingale with respect to the filtration  
$\mathcal{F}_{\rm t}=\sigma\{\mathbf{X}_{\rm s},\,{\rm s}\le {\rm t}\}=
\sigma(\mathcal{F}_{t_1}^{(1)}\cup \mathcal{F}_{t_2}^{(2)}\dots\cup\mathcal{F}_{t_d}^{(d)})$, ${\rm t}\in\mathbb{R}^{d}_{+}$ 
in the sense of \cite{ku}. Fix ${\rm r}=(r_1,\dots,r_d)\in\mathbb{R}_+^d$ and define the sequence of multivariate random 
times $\mathbf{T}_{n,\rm r}=(T_{n,\rm r}^{(1)},\dots,T_{n,\rm r}^{(d)})$, $n\ge1$ by
\[T_{n,\rm r}^{(i)}=\sum_{k\ge0}{2^{-n}(k+1)}\ind_{\{2^{-n}k\le T_{\rm r}^{(i)}<2^{-n}(k+1)\}}+\infty\cdot 
\ind_{\{T_{\rm r}^{(i)}=\infty\}}.\]
Then $\mathbf{T}_{\rm r}$ and $\mathbf{T}_{n,\rm r}$, $n\ge1$ are stopping times of the filtration 
$(\mathcal{F}_{\rm t})_{{\rm t}\in\mathbb{R}^{d}_{+}}$ in the sense of \cite{ku}. Moreover, for each $i\in[d]$, the sequence 
$(T_{n,\rm r}^{(i)})_{n\geq 1}$ is non increasing and tends to $T_{\rm r}^{(i)}$ 
almost surely. Now for all ${\rm u}\in\mathbb{R}_+^d$, define $\mathbf{T}_{n,\rm r}^{({\rm u})}$ by
\[\mathbf{T}_{n,\rm r}^{({\rm u})}=\left\{\begin{array}{ll}
\mathbf{T}_{n,\rm r}\;\;&\mbox{on}\;\;\{\mathbf{T}_{n,\rm r}\le {\rm u}\}\\
{\rm u}\;\;&\mbox{on}\;\;\{\mathbf{T}_{n,\rm r}\le {\rm u}\}^c\,.
\end{array}\right.\]
Then $\mathbf{T}_{n,\rm r}^{({\rm u})}$ is a stopping time (see for instance the proof of Lemma (2.3) in \cite{ku}). Moreover, 
\[\mathbf{M}_{\mathbf{T}_{n,\rm r}^{({\rm u})}}=\sum_{{\rm v}\in {\rm D}_n,{\rm v}\leq{\rm u}}
\mathbf{M}_{\rm v}\ind_{\{\mathbf{T}_{n,\rm r}={\rm v}\}}+\mathbf{M}_{\rm u}\ind_{\{\mathbf{T}_{n,\rm r}\le {\rm u}\}^c}\le 
\sum_{{\rm v}\in {\rm D}_n,{\rm v}\leq{\rm u}}\mathbf{M}_{\rm v}+\mathbf{M}_{\rm u},\]
where ${\rm D}_n=\{{\rm v}\in\mathbb{R}_+^d:{\rm v}=2^{-n}k,\,k\ge0\}$.
Since the set $\{{\rm v}\in {\rm D}_n,{\rm v}\leq{\rm u}\}$ is finite, $\e\left[\mathbf{M}_{\mathbf{T}_{n,\rm r}^{({\rm u})}}\right]<\infty$. 
Moreover $\mathbf{T}_{n,\rm r}^{({\rm u})}$ and $\mathbf{M}_{\rm u}$ clearly satisfy the conditions (2.4) and (2.5) of Lemma (2.3) 
in \cite{ku}. Therefore, in virtue of this lemma,
\[\e\left[\mathbf{M}_{\mathbf{T}_{n,\rm r}^{({\rm u})}}\right]=1.\]
Then $\lim\limits_{n\rightarrow\infty}\mathbf{T}_{n,\rm r}^{({\rm u})}=\mathbf{T}_{\rm r}^{({\rm u})}$ almost surely, where 
\[\mathbf{T}_{\rm r}^{({\rm u})}=\left\{\begin{array}{ll}
\mathbf{T}_{\rm r}\;\;&\mbox{on}\;\;\{\mathbf{T}_{\rm r}\le {\rm u}\}\\
{\rm u}\;\;&\mbox{on}\;\;\{\mathbf{T}_{\rm r}\le {\rm u}\}^c\,,
\end{array}\right.\]
so that by Fatou's Lemma and right continuity of $\{ \mathbf{M}_{\rm t},\,{\rm t}\in\mathbb{R}_+^d\}$, we obtain as $n$ tends 
to $\infty$, $\e\left[\mathbf{M}_{\mathbf{T}_{\rm r}^{({\rm u})}}\right]\le1$. Then by applying Fatou's Lemma again, we obtain as 
each coordinate of ${\rm u}$ tends to $\infty$ that 
$\e\left[\mathbf{M}_{\mathbf{T}_{\rm r}}\ind_{\{\mathbf{T}_{\rm r}\in\mathbb{R}^d_+\}}\right]\le1$. It implies
that for all $(\lambda,\mu)\in\mathcal{M}_\varphi$, $\mathbb{E}\left[e^{-\langle\lambda,\mathbf{T}_{\rm r}\rangle -\llangle \mu,
\mathbb{X}_{\mathbf{T}_{\rm r}}\rrangle}\ind_{\{\mathbf{T}_{\rm r}\in\mathbb{R}^d_+\}}\right]\le1$.

Then we prove in the same way as for (\ref{3683}) in Proposition \ref{8427}, that for all 
${\rm r},{\rm r}'\in\mathbb{R}^d_+$, 
\begin{equation}\label{1087}
(\mathbf{T}_{{\rm r}+{\rm r}'},\mathbb{X}_{\mathbf{T}_{{\rm r}+{\rm r}'}})\ind_{\{\mathbf{T}_{{\rm r}+{\rm r}'}\in\mathbb{R}^d_+\}}
\ed (\mathbf{T}_{\rm r}+ \mathbf{T}'_{{\rm r}'},\mathbb{X}_{\mathbf{T}_{\rm r}}+\mathbb{X}'_{\mathbf{T}'_{{\rm r}'}})
\ind_{\{\mathbf{T}_{\rm r}+\mathbf{T}'_{{\rm r}'}\in\mathbb{R}^d_+\}},
\end{equation}
where $\mathbb{X}'$ is an independent copy of $\mathbb{X}$ and $\mathbf{T}'$ is its first hitting time process.
Recall that under assumption $(H)$,
$\p(\mathbf{T}_{\rm r}\in\mathbb{R}^d_+)>0$ for all ${\rm r}\in\mathbb{R}^d_+$.
The existence of the mapping $\Phi$ follows by using (\ref{1087}), in the same way as for the existence of the mapping $\phi$ 
in 3.~of Proposition \ref{8427}. (Note that in particular $\Phi(\lambda,0)=\phi(\lambda)$, $\lambda\in\mathbb{R}^d_+$.)\\

Then it is readily seen that 
\begin{equation}\label{6587}
(\mathbf{T}_{\rm r},\mathbb{X}_{\mathbf{T}_{\rm r}}) = ({\rm r},\mathbb{X}_{\rm r})+(\tilde{\mathbf{T}}_{{\rm r}+
\mathbf{X}_{\rm r}},\tilde{\mathbb{X}}_{\tilde{\mathbf{T}}_{{\rm r}+\mathbf{X}_{\rm r}}})\;\;
\mbox{a.s. on $\{\mathbf{T}_{\rm r}\in\mathbb{R}^d_+\}$,}
\end{equation}
where $\tilde{\mathbb{X}}_{\rm t}=\mathbb{X}_{\rm r+t}-\mathbb{X}_{\rm r}$ and 
$\tilde{\mathbf{T}}_{\rm k}=\inf\{{\rm t}\geq 0 :\tilde{\mathbf{X}}_{\rm t}=-{\rm k}\}$. Since $\mathbf{X}$ is a spaLf, for all 
${\rm t}\in\mathbb{R}^{d}_{+}$, $\tilde{\mathbf{X}}_{\rm t}$ has the same law as $\mathbf{X}_{\rm t}$ and is independent of 
$\{ \mathbf{X}_{\rm s} : {\rm s}\leq {\rm r}\}$. Thus conditionally on $\{\mathbf{T}_{\rm r}\in\mathbb{R}^d_+\}$, 
$\tilde{\mathbf{T}}_{{\rm r}+\mathbf{X}_{\rm r}}$ 
and $\tilde{\mathbb{X}}_{\tilde{\mathbf{T}}_{{\rm r}+\mathbf{X}_{\rm r}}}$ are independent of $\mathbb{X}_{\rm r}$. 
Let $(\lambda,\mu)\in\mathcal{M}_\varphi$, then using (\ref{6587}), we obtain
\[e^{-\langle r, \Phi(\lambda,\mu)\rangle}
	= e^{-\langle \lambda,{\rm r}\rangle}
	\int_{M_d(\mathbb{R})} \mathbb{E}[e^{-\langle \lambda,\mathbf{T}_{\rm r+\overline{\textsc{x}}}\rangle} 
	e^{-\llangle \mu,\mathbb{X}_{\mathbf{T}_{\rm r +\overline{\textsc{x}}}\rrangle}}
	\ind_{\{\mathbf{T}_{\rm r+\overline{\textsc{x}}}\in\mathbb{R}^d_+\}}]
	e^{-\llangle\mu,\textsc{x}\rrangle}\mathbb{P}(\mathbb{X}_{\rm r}\in {\rm d}\textsc{x}),\]
where $\textsc{x}=(x^{(1)},\dots,x^{(d)})$ and $\overline{\textsc{x}}=\sum\limits_{j\in[d]} {\rm x}^{(j)}=
\left( \sum\limits_{j\in[d]} x^{1,j},\dots,\sum\limits_{j\in[d]} x^{d,j}\right)$.
This equality can also be written as
\begin{align*}
e^{-\langle r, \Phi(\lambda,\mu)\rangle}
	&= e^{-\langle \lambda,{\rm r}\rangle}
	\int_{M_d(\mathbb{R})} e^{-\langle \rm r+\overline{\textsc{x}},\Phi(\lambda,\mu)\rangle}e^{-\llangle\mu,\textsc{x}\rrangle}
	\mathbb{P}(\mathbb{X}_{\rm r}\in {\rm d}\textsc{x}) 
	\\&= e^{-\langle \lambda,{\rm r}\rangle}
		e^{-\langle r, \Phi(\lambda,\mu)\rangle}
	\mathbb{E}[ e^{-\llangle \mu+\hat{\Phi}(\lambda,\mu),\mathbb{X}_{\rm r}\rrangle}],
\end{align*}
where $\hat{\Phi}(\lambda,\mu)$ is the matrix whose all columns are equal to $\Phi(\lambda,\mu)$. Thanks to the independence 
of the ${\rm X}^{(j)}$'s, the latter equality is reduced to
\[e^{\langle \lambda,{\rm r}\rangle}
		=\prod\limits_{j\in[d]}\mathbb{E}[ e^{-\langle \mu^{(j)}+\Phi(\lambda,\mu),{\rm X}^{(j)}_{r_{j}}\rangle}]\,.\ \]
As a consequence, the Laplace exponent $\Phi$ of $(\mathbf{T}_{\rm r},\mathbb{X}_{\mathbf{T}_{\rm r}})$ satisfy
(\ref{2978}).\\

Now recall the definition of the Esscher transform ${\rm X}^{\mu^{(j)},(j)}$ of each ${\rm X}^{(j)}$ given after Proposition \ref{8427}, 
with Laplace exponent 
\[\varphi_{j}^{\mu^{(j)}}(\lambda)=\varphi_{j}(\lambda+\mu^{(j)})-\varphi_{j}(\mu^{(j)}),\;\;\;
\lambda\in\mathbb{R}^{d}_{+},\;\;\;j\in[d]\,.\ \]
From these Esscher transforms we defined, see $(\ref{3489})$, the spaLf $\mathbf{X}^{\mu}$ by
\[\mathbf{X}^{\mu}_{\rm t} = \sum\limits_{j\in[d]} {\rm X}^{\mu^{(j)},(j)}_{t_{j}}, \;\;\; {\rm t}\in\mathbb{R}^{d}_{+}.\] 
Let $D_\mu=\{\lambda\in\mathbb{R}^d_+:\varphi_{j}^{\mu^{(j)}}(\lambda)>0,j\in[d]\}$. Then under assumption $(H)$,
from part 1.~of Theorem \ref{4526} and from the absolute continuity relationship (\ref{3790}) between $\mathbf{X}$ and
$\mathbf{X}^{\mu}$, the set $D_\mu$ is not empty. Moreover, thanks to Theorem \ref{4526}, the Laplace 
exponent $\varphi^{\mu}=(\varphi_{1}^{\mu^{(1)}},\dots,\varphi_{d}^{\mu^{(d)}})$ of $\mathbf{X}^{\mu}$ 
is a diffeomorphism from $D_\mu$, whose inverse $\phi^{\mu}:(0,\infty)^d\rightarrow D_\mu$ is the Laplace exponent of the 
field $\{\mathbf{T}^{\mu}_{\rm r}, {\rm r}\in\mathbb{R}^{d}_{+}\}$, where $ \mathbf{T}^{\mu}_{\rm r}=\inf\{ {\rm t}\geq 0: 
\mathbf{X}^{\mu}_{\rm t}=-{\rm r}\}$.

On the other hand, from (\ref{2978}), $\Phi$ satisfies
\[\varphi_{j}^{\mu^{(j)}}(\Phi(\lambda,\mu))=\lambda_{j}-\varphi_{j}(\mu^{(j)}), \;\;\; j\in[d],\;\;\;(\lambda,\mu)\in\mathcal{M}_\varphi\,.\ \]
Thus the Laplace exponent $\Phi$ of the couple $(\mathbf{T}_{\rm r},\mathbb{X}_{\mathbf{T}_{\rm r}})$ exists and is given 
for all for $(\lambda,\mu)\in\mathcal{M}_\varphi$ such that $\lambda_j>\varphi_{j}(\mu^{(j)})$, $j\in[d]$ by
\begin{equation}
\Phi(\lambda,\mu)=\phi^{\mu}(\lambda_{1}-\varphi_{1}(\mu^{(1)}),\dots,\lambda_{d}-\varphi_{d}(\mu^{(d)})),
\end{equation}
Finally this relation is extended to the whole set $\mathcal{M}_\varphi$
by continuity. 
\end{proof}

\begin{remark}
We emphasize that Theorem $\ref{4126}$ provides an extension of the case $d=1$. More specifically, $(\ref{2978})$
can be compared to relation $(2)$, p.~$191$ in \cite{be}.
\end{remark}

Let us define the set
\begin{eqnarray*}
\widehat{M}_d(\mathbb{R})&=&\{\textsc{x}\in{M}_d(\mathbb{R}):\mbox{$\textsc{x}$ is essentially nonnegative and 
$\textsc{x}\cdot{\bf 1}\le0$}\}
\end{eqnarray*}
endowed with some matrix norm, $\|\cdot\|$ 
and  equipped with its Borel $\sigma$-field.
From Theorem \ref{4126}, the measure $\p(\mathbf{T}_{\rm r}\in {\rm dt},\,\mathbb{X}_{\rm t}\in {\rm d}\textsc{x}){\rm d}{\rm r}$ 
on $\mathbb{R}_+^d\times\mathbb{R}_+^d\times\widehat{M}_d(\mathbb{R})$ has Laplace transform 
\begin{eqnarray}
&&\int_{\mathbb{R}_+^d\times\mathbb{R}_+^d\times\widehat{M}_d(\mathbb{R})}e^{-\langle\alpha,{\rm r}
\rangle-\langle\lambda,{\rm t}\rangle -\llangle \mu,\textsc{x}\rrangle}\,\p(\mathbf{T}_{\rm r}\in {\rm dt},\,
\mathbb{X}_{\rm t}\in {\rm d}\textsc{x}){\rm d}{\rm r}\nonumber\\
&&\qquad\qquad=
[(\alpha_1+\Phi_{1}(\lambda,\mu))(\alpha_2+\Phi_{2}(\lambda,\mu))\dots(\alpha_d+\Phi_{d}(\lambda,\mu))]^{-1}.\label{6252}
\end{eqnarray}
The following result shows that this measure can be expressed only in terms of the law of the spaLf. 

\begin{theorem}\label{3492} 
Assume that $(H)$ is satisfied. Then for all $\alpha\in\mathbb{R}_+^d$ and $(\lambda,\mu)\in\mathcal{M}_\varphi$,
\begin{eqnarray}
&&\int_{\mathbb{R}_+^d\times\mathbb{R}_+^d\times\widehat{M}_d(\mathbb{R})}e^{-\langle\alpha,{\rm r}
\rangle-\langle\lambda,{\rm t}\rangle -\llangle \mu,\textsc{x}\rrangle}\,\p(\mathbf{T}_{\rm r}\in {\rm dt},\,
\mathbb{X}_{\rm t}\in {\rm d}\textsc{x}){\rm d}{\rm r}\nonumber\\
&&\qquad\qquad=\int_{\mathbb{R}_+^d\times\widehat{M}_d(\mathbb{R})}e^{\langle\alpha,\textsc{x}\cdot{\bf 1}
\rangle-\langle\lambda,{\rm t}\rangle -\llangle \mu,\textsc{x}\rrangle}
\frac{\mbox{det}(-\textsc{x})}{t_1t_2\dots t_d}\p(\mathbb{X}_{\rm t}\in {\rm d}\textsc{x})\,{\rm dt}\,.\label{7370}
\end{eqnarray}
In other terms, the measure 
\[\p(\mathbf{T}_{\rm r}\in {\rm dt},\,
\mathbb{X}_{\rm t}\in {\rm d}\textsc{x}){\rm d}{\rm r},\;\;\;{\rm t}\in\mathbb{R}^d_+,\;{\rm x}\in\widehat{M}_d(\mathbb{R}),\;
{\rm r}\in\mathbb{R}^d_+,\]
is the image of the  measure 
\[\frac{\mbox{det}(-\textsc{x})}{t_1t_2\dots t_d}\p(\mathbb{X}_{\rm t}\in {\rm d}\textsc{x})\,{\rm dt},\;\;\;
{\rm t}\in\mathbb{R}^d_+,\;{\rm x}\in\widehat{M}_d(\mathbb{R}),\]
through the mapping $(t,\textsc{x})\mapsto (t,\textsc{x},-\textsc{x}\cdot{\bf 1})$. 
\end{theorem}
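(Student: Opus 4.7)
The plan is to reduce the identity to a Laplace transform comparison and then to establish it via the discrete approximation announced in the introduction. By Theorem \ref{4126} and formula (\ref{6252}), the Laplace transform of the LHS of (\ref{7370}) is explicitly $\prod_{i=1}^d (\alpha_i+\Phi_i(\lambda,\mu))^{-1}$. Both sides of (\ref{7370}) are Laplace transforms of positive measures on $\mathbb{R}_+^d\times\mathbb{R}_+^d\times\widehat{M}_d(\mathbb{R})$, the positivity of $\det(-\textsc{x})$ on $\widehat{M}_d(\mathbb{R})$ being a consequence of the M-matrix structure of $-\textsc{x}$. By uniqueness of Laplace transforms, it therefore suffices to show that the RHS of (\ref{7370}), viewed as a function of $(\alpha,\lambda,\mu)$, also equals $\prod_{i=1}^d(\alpha_i+\Phi_i(\lambda,\mu))^{-1}$, or equivalently to establish (\ref{7370}) directly at the level of positive measures.

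To do so, I would approximate the spaLf $\mathbf{X}$ by a sequence $\mathbf{S}^{(n)}$ of multitype random walks of the type studied in \cite{cl} and \cite{ch}, where each component $S^{i,j,(n)}$ has the same structural property as $X^{i,j}$ (downward skip free when $i=j$, non-decreasing when $i\neq j$), and such that the rescaled matrix field $\tfrac{1}{n}\mathbb{S}^{(n)}_{\lfloor n{\rm t}\rfloor}$ converges in law to $\mathbb{X}_{\rm t}$ as $n\to\infty$. The multivariate discrete Kemperman-type identity established in \cite{cl,ch} reads
\[\p\bigl(\mathbf{T}^{(n)}_{\rm r}=\mathbf{k},\,\mathbb{S}^{(n)}_{\mathbf{k}}=\textsc{s}\bigr)=\frac{\det(-\textsc{s})}{k_1\cdots k_d}\,\p\bigl(\mathbb{S}^{(n)}_{\mathbf{k}}=\textsc{s}\bigr),\]
which is the exact combinatorial analogue of (\ref{7370}). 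Multiplying by the test function $e^{-\langle\alpha,{\rm r}\rangle-\langle\lambda,\mathbf{k}/n\rangle-\llangle\mu,\textsc{s}\rrangle}$, summing, and rescaling in $n$, one obtains the discrete counterpart of (\ref{7370}). Passing to the limit as $n\to\infty$ --- using weak convergence of $(\mathbf{T}^{(n)}_{\rm r},\mathbb{S}^{(n)}_{\mathbf{T}^{(n)}_{\rm r}})$ to $(\mathbf{T}_{\rm r},\mathbb{X}_{\mathbf{T}_{\rm r}})$ on the LHS (continuity of the multivariate hitting time can be read off from Lemma \ref{1377} together with the quasi-left continuity exploited in the proof of Proposition \ref{8427}), and a Riemann sum approximation on the RHS --- yields (\ref{7370}).

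The main difficulty will lie in controlling the Riemann sum on the right-hand side, since the factor $\det(-\textsc{x})/(t_1\cdots t_d)$ is singular on the coordinate axes $\{t_j=0\}$ and the field $\mathbb{X}_{\rm t}$ is unbounded. Near the origin one must check that $\e[\det(-\mathbb{X}_{\rm t})\ind_{\{\mathbf{X}_{\rm t}\le 0\}}]/(t_1\cdots t_d)$ stays bounded as ${\rm t}\to 0$; this can be obtained from the fact that each column of $\mathbb{X}_{\rm t}$ has mean proportional to $t_j$, so that the expected determinant is of order $t_1\cdots t_d$. At infinity, the condition $(\lambda,\mu)\in\mathcal{M}_\varphi$ provides $\lambda_j\geq\varphi_j(\mu^{(j)})$ and hence exponential decay of $\e\bigl[|\det(-\mathbb{X}_{\rm t})|e^{-\langle\lambda,{\rm t}\rangle-\llangle\mu,\mathbb{X}_{\rm t}\rrangle}\bigr]$ via the Esscher transform (\ref{3790}). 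Once this uniform integrability is in place, the Riemann sum converges, the limit identity is obtained, and uniqueness of the Laplace transform concludes the proof of (\ref{7370}).
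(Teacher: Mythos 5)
Your high-level strategy --- pass the discrete multitype ballot formula of \cite{cl,ch} to the continuum --- is the right one and is exactly the strategy of the paper. However, there is a genuine gap at the key step, and the approximation scheme you choose also makes the rest harder than it needs to be.

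The gap is your appeal to weak convergence of $(\mathbf{T}^{(n)}_{\rm r},\mathbb{S}^{(n)}_{\mathbf{T}^{(n)}_{\rm r}})$ to $(\mathbf{T}_{\rm r},\mathbb{X}_{\mathbf{T}_{\rm r}})$. First passage times are not continuous functionals on Skorokhod space, and weak convergence of the rescaled paths $\tfrac1n\mathbb{S}^{(n)}_{\lfloor n{\rm t}\rfloor}\Rightarrow\mathbb{X}_{\rm t}$ does not by itself yield convergence of the multivariate hitting times, let alone of the pair $(\mathbf{T},\mathbb{X}_{\mathbf{T}})$; Lemma~\ref{1377} is deterministic and controls monotonicity in ${\rm r}$, not continuity in the path variable, and quasi-left continuity of $\mathrm{X}^{(j)}$ says nothing about continuity of the map $\textsc{x}\mapsto\mathbf{T}_{\rm r}(\textsc{x})$ at the point $\mathbb{X}$. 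This is precisely the subtlety the paper's proof is designed to sidestep: rather than proving convergence of the hitting times, it identifies the limit of the left side purely analytically, by showing $\Phi^{(k)}\to\Phi$ as inverses of pointwise convergent Laplace exponents $\varphi^{(k)}\to\varphi$ (Theorems~\ref{4126} and~\ref{4226} and display~(\ref{6252})), and then reads the left side of~(\ref{7370}) off from $\prod_i(\alpha_i+\Phi_i(\lambda,\mu))^{-1}$.

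The second difference concerns the choice of approximating objects. You use discrete-time random walks under Donsker rescaling, which leaves both a time mesh and a space mesh to control and forces a Riemann-sum argument near the singular locus $\{t_j=0\}$; your sketch that $\e[\det(-\mathbb{X}_{\rm t})]$ is of order $t_1\cdots t_d$ is heuristic and not easy to make rigorous. The paper instead approximates $\mathbf{X}$ by continuous-time, $k^{-1}\mathbb{Z}$-valued lattice L\'evy fields (Lemma~\ref{2895}), for which Proposition~\ref{1636} already gives an \emph{exact} continuous-time Kemperman formula by Poissonization of the discrete one; thus there is no time-discretization error at all on the right side, and weak convergence is only needed for the fixed-time marginals $\mathbb{X}^{(k)}_{\rm t}\Rightarrow\mathbb{X}_{\rm t}$, which is elementary. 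The remaining singularity at ${\rm t}\to0$ is then bounded on the \emph{left}, using $\p(\mathbf{T}_{\rm r}\ge\varepsilon{\bf 1}\}^c)\to0$ as $\varepsilon\to0$ (via $\lim_s\phi_j(s{\rm e}_i)=\infty$), which is again an analytic estimate rather than a claim about the determinant near the origin. To repair your argument you would either have to prove a.s.\ continuity of the multivariate first passage functional at the law of $\mathbb{X}$ under Donsker scaling --- a nontrivial result that is not stated anywhere in the paper --- or switch to the lattice L\'evy approximation and the Laplace-exponent route.
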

\noindent When $d=1$, the above identity can be read as 
\begin{equation}\label{2202}
\p(T_x\in{\rm d}t){\rm d}x=\frac{-x}{t}\p(X_t\in {\rm d}x)\,{\rm d}t,\;\;\;(t,x)\in(0,\infty)\times(-\infty,0),
\end{equation}
and is known as Kemperman's identity for spectrally positive L\'evy processes. It can be found in \cite{be}, see Proposition VII.2.\\

We shall prove Theorem \ref{3492} through discrete approximation. As a first step, we need to recall the discrete time and space 
counterpart of spaLf's.  Those are matrix valued fields of the form $\{\mathbb{S}_{\rm n},\,{\rm n}\in\mathbb{Z}_+^d\}=
\{(S^{i,j}_{n_j})_{i,j\in[d]},\,{\rm n}\in\mathbb{Z}_+^d\}$, where the columns ${\rm S}^{(j)}={}^t(S^{1,j},\dots,S^{d,j})$, $j\in[d]$ are 
independent random walks. Moreover, all coordinates $S^{i,j}$ start from 0 and take their values in $k^{-1}\mathbb{Z}$, 
where $k\ge1$ is some integer which will be fixed until mentioned otherwise. For $i\neq j$ they are non 
decreasing and for $i=j$ they are downward skip free, that is $S^{i,i}_n-S^{i,i}_{n-1}\ge -k^{-1}$, for all $n\ge1$. 
This setting is introduced in \cite{cl} (for $k=1$ and up to transposition of the matrix $\mathbb{S}$). 
Equivalently to the  continuous case, we define the field $\mathbf{S}:=\mathbb{S}\cdot{\bf 1}$ and its first hitting time process
\[\mathbf{T}_{\rm r}^{\mathbf{S}}=\inf\{{\rm n}:\mathbf{S}_{\rm n}=-{\rm r}\},\;\;\;{\rm r}\in k^{-1}\mathbb{Z}_+^d,\] 
see Lemma 2.2 in \cite{cl}. The field $\mathbb{S}$ (or equivalently $\mathbf{S}$) will be called a downward skip free random 
field (dsfrf for short). An essential result for the proof of the theorem $4.2$, is the following extension of the ballot theorem 
\begin{equation}\label{7377}
\p(\mathbf{T}_{\rm r}^{\mathbf{S}}={\rm n},\mathbf{S}_{\rm n}=\textsc{x})=\frac{k^d\mbox{det}(-\textsc{x})}{n_1\dots n_d}
\p(\mathbf{S}_{\rm n}=\textsc{x}),
\end{equation}
for all ${\rm n}\in\mathbb{N}^d$ and all essentially nonnegative matrix ${\rm x}$ of $M_d(k^{-1}\mathbb{Z})$ such that 
$\textsc{x}\cdot{\bf 1}=-{\rm r}$. (Here we have used the notation $\mathbb{N}=\mathbb{Z}_+\setminus\{0\}$.) Identity (\ref{7377}) is 
proved for $k=1$ in \cite{cl}, see Theorem 3.4 therein. Its extension to any $k\ge1$ is straightforward.

The next step is to consider lattice valued spaLf's. Let us first define these processes. 
Let  ${\rm X}^{(j)}=$ $^t(X^{1,j},\dots,X^{d,j})$, $j\in[d]$ be a family of $d$ independent $d$-dimensional L\'evy processes
such that for $i\neq j$, $X^{i,j}$ is non-decreasing $k^{-1}\mathbb{Z}$-valued L\'evy process and for each $j\in[d]$, $X^{j,j}$ is a $k^{-1}\mathbb{Z}$-valued L\'evy process 
such that for all $t>0$, $X^{j,j}_t-X^{j,j}_{t-}\ge -k^{-1}$. Then there exists a dsfrf $\mathbb{S}$ as defined above and $d$ 
independent Poisson processes $N^{(j)}$, $j\in[d]$ also independent of $\mathbb{S}$ such that 
\begin{equation}\label{3613}
X^{i,j}_t=S^{i,j}_{N^{(j)}_t},\;\;\;i,j\in[d],\;\;\;t\ge0.
\end{equation}
The random fields $\{\mathbb{X}_{\rm t},\,{\rm t}\in\mathbb{R}_+^d\}=\{(X^{i,j}_{t_j})_{i,j\in[d]},\,{\rm t}\in\mathbb{R}_+^d\}$
and $\mathbf{X}=\mathbb{X}\cdot{\bf 1}$ will be referred to as lattice valued spaLf's.
Let $(e_n^{(j)})_{n\ge0}$, $j\in[d]$ be the sequences of exponentially distributed random variables satisfying
\[N^{(j)}_t=\sum_{n\ge0}\ind_{\{e^{(j)}_1+\dots +e^{(j)}_n\le t\}}.\]
The first hitting time process of $\mathbf{X}$ can be defined in the same way as for spaLf's in Lemma \ref{1377} and 
Proposition \ref{8427}. It is denoted by  
\[\mathbf{T}_{\rm r}=\inf\{{\rm t}:\mathbf{X}_{\rm t}=-{\rm r}\},\;\;\;{\rm r}\in k^{-1}\mathbb{Z}_+^d.\] 
We can easily check that the latter is related to the first hitting time process of $\mathbf{S}$ through the identity,
\begin{equation}\label{3673}
T_{\rm r}^{(j)}=\sum_{l=1}^{T_{\rm r}^{(j),\mathbf{S}}}e^{(j)}_l, \;\; j\in[d].
\end{equation}
The following proposition is a direct consequence of (\ref{7377}). Although it can also be found in \cite{ch} for $k=1$, we give 
a more direct proof here. 

\begin{proposition}\label{1636}
Let $\{\mathbb{X}_{\rm t},\,{\rm t}\in\mathbb{R}_+^d\}=\{(X^{i,j}_{t_j})_{i,j\in[d]},\,{\rm t}\in\mathbb{R}_+^d\}$ be a lattice valued 
spaLf. Then for fixed ${\rm r}\in k^{-1}\mathbb{Z}_+^d$, the joint law of 
$(\mathbf{T}_{\rm r},\mathbb{X}_{\mathbf{T}_{\rm r}})$ is given by 
\[\p(\mathbf{T}_{\rm r}\in {\rm dt},\,\mathbb{X}_{\rm t}=\textsc{x})
=\frac{k^d\mbox{det}(-\textsc{x})}{t_1t_2\dots t_d}\p(\mathbb{X}_{\rm t}=\textsc{x}){\rm d}t_1{\rm d}t_2\dots {\rm d}t_d\,,\]
for all essentially nonnegative matrices $\textsc{x}$ of $M_d(k^{-1}\mathbb{Z})$ such that $\textsc{x}\cdot{\bf 1}=-{\rm r}$.
\end{proposition}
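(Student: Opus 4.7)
The strategy is to exploit the Poissonian subordination~(\ref{3613}) which represents the lattice-valued spaLf $\mathbb{X}$ as a time-change of the dsfrf $\mathbb{S}$ by the independent Poisson processes $N^{(j)}$, and then reduce everything to the already established ballot-type identity~(\ref{7377}) for $\mathbb{S}$. Denote by $\beta_{j}>0$ the rate of $N^{(j)}$.

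The first step is to decompose the left-hand side according to the value of $\mathbf{T}_{\rm r}^{\mathbf{S}}$. On the event $\{\mathbf{T}_{\rm r}^{\mathbf{S}}={\rm n}\}$ with ${\rm n}\in\mathbb{N}^d$, relation~(\ref{3673}) gives $T^{(j)}_{\rm r}=\sum_{l=1}^{n_{j}}e^{(j)}_{l}$, so that $N^{(j)}_{T^{(j)}_{\rm r}}=n_{j}$ and hence $\mathbb{X}_{\mathbf{T}_{\rm r}}=\mathbb{S}_{\mathbf{T}_{\rm r}^{\mathbf{S}}}$. Since $\mathbf{T}_{\rm r}^{\mathbf{S}}$ is $\sigma(\mathbb{S})$-measurable while the sequences $(e^{(j)}_{l})_{l\ge 1}$ are globally independent of $\mathbb{S}$ and i.i.d.\ exponential of rate $\beta_{j}$, the conditional law of $\mathbf{T}_{\rm r}$ given $\{\mathbf{T}_{\rm r}^{\mathbf{S}}={\rm n},\mathbb{S}_{\rm n}=\textsc{x}\}$ is a product of independent $\mathrm{Gamma}(n_{j},\beta_{j})$ distributions. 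This gives
\[
\p(\mathbf{T}_{\rm r}\in{\rm d}t,\,\mathbb{X}_{\mathbf{T}_{\rm r}}=\textsc{x})=\sum_{{\rm n}\in\mathbb{N}^{d}}\p(\mathbf{T}_{\rm r}^{\mathbf{S}}={\rm n},\,\mathbb{S}_{\rm n}=\textsc{x})\prod_{j=1}^{d}\frac{\beta_{j}^{n_{j}}t_{j}^{n_{j}-1}}{(n_{j}-1)!}\,e^{-\beta_{j}t_{j}}\,{\rm d}t_{1}\cdots{\rm d}t_{d}.
\]

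Next, insert the discrete identity~(\ref{7377}) to replace $\p(\mathbf{T}_{\rm r}^{\mathbf{S}}={\rm n},\mathbb{S}_{\rm n}=\textsc{x})$ by $\tfrac{k^{d}\det(-\textsc{x})}{n_{1}\cdots n_{d}}\,\p(\mathbb{S}_{\rm n}=\textsc{x})$. The key algebraic observation is that the factor $1/n_{j}$ combines with the Gamma density to produce the Poisson weight $\p(N^{(j)}_{t_{j}}=n_{j})=(\beta_{j}t_{j})^{n_{j}}e^{-\beta_{j}t_{j}}/n_{j}!$ divided by $t_{j}$. Pulling out the common factor $k^{d}\det(-\textsc{x})/(t_{1}\cdots t_{d})$ leaves
\[
\frac{k^{d}\det(-\textsc{x})}{t_{1}\cdots t_{d}}\sum_{{\rm n}\in\mathbb{N}^{d}}\p(\mathbb{S}_{\rm n}=\textsc{x})\prod_{j=1}^{d}\p(N^{(j)}_{t_{j}}=n_{j})\,{\rm d}t_{1}\cdots{\rm d}t_{d}.
\]
The summation can be extended to ${\rm n}\in\mathbb{Z}_{+}^{d}$ for free, because any term with some $n_{j}=0$ forces the $j$-th column of $\mathbb{S}_{\rm n}=\textsc{x}$ to vanish and therefore $\det(-\textsc{x})=0$. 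Conditioning on the Poisson counts $(N^{(j)}_{t_{j}})_{j\in[d]}$ and using the independence of $\mathbb{S}$ from the $N^{(j)}$'s, the sum is precisely $\p(\mathbb{X}_{\rm t}=\textsc{x})$, which yields the claim.

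The only genuinely delicate step is the conditional Gamma law used in the decomposition: one needs that conditioning on $\{\mathbf{T}_{\rm r}^{\mathbf{S}}={\rm n}\}$ does not distort the distribution of the relevant exponentials $e^{(j)}_{1},\dots,e^{(j)}_{n_{j}}$. Since $\mathbf{T}_{\rm r}^{\mathbf{S}}$ is a functional of $\mathbb{S}$ alone and $\mathbb{S}$ is globally independent of all the $e^{(j)}_{l}$, this follows from a direct independence argument with no stopping-time subtlety on the Poisson side; the rest of the proof is bookkeeping on top of~(\ref{7377}).
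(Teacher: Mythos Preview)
Your proof is correct and follows essentially the same route as the paper's: decompose over the value of $\mathbf{T}_{\rm r}^{\mathbf{S}}$, use the independence of $\mathbb{S}$ from the exponential clocks to identify the conditional law of $\mathbf{T}_{\rm r}$ as a product of Gamma laws, apply the discrete ballot identity~(\ref{7377}), and then recombine the Gamma/$n_j$ factors into Poisson weights to recognise $\p(\mathbb{X}_{\rm t}=\textsc{x})$. Your explicit remark that the sum may be extended from $\mathbb{N}^d$ to $\mathbb{Z}_+^d$ because a zero column of $\textsc{x}$ kills $\det(-\textsc{x})$ is a small point the paper passes over silently.
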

\begin{proof}
Let ${\rm r}$ and $\textsc{x}=(x_{i,j})_{i,j\in[d]}$ be as in the statement. Then the straightforward identity 
$\mathbb{S}_{\mathbf{T}_{\rm r}^{\mathbf{S}}}=\mathbb{X}_{\mathbf{T}_{\rm r}}$ together with expressions (\ref{3613}) and 
(\ref{3673}) allow us to write,
\begin{align*}
\mathbb{P}(\mathbf{T}_{r}\in {\rm dt},\mathbb{X}_{\rm t}=\textsc{x})
        &=\mathbb{P}\left(\sum\limits_{l=1}^{T_{\rm r}^{(j),\mathbf{S}}} e_{l}^{(j)}\in {\rm d}t_{j},\,j\in[d],\,
	\mathbb{S}_{\mathbf{T}_{\rm r}^{\mathbf{S}}}=\textsc{x}\right)
	\\&= \sum\limits_{{\rm n}\in\mathbb{N}^{d}}\prod_{j\in[d]}
	\mathbb{P}\left(\sum\limits_{l=1}^{n_{j}} e_{l}^{(j)}\in {\rm d}t_{j}\right)
	\mathbb{P}(\mathbf{T}_{\rm r}^{\mathbf{S}}={\rm n}, \mathbb{S}_{\rm n}=\textsc{x})
	\\&=\sum\limits_{{\rm n}\in\mathbb{N}^{d}}
	\dfrac{\lambda_{1}^{n_{1}}t_{1}^{n_{1}}\dots\lambda_{d}^{n_d}t_{d}^{n_{d}}}{n_{1}!\dots n_{d}!}e^{-\langle 
	{\rm \lambda},{\rm t}\rangle}
	\dfrac{k^{-d}\det(-\textsc{x})}{t_{1}\dots t_{d}}\mathbb{P}(\mathbb{S}_{\rm n}=\textsc{x}) {\rm dt}
	\\&= \dfrac{k^{-d}\det(-\textsc{x})}{t_{1}\dots t_{d}}\sum\limits_{{\rm n}\in\mathbb{N}^{d}}
	\prod_{j\in[d]}\p(N_{t_j}^{(j)}=n_j)\mathbb{P}(\mathbb{S}_{\rm n}=\textsc{x}) {\rm dt}
	\\&=\dfrac{k^{-d}\det(-\textsc{x})}{t_{1}\dots t_{d}}\mathbb{P}(\mathbb{X}_{\rm t}=\textsc{x}){\rm dt},
\end{align*}
which proves our result.
\end{proof}

From now on, we will add $k$ as a superscript to all objects referring to the discrete valued spaLf defined 
above. For instance, the latter will be denoted by $\mathbb{X}^{(k)}=(X^{i,j,k})_{i,j\in[d]}$ or $\mathbf{X}^{(k)}$,
where ${\rm X}^{(j),k}= {}^{t}(X^{1,j,k},\dots,X^{d,j,k})$. It is pretty clear that lattice valued spaLf's satisfy analogous properties 
to those of spaLf's introduced in Section \ref{2018}. In particular, the discrete time field 
${\rm r}\mapsto(\mathbf{T}_{\rm r}^{(k)},\mathbb{X}_{\mathbf{T}_{\rm r}^{(k)}}^{(k)})$, ${\rm r}\in k^{-1}\mathbb{Z}^d_+$ 
has independent and stationary increments and can be treated in a very similar way as its continuous space counterpart 
involved in Theorem \ref{4126}. That is why we will content ourselves with stating the next theorem as well as some 
preliminary results without giving any proof.\\

Recall the definition of the Laplace exponent $\varphi_j^{(k)}$ of ${\rm X}^{(j),k}$, that is
\[\e[e^{-\langle \lambda, {\rm X}^{(j),k}_t\rangle}]=e^{t\varphi_j^{(k)}(\lambda)}\,,\;\;\;t\ge0\,,\;\;\;{\bf\lambda}=
(\lambda_1,\dots,\lambda_d)\in \mathbb{R}_+^d\,.\] 
Then as in Theorem \ref{4526}, we can prove that the hypothesis
\[(H^{(k)})\;\;\;\;\;\mbox{$D^{(k)}:=\{\lambda\in\mathbb{R}_+^d:\varphi_j^{(k)}(\lambda)>0,\,j\in[d]\}$ is non empty}\]
is equivalent to the fact that $\mathbf{T}_{\rm r}^{(k)}\in\mathbb{R}_+^d$ holds with positive probability, for all 
${\rm r}\in k^{-1}\mathbb{Z}_+^d$. As in Theorem \ref{4526}, the proof of this equivalence is based on the Esscher transform 
$\mathbf{X}^{(k),\mu}$, for $\mu\in M_d(\mathbb{R}_+)$ whose Laplace exponent is given by 
\begin{equation}\label{9420}
\varphi_j^{(k),\mu^{(j)}}(\lambda)=\varphi_j^{(k)}(\lambda+\mu^{(j)})-\varphi_j^{(k)}(\mu^{(j)}),\;\;\;\;\lambda\in\mathbb{R}_+^d.
\end{equation}
Let us define the set 
\[\mathcal{M}_\varphi^{(k)}=\{(\lambda,\mu)\in\mathbb{R}^{d}_{+}\times{M}_d(\mathbb{R}_{+}):
\lambda_{j}\ge\varphi_{j}^{(k)}(\mu^{(j)}),\,j\in[d]\}.\]
The following theorem is the analog of Theorem \ref{4126} for lattice valued spaLf's. 

\begin{theorem}\label{4226} Assume that $(H^{(k)})$ holds. Let ${\rm r}=(r_1,\dots,r_d)\in k^{-1}\mathbb{Z}_+^d$ and let 
$\mathbf{T}_{\rm r}^{(k)}$ be the first hitting time of level $-{\rm r}$ by the spaLf $\mathbf{X}^{(k)}$, then there exits a mapping 
$\Phi^{(k)} : \mathcal{M}_\varphi^{(k)} \rightarrow \mathbb{R}^{d}_{+}$ such that
\[\mathbb{E}\left[e^{-\langle\lambda,\mathbf{T}_{\rm r}^{(k)}\rangle -\llangle \mu,
\mathbb{X}^{(k)}_{\mathbf{T}_{\rm r}^{(k)}}\rrangle}\ind_{\{\mathbf{T}^{(k)}_{\rm r}\in\mathbb{R}^d_+\}}\right]=
e^{-\langle {\rm r},\Phi^{(k)}(\lambda,\mu)\rangle},\;\;\; (\lambda,\mu)\in\mathcal{M}^{(k)}_\varphi.\]
Moreover $\Phi^{(k)}$ satisfies the equations, 
\begin{equation}\label{2979}
\varphi_{j}^{(k)}(\mu^{(j)}+
\Phi^{(k)}(\lambda,\mu))=\lambda_{j}, \;\;\; j\in[d],\;\;\;(\lambda,\mu)\in\mathcal{M}^{(k)}_\varphi,
\end{equation}
and it is explicitly determined by
\begin{equation}\label{8779}
\Phi^{(k)}(\lambda,\mu)=\phi^{(k),\mu}(\lambda_{1}-\varphi^{(k)}_{1}(\mu^{(1)}),\dots,\lambda_{d}-\varphi^{(k)}_{d}(\mu^{(d)})),
\end{equation}
where $\phi^{(k),\mu}$ is the inverse of the Laplace exponent $\varphi^{(k),\mu}$ of the Esscher transform $\mathbf{X}^{(k),\mu}$ 
recalled in $(\ref{9420})$. 
\end{theorem}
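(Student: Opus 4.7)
The plan is to mirror the proof of Theorem \ref{4126} step by step, since the lattice-valued spaLf $\mathbf{X}^{(k)}$ enjoys the same independence and stationarity of increments as its continuous counterpart and the required preliminary results (analogs of Proposition \ref{8427} and Theorem \ref{4526}) are stated to hold in the lattice setting. Concretely, I would first fix $(\lambda,\mu)\in\mathcal{M}_\varphi^{(k)}$ and introduce the multi-indexed exponential martingale
\[\mathbf{M}_{\rm t}^{(k)}=\exp\bigl(-\langle\bar{\varphi}^{(k)}(\mu),{\rm t}\rangle-\llangle\mu,\mathbb{X}_{\rm t}^{(k)}\rrangle\bigr),\qquad{\rm t}\in\mathbb{R}_+^d,\]
with respect to the natural multi-indexed filtration $\mathcal{F}_{\rm t}^{(k)}=\sigma(\mathcal{F}_{t_1}^{(1),k}\cup\dots\cup\mathcal{F}_{t_d}^{(d),k})$. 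Applying the dyadic optional stopping argument of Kurtz (see \cite{ku}) to the approximating stopping times $\mathbf{T}_{n,{\rm r}}^{(k)}$ defined as in the proof of Theorem \ref{4126}, followed by two successive applications of Fatou's lemma, yields the integrability bound $\mathbb{E}[\mathbf{M}^{(k)}_{\mathbf{T}_{\rm r}^{(k)}}\ind_{\{\mathbf{T}_{\rm r}^{(k)}\in\mathbb{R}_+^d\}}]\le 1$, so the Laplace transform on the left-hand side is well-defined.

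Next I would establish the existence of $\Phi^{(k)}$. Using the lattice analog of (\ref{1087}), namely the identity in law
\[(\mathbf{T}^{(k)}_{{\rm r}+{\rm r}'},\mathbb{X}^{(k)}_{\mathbf{T}^{(k)}_{{\rm r}+{\rm r}'}})\ind_{\{\mathbf{T}^{(k)}_{{\rm r}+{\rm r}'}\in\mathbb{R}_+^d\}}\ed(\mathbf{T}^{(k)}_{\rm r}+\mathbf{T}'^{(k)}_{{\rm r}'},\mathbb{X}^{(k)}_{\mathbf{T}^{(k)}_{\rm r}}+\mathbb{X}'^{(k)}_{\mathbf{T}'^{(k)}_{{\rm r}'}})\ind_{\{\cdots\}},\]
valid for ${\rm r},{\rm r}'\in k^{-1}\mathbb{Z}_+^d$, taking the joint Laplace transform in $(\lambda,\mu)$ gives a multiplicative functional equation in ${\rm r}$. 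Assumption $(H^{(k)})$ (equivalent, as noted, to $\p(\mathbf{T}_{\rm r}^{(k)}\in\mathbb{R}_+^d)>0$ for all such ${\rm r}$) together with right-continuity ensures positivity and continuity, and standard Cauchy-functional-equation reasoning produces $\Phi^{(k)}\colon\mathcal{M}_\varphi^{(k)}\to\mathbb{R}_+^d$.

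To obtain the functional equation (\ref{2979}), I would repeat the strong Markov decomposition (\ref{6587}) in the lattice setting:
\[(\mathbf{T}_{\rm r}^{(k)},\mathbb{X}^{(k)}_{\mathbf{T}_{\rm r}^{(k)}})=({\rm r},\mathbb{X}^{(k)}_{\rm r})+(\tilde{\mathbf{T}}_{{\rm r}+\mathbf{X}_{\rm r}^{(k)}},\tilde{\mathbb{X}}^{(k)}_{\tilde{\mathbf{T}}_{{\rm r}+\mathbf{X}_{\rm r}^{(k)}}})\quad\text{on }\{\mathbf{T}_{\rm r}^{(k)}\in\mathbb{R}_+^d\},\]
where $\tilde{\mathbb{X}}^{(k)}_{\rm t}=\mathbb{X}^{(k)}_{{\rm r}+{\rm t}}-\mathbb{X}^{(k)}_{\rm r}$ is independent of $\mathcal{F}_{\rm r}^{(k)}$ and has the same law as $\mathbb{X}^{(k)}$. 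Conditioning on $\mathbb{X}_{\rm r}^{(k)}$ and using the definition of $\Phi^{(k)}$ together with independence of the columns ${\rm X}^{(j),k}$ collapses the conditional expectation into the product $\prod_{j\in[d]}\mathbb{E}[e^{-\langle\mu^{(j)}+\Phi^{(k)}(\lambda,\mu),{\rm X}^{(j),k}_{r_j}\rangle}]=\exp\bigl(\sum_jr_j\varphi_j^{(k)}(\mu^{(j)}+\Phi^{(k)}(\lambda,\mu))\bigr)$, matching $e^{\langle\lambda,{\rm r}\rangle}$ coordinatewise and giving (\ref{2979}).

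Finally, the explicit formula (\ref{8779}) follows by reading (\ref{2979}) through the Esscher lens: the Laplace exponent of $\mathbf{X}^{(k),\mu}$ is $\varphi_j^{(k),\mu^{(j)}}(\lambda)=\varphi_j^{(k)}(\lambda+\mu^{(j)})-\varphi_j^{(k)}(\mu^{(j)})$, the absolute-continuity relation inherited from (\ref{3790}) transfers $(H^{(k)})$ to $(H)$ for $\mathbf{X}^{(k),\mu}$, and the lattice analog of Theorem \ref{4526} then supplies the inverse diffeomorphism $\phi^{(k),\mu}$ of $\varphi^{(k),\mu}$ on the interior of $D_\mu^{(k)}$. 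Rewriting (\ref{2979}) as $\varphi^{(k),\mu^{(j)}}_j(\Phi^{(k)}(\lambda,\mu))=\lambda_j-\varphi^{(k)}_j(\mu^{(j)})$ yields (\ref{8779}) in the interior; continuity of both sides extends it to all of $\mathcal{M}_\varphi^{(k)}$. The main obstacle is really bookkeeping: one must verify that the Kurtz multi-indexed martingale stopping lemma and the diffeomorphism part of Theorem \ref{4526} apply verbatim to lattice-valued spaLf's, but neither argument uses the continuity of the state space, so the transfer is routine.
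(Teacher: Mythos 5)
Your proof is correct and follows precisely the route the paper has in mind: the authors explicitly omit a proof for Theorem \ref{4226}, writing just before the statement that the lattice-valued field ``can be treated in a very similar way as its continuous space counterpart involved in Theorem \ref{4126},'' and your step-by-step adaptation---the Kurtz multi-indexed optional-stopping bound giving $\mathbb{E}[\mathbf{M}^{(k)}_{\mathbf{T}^{(k)}_{\rm r}}\ind_{\{\mathbf{T}^{(k)}_{\rm r}\in\mathbb{R}^d_+\}}]\le 1$, the multiplicative functional equation from independent and stationary increments producing $\Phi^{(k)}$, the strong-Markov decomposition at time ${\rm r}$ yielding (\ref{2979}), and the Esscher reformulation plus the lattice analog of Theorem \ref{4526} giving (\ref{8779})---is exactly the argument being invoked. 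One cosmetic remark: in the lattice setting, existence of $\Phi^{(k)}$ from the Cauchy-type equation on $k^{-1}\mathbb{Z}^d_+$ does not require a continuity argument in ${\rm r}$ (the semigroup is freely generated by the unit vectors $k^{-1}{\rm e}_j$), so that appeal to right-continuity is superfluous there; otherwise the transfer is indeed routine, since none of the arguments in Theorem \ref{4126}'s proof uses continuity of the state space.
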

 
In order to end the proof of Theorem \ref{4126}, we need to prove that any $d$-dimensional L\'evy process is the weak limit of a 
sequence of lattice valued $d$-dimensional L\'evy processes.  The index $k$ is now a variable that will be taken  to infinity. 

\begin{lemma}\label{2895}
Let ${\rm Y}$ be any $d$-dimensional L\'evy process. Then there exists a sequence of $(k^{-1}\mathbb{Z})^d$-valued  
L\'evy processes ${\rm Y}^{(k)}$ which converges weakly in the $J_1$ Skohorod's topology toward ${\rm Y}$.
\end{lemma}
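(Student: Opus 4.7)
My plan is to construct ${\rm Y}^{(k)}$ as a lattice-valued compound Poisson process (with a possibly infinite intensity realized as a genuine L\'evy process on $(k^{-1}\mathbb{Z})^d$) whose characteristic exponent converges pointwise to that of ${\rm Y}$, and then invoke the standard equivalence between pointwise convergence of characteristic exponents of L\'evy processes and weak convergence in the $J_1$ Skohorod topology (see e.g.\ Corollary VII.3.6 in Jacod-Shiryaev, or Theorem 15.17 in Kallenberg).

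The construction goes by approximating each piece of the L\'evy--Khintchine triple $(b,\Sigma,\nu)$ of ${\rm Y}$ by lattice-valued pieces. For the jump part I define $\nu_k$ on $(k^{-1}\mathbb{Z})^d\setminus\{0\}$ as the push-forward of $\nu$ by the nearest-lattice-point map $\pi_k$, restricted to $\{|x|>k^{-1}\}$; for smaller $x$ I absorb the contribution into the Gaussian/drift piece below. For the drift $b$ and the Gaussian covariance $\Sigma$, since $\Sigma$ is a symmetric non-negative matrix, I write $\Sigma=\sum_{i}\alpha_i v_iv_i^{t}$ with $v_i\in\mathbb{Q}^d$ (approximating if needed) and approximate each $v_iv_i^{t}$ by the covariance per unit time of a symmetric compound Poisson on $(k^{-1}\mathbb{Z})^d$ with jumps $\pm k^{-1}\lfloor k v_i\rfloor$ at rate $\alpha_i k^2/|\lfloor kv_i\rfloor|^2$; the drift is matched by an additional deterministic lattice jump process with Poisson jumps of size $k^{-1}$ in each coordinate direction, with rates tuned so that the mean per unit time is exactly $b$ plus the drift correction coming from the truncation of $\nu$ and from the compensators.

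The verification then amounts to showing that the resulting characteristic exponent $\Psi_k$ of ${\rm Y}^{(k)}$ converges pointwise to the characteristic exponent $\Psi$ of ${\rm Y}$. This comes from three convergences: $\int(1-\cos\langle\lambda,x\rangle)\,\nu_k(dx)\to\int(1-\cos\langle\lambda,x\rangle)\,\nu(dx)$ by dominated convergence, using that $\pi_k(x)\to x$ uniformly on compacts and that $\nu$ integrates $1\wedge|x|^2$; convergence of the compound Poisson Gaussian-approximants to $\tfrac12\langle\lambda,\Sigma\lambda\rangle$ by a second-order Taylor expansion $1-\cos(k^{-1}u)=\tfrac12 k^{-2}u^2+O(k^{-4}u^4)$ multiplied by the rate of order $k^2$; and matching of the drift contribution by construction.

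The main obstacle, as I see it, is bookkeeping rather than conceptual: keeping track of the compensators when splitting jumps into a lattice-valued ``Gaussian part'' and a lattice-valued ``large jump part,'' and ensuring that these compensators combine to yield exactly the drift $b$ in the limit. Once the characteristic exponents converge pointwise, $J_1$-convergence follows from the classical result for L\'evy processes, so there is no separate tightness argument to carry out.
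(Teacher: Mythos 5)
Your route is genuinely different from the paper's. The paper first invokes Lemma 45.12 of Sato to approximate a general L\'evy process by compound Poisson processes (in the sense of uniform convergence on compacts), so that in the core of the argument one only has to discretise a drift plus a finite L\'evy measure onto $(k^{-1}\mathbb{Z})^d$; the general case is then recovered by a diagonal argument $Y^{(k)}:=Z^{(k,k)}$. You instead build the lattice approximant in one shot, matching each piece of the L\'evy--Khintchine triple separately, and in particular you approximate the Gaussian covariance $\Sigma$ directly by a lattice random walk. This direct route is legitimate and more explicit, but it is precisely in that Gaussian piece that your calibration is wrong as written.

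With jumps $\pm k^{-1}\lfloor kv_i\rfloor$ at total rate $c_k=\alpha_i k^2/|\lfloor kv_i\rfloor|^2$, the jump size is $k^{-1}\lfloor kv_i\rfloor\to v_i$, which does \emph{not} shrink, and $c_k\to\alpha_i/|v_i|^2$, which does \emph{not} blow up. You have indeed matched the covariance per unit time, but a symmetric compound Poisson with jumps of size $O(1)$ at rate $O(1)$ converges in law to itself, not to a Brownian motion. The second--order Taylor expansion you invoke, $1-\cos\langle u,w\rangle=\tfrac12\langle u,w\rangle^2+O(|w|^4)$, is only useful if $w\to0$; here $w\to v_i\neq0$, so the ``rate of order $k^2$'' you mention does not actually materialise and $c_k(1-\cos\langle u,w\rangle)$ converges to $\alpha_i(1-\cos\langle u,v_i\rangle)/|v_i|^2$, not to $\tfrac{\alpha_i}{2}\langle u,v_i\rangle^2$. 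The repair is easy: let the jump shrink while the rate grows, for instance take jumps $w_k=\pm k^{-1}\lfloor k^{1/2}v_i\rfloor\in(k^{-1}\mathbb{Z})^d$ (of size $O(k^{-1/2})$) at rate $c_k=\alpha_i k^2/|\lfloor k^{1/2}v_i\rfloor|^2\sim\alpha_i k/|v_i|^2$. Then $c_k\,w_kw_k^{\,t}\to\alpha_i v_iv_i^{\,t}/|v_i|^2$, while $c_k|w_k|^4\to0$, so $c_k\bigl(1-\cos\langle u,w_k\rangle\bigr)\to\tfrac{\alpha_i}{2}\langle u,v_i\rangle^2$ when $|v_i|=1$. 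With this correction, and with the compensator bookkeeping you already flagged (the truncation of $\nu$ near $0$ and the discretisation of its support both contribute a drift that must be cancelled against the deterministic lattice drift), your argument goes through: the discretisation of $\nu$ by the nearest--lattice--point pushforward with dominated convergence is fine, and once $\Psi_k\to\Psi$ pointwise the $J_1$ convergence follows from the classical criterion (the paper quotes Skorohod's Theorem 2.7 for this same step).
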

\begin{proof}  Let us first assume that $Y$ has bounded variation. Then the characteristic exponent $\psi$ of $Y$ 
can be written as 
\[\psi(\lambda)=-i\langle {\rm a},\lambda\rangle-\int_{\mathbb{R}^d}(1-e^{i\langle \lambda,{\rm x}\rangle})\,\pi(d{\rm x}),\;\; 
\lambda\in\mathbb{R}^{d},\]
where ${\rm a}=(a_1,\dots,a_d)\in\mathbb{R}^d$ and $\pi$ is some L\'evy measure such that 
$\int_{\mathbb{R}^d}(1\wedge|{\rm x}|)\,\pi(d{\rm x})<\infty$.

Let $\pi^{(k)}$ be the restriction of $\pi$ to the set 
$\{{\rm x}\in\mathbb{R}^d:|{\rm x}|\ge k^{-1}\}$ i.e.~$\pi^{(k)}(d{\rm x})=\ind_{\{{\rm x}\in\mathbb{R}^d:|{\rm x}|\ge k^{-1}\}}\pi(d{\rm x})$. 
For $x\in\mathbb{R}$, set $\mbox{sign}(x)=\ind_{\{x>0\}}-\ind_{\{x<0\}}$. Then we consider the following sequence of 
$(k^{-1}\mathbb{Z})^d$-valued L\'evy processes
\[{\rm Y}^{(k)}_t=k^{-1}\tilde{{\rm N}}^{(k)}_{t}+\sum_{n=0}^{N_t^{(k)}}{\rm Z}_n^{(k)},\]
where $\tilde{{\rm N}}^{(k)}=(\mbox{sign}(a_1)\tilde{N}^{1,k},\dots,\mbox{sign}(a_d)\tilde{N}^{d,k})$ and  
$\tilde{N}^{1,k},\dots,\tilde{N}^{d,k}$ are independent Poisson processes with respective intensities $k|a_j|$,  
$(N_t^{(k)})_{t\ge0}$ is a Poisson process with intensity  $\pi(k ^{-1},\infty)^d$ and for each $k\ge1$, $({\rm Z}_n^{(k)})_{n\ge0}$ 
is a sequence of i.i.d~random variables such that ${\rm Z}_n ^{(k)}\ed k^{-1}[k{\rm Z}_k]$  and ${\rm Z}_k$ has  law 
$(\pi(k^{-1},\infty)^d)^{-1}\pi^{(k)}(d{\rm x})$. (Here $[{\rm x}]=([x_1],\dots,[x_d])$ and $[x_i]$ denotes the lower integer part of 
$x_{i}\in\mathbb{R}$.) Moreover, the sequences $\{(\tilde{{\rm N}}_t^{(k)})_{t\ge0}, k\ge1\}$, $\{(N_t^{(k)})_{t\ge0}, k\ge1\}$ 
and $\{({\rm Z}_n^{(k)})_{n\ge0},k\ge1\}$ are independent. Then we can check that ${\rm Y}^{(k)}$ has characteristic exponent 
\[\psi_{k}(\lambda)=\sum_{j=1}^dk|a_{j}|\left(1-e^{i\frac{\lambda_j\mbox{\tiny sign}(a_j)}k}\right)+
\int_{(0,\infty)^d}(1-e^{i\langle \lambda,{\rm x}\rangle})\,\pi^{(k)}(d{\rm x}), \;\; \lambda\in\mathbb{R}^{d}_{+},\]
whose limit, as $k$ tends to $\infty$, is $\psi(\lambda)$, for all $\lambda\in\mathbb{R}^{d}$. It proves that the sequence of random 
variables $({\rm Y}_1^{(k)})_{k\ge1}$ converges weakly towards ${\rm Y}_1$.

Then recall that from Theorem 2.7 in \cite{sk}, which can be extended in higher dimension, see Section 5 in the same paper, 
the weak convergence of the sequence of random variables $({\rm Y}_1^{(k)})_{k\ge1}$ toward ${\rm Y}_1$ implies the weak 
convergence of the sequence of processes $\{({\rm Y}^{(k)}_t)_{t\ge0}, \;k\ge1\}$ towards $({\rm Y}_t)_{t\ge0}$ in the $J_1$ 
Skohorod's topology.  

Let us now assume that ${\rm Y}$ is any L\'evy process. From Lemma 45.12 in \cite{sa}, there is a sequence of compound
Poisson processes ${\rm Z}^{(k)}$ which converges weakly in the $J_1$ Skohorod's topology (and even in the sense of the 
uniform convergence) toward ${\rm Y}$. In application of what has just been proved, for each $k$, there is a sequence of 
$(n^{-1}\mathbb{Z})^d$-valued L\'evy processes $({\rm Z}^{(n,k)})_{n\geq 1}$ which converges weakly in the $J_1$ Skohorod's 
topology toward ${\rm Z}^{(k)}$. Then it suffices to set  ${\rm Y}^{(k)}:={\rm Z}^{(k,k)}$ in order to obtain the desired sequence.
\end{proof}

We have now gathered all necessary ingredients for the proof of Theorem \ref{3492}.\\

\noindent {\it Proof of Theorem $\ref{3492}$}. Let $(\mathbb{X}^{(k)})_{k\ge1}$ be a sequence of lattice valued spaLf's 
such that each sequence of columns $({\rm X}^{(j),k})_{k\geq 1}$, where ${\rm X}^{(j),k}= {}^{t}(X^{1,j,k},\dots,X^{d,j,k})$, 
converges weakly to ${\rm X}^{(j)}$. 
The existence of such a sequence is ensured by lemma \ref{1636}. This convergence means in particular that 
\begin{equation}\label{2823}
\lim_{k\rightarrow\infty}\varphi_j^{(k)}(\lambda)=\varphi_j(\lambda)\,,\;\;\;\;\lambda\ge0,\;\;\;j\in[d]\,.
\end{equation}
Since $(H)$ is satisfied, by continuity of the functions $\varphi_j$ and from (\ref{2823}), there is $k_0$ such that for all
$k\ge k_0$, $(H^{(k)})$ is satisfied. Then let $k\ge k_0$ and let $\widehat{M}_{d,{\rm r}}(k^{-1}\mathbb{Z})$ be the set of 
essentially nonnegative matrices $\textsc{x}$ of $M_d(k^{-1}\mathbb{Z})$ such that $\textsc{x}\cdot{\bf 1}=-{\rm r}$.  
We derive from Theorem \ref{4226} that for all $\alpha\in\mathbb{R}_+^d$ and $(\lambda,\mu)\in\mathcal{M}_\varphi^{(k)}$,
\begin{eqnarray*}\label{7322}
&&\qquad\sum_{{\rm r}\in k^{-1}\mathbb{Z}_+^d}k^{-d}e^{-\langle\alpha,{\rm r}\rangle}\int_{\mathbb{R}_+^d}\sum_{{\rm x}\in 
\widehat{M}_{d,{\rm r}}(k^{-1}\mathbb{Z})}e^{-\langle\lambda,{\rm t}\rangle -\llangle \mu,\textsc{x}\rrangle}\,
\p(\mathbf{T}^{(k)}_{\rm r}\in {\rm dt},\,\mathbb{X}^{(k)}_{\rm t}=\textsc{x})\\ 
&&=
[k(1-e^{-k^{-1}(\alpha_1+\Phi^{(k)}_{1}(\lambda,\mu))})\times k(1-e^{-k^{-1}(\alpha_2+\Phi^{(k)}_{2}(\lambda,\mu))})\dots
\times k(1-e^{-k^{-1}(\alpha_d+\Phi^{(k)}_{d}(\lambda,\mu))})]^{-1}.\nonumber
\end{eqnarray*}
Now take $(\lambda,\mu)\in\mathcal{M}_\varphi$ such that $\lambda_{j}>\varphi_{j}(\mu^{(j)})$ for all $j\in[d]$. Then 
by continuity of $\varphi_j$, $j\in[d]$, there is $k'_0$ such that for all $k\ge k'_0$, $(\lambda,\mu)\in\mathcal{M}_\varphi^{(k)}$.  
Clearly $(\varphi_j^{(k),\mu^{(j)}})_{k\geq 1}$ defined in (\ref{9420}) converges pointwise to $\varphi_j^{\mu^{(j)}}$, for all $j\in[d]$.
Hence, the sequence of inverses $(\phi^{(k),\mu})_{k\geq 1}$ also converges pointwise to $\phi^{\mu}$. Therefore, from 
(\ref{8778}), (\ref{8779}) and by continuity, $(\Phi^{(k)}(\lambda,\mu))_{k\geq 1}$ converges to $\Phi(\lambda,\mu)$. 

Now let us extend the definition of $\mathbf{T}^{(k)}_{\rm r}$ to all ${\rm r}\in\mathbb{R}_+^d$ by setting 
$\mathbf{T}^{(k)}_{\rm r}:=\mathbf{T}^{(k)}_{[\rm r]}$, where $[{\rm r}]=k^{-1}([kr_1],\dots,[kr_d])$ and where $[x]$ denotes the 
lower integer part of $x$. Then by taking $k$ to infinity in (\ref{7322}), we obtain from (\ref{6252}) that for all 
$\alpha\in\mathbb{R}^d_+$ and $(\lambda,\mu)\in\mathcal{M}_\varphi$ such that $\lambda_{j}>\varphi_{j}(\mu^{(j)})$, 
for all $j\in[d]$,
\begin{eqnarray}
&&\lim_{k\rightarrow\infty}\sum_{{\rm r}\in k^{-1}\mathbb{Z}_+^d}k^{-d}e^{-\langle\alpha,{\rm r}\rangle}
\int_{\mathbb{R}_+^d}\sum_{{\rm x}\in\widehat{M}_{d,{\rm r}}(k^{-1}\mathbb{Z})}e^{-\langle\lambda,{\rm t}\rangle 
-\llangle \mu,\textsc{x}\rrangle}\,\p(\mathbf{T}^{(k)}_{\rm r}\in {\rm dt},\,\mathbb{X}^{(k)}_{\rm t}=\textsc{x})\label{5951}\\ 
&=&\lim_{k\rightarrow\infty}\int_{\mathbb{R}_+^d\times\mathbb{R}_+^d\times\widehat{M}_d(\mathbb{R})}e^{-\langle\alpha,{\rm r}
\rangle-\langle\lambda,{\rm t}\rangle -\llangle \mu,\textsc{x}\rrangle}\,\p(\mathbf{T}^{(k)}_{\rm r}\in {\rm dt},\,
\mathbb{X}^{(k)}_{\rm t}\in {\rm d}\textsc{x}){\rm d}{\rm r}\nonumber\\
&=&[(\alpha_1+\Phi_{1}(\lambda,\mu)))(\alpha_2+\Phi_{2}(\lambda,\mu)))\dots(\alpha_d+\Phi_{d}(\lambda,\mu))]^{-1}\nonumber\\
&=&\int_{\mathbb{R}_+^d\times\mathbb{R}_+^d\times\widehat{M}_d(\mathbb{R})}e^{-\langle\alpha,{\rm r}
\rangle-\langle\lambda,{\rm t}\rangle -\llangle \mu,\textsc{x}\rrangle}\,\p(\mathbf{T}_{\rm r}\in {\rm dt},\,
\mathbb{X}_{\rm t}\in {\rm d}\textsc{x}){\rm d}{\rm r}.\nonumber
\end{eqnarray}

On the other hand, let $\widehat{M}_{d}(k^{-1}\mathbb{Z})$ be the set of essentially nonnegative matrices $\textsc{x}$ of 
$M_d(k^{-1}\mathbb{Z})$ such that $\textsc{x}\cdot{\bf 1}\le0$. Then as a direct consequence of Proposition \ref{1636}, 
we obtain that for all $\alpha\in\mathbb{R}_+^d$ and $(\lambda,\mu)\in\mathcal{M}_\varphi^{(k)}$,
\begin{eqnarray*}
&&\sum_{{\rm r}\in k^{-1}\mathbb{Z}_+^d}k^{-d}e^{-\langle\alpha,{\rm r}\rangle}\int_{\mathbb{R}_+^d}\sum_{{\rm x}\in 
\widehat{M}_{d,{\rm r}}(k^{-1}\mathbb{Z})}e^{-\langle\lambda,{\rm t}\rangle -\llangle \mu,\textsc{x}\rrangle}\,
\p(\mathbf{T}^{(k)}_{\rm r}\in {\rm dt},\,\mathbb{X}^{(k)}_{\rm t}=\textsc{x})\nonumber\\
&&\qquad\qquad=\int_{\mathbb{R}_+^d}\sum_{{\rm r}\in k^{-1}\mathbb{Z}_+^d,\,{\rm x}\in\widehat{M}_{d,{\rm r}}(k^{-1}
\mathbb{Z})}e^{-\langle\alpha,{\rm r}\rangle-\langle\lambda,{\rm t}\rangle -\llangle \mu,\textsc{x}\rrangle}
\frac{\mbox{det}(-\textsc{x})}{t_1t_2\dots t_d}\p(\mathbb{X}^{(k)}_{\rm t}=\textsc{x})\,{\rm dt}\,.\nonumber\\
&&\qquad\qquad=\int_{\mathbb{R}_+^d}\sum_{{\rm x}\in \widehat{M}_{d}(k^{-1}\mathbb{Z})}
e^{\langle\alpha,\textsc{x}\cdot{\bf 1}\rangle-\langle\lambda,{\rm t}\rangle -\llangle \mu,\textsc{x}\rrangle}
\frac{\mbox{det}(-\textsc{x})}{t_1t_2\dots t_d}\p(\mathbb{X}^{(k)}_{\rm t}=\textsc{x})\,{\rm dt}\nonumber\\
&&\qquad\qquad=\int_{\mathbb{R}_+^d}\int_{\widehat{M}_{d}(\mathbb{R})}
e^{\langle\alpha,\textsc{x}\cdot{\bf 1}\rangle-\langle\lambda,{\rm t}\rangle -\llangle \mu,\textsc{x}\rrangle}
\frac{\mbox{det}(-\textsc{x})}{t_1t_2\dots t_d}\p(\mathbb{X}^{(k)}_{\rm t}\in {\rm d}\textsc{x})\,{\rm dt}\,,\label{6214}
\end{eqnarray*}
then it follows from the above calculation and from (\ref{5951}) that for all $\alpha\in\mathbb{R}^d_+$ and 
$(\lambda,\mu)\in\mathcal{M}_\varphi$ such that 
$\lambda_{j}>\varphi_{j}(\mu^{(j)})$, $j\in[d]$,
\begin{eqnarray}
&&\int_{\mathbb{R}_+^d\times\mathbb{R}_+^d\times\widehat{M}_d(\mathbb{R})}e^{-\langle\alpha,{\rm r}
\rangle-\langle\lambda,{\rm t}\rangle -\llangle \mu,\textsc{x}\rrangle}\,\p(\mathbf{T}_{\rm r}\in {\rm dt},\,
\mathbb{X}_{\rm t}\in {\rm d}\textsc{x}){\rm d}{\rm r}\nonumber\\
&&\qquad\quad=\lim_{k\rightarrow\infty}\int_{\mathbb{R}_+^d}\int_{\widehat{M}_{d}(\mathbb{R})}
e^{\langle\alpha,\textsc{x}\cdot{\bf 1}\rangle-\langle\lambda,{\rm t}\rangle -\llangle \mu,\textsc{x}\rrangle}
\frac{\mbox{det}(-\textsc{x})}{t_1t_2\dots t_d}\p(\mathbb{X}^{(k)}_{\rm t}\in {\rm d}\textsc{x})\,{\rm dt}\,.\label{6014}
\end{eqnarray}
Now, we derive from the weak convergence of $\mathbb{X}^{(k)}_{\rm t}$ toward $\mathbb{X}_{\rm t}$ for each ${\rm t}$ that  
\[\lim_{k\rightarrow\infty}\int_{\widehat{M}_{d}(\mathbb{R})}e^{\langle\alpha,\textsc{x}\cdot{\bf 1}\rangle-\llangle 
\mu,\textsc{x}\rrangle}\mbox{det}(-\textsc{x})\p(\mathbb{X}^{(k)}_{\rm t}\in {\rm d}\textsc{x})=
\int_{\widehat{M}_{d}(\mathbb{R})}e^{\langle\alpha,\textsc{x}\cdot{\bf 1}\rangle-\llangle 
\mu,\textsc{x}\rrangle}\mbox{det}(-\textsc{x})\p(\mathbb{X}_{\rm t}\in {\rm d}\textsc{x}),\]
so that for all $\varepsilon>0$, 
\begin{eqnarray*}
&&\lim_{k\rightarrow\infty}\int_{\{{\rm t}\ge\varepsilon\cdot{\bf 1}\}}\int_{\widehat{M}_{d}(\mathbb{R})}
e^{\langle\alpha,\textsc{x}\cdot{\bf 1}\rangle-\langle\lambda,{\rm t}\rangle -\llangle \mu,\textsc{x}\rrangle}
\frac{\mbox{det}(-\textsc{x})}{t_1t_2\dots t_d}\p(\mathbb{X}^{(k)}_{\rm t}\in {\rm d}\textsc{x})\,{\rm dt}\\
&&\qquad\qquad=
\int_{\{{\rm t}\ge\varepsilon\cdot{\bf 1}\}}\int_{\widehat{M}_{d}(\mathbb{R})}
e^{\langle\alpha,\textsc{x}\cdot{\bf 1}\rangle-\langle\lambda,{\rm t}\rangle -\llangle \mu,\textsc{x}\rrangle}
\frac{\mbox{det}(-\textsc{x})}{t_1t_2\dots t_d}\p(\mathbb{X}_{\rm t}\in {\rm d}\textsc{x})\,{\rm dt}.
\end{eqnarray*}
Then from Proposition \ref{1636},
\begin{eqnarray*}
&&\int_{\{{\rm t}\ge\varepsilon\cdot{\bf 1}\}^c}\int_{\widehat{M}_{d}(\mathbb{R})}
e^{\langle\alpha,\textsc{x}\cdot{\bf 1}\rangle-\langle\lambda,{\rm t}\rangle -\llangle \mu,\textsc{x}\rrangle}
\frac{\mbox{det}(-\textsc{x})}{t_1t_2\dots t_d}\p(\mathbb{X}^{(k)}_{\rm t}\in {\rm d}\textsc{x})\,{\rm dt}\\
&&\qquad\qquad=\int_{\{{\rm t}\ge\varepsilon\cdot{\bf 1}\}^c\times\mathbb{R}_+^d\times\widehat{M}_{d}(\mathbb{R})}
e^{-\langle\alpha,{\rm r}\rangle -\langle\lambda,{\rm t}\rangle -\llangle \mu,\textsc{x}\rrangle}\,\p(\mathbf{T}^{(k)}_{\rm r}\in 
{\rm dt},\,\mathbb{X}^{(k)}_{\rm t}=\textsc{x})\,{\rm dr}\\
&&\qquad\qquad=\int_{\mathbb{R}_+^d}e^{-\langle\alpha,{\rm r}\rangle}\e\left[e^{-\langle\lambda,
\mathbf{T}^{(k)}_{\rm r}\rangle-\llangle\mu,\mathbb{X}^{(k)}_{\mathbf{T}^{(k)}_{\rm r}}\rrangle}
\mathds{1}_{\{\mathbf{T}^{(k)}_{\rm r}\ge\varepsilon\cdot{\bf 1}\}^c}\right]\,{\rm dr},
\end{eqnarray*}
which entails from a trivial extension of (\ref{6014}) that,
\begin{eqnarray}
&&\lim_{k\rightarrow\infty}\int_{\{{\rm t}\ge\varepsilon\cdot{\bf 1}\}^c}\int_{\widehat{M}_{d}(\mathbb{R})}
e^{\langle\alpha,\textsc{x}\cdot{\bf 1}\rangle-\langle\lambda,{\rm t}\rangle -\llangle \mu,\textsc{x}\rrangle}
\frac{\mbox{det}(-\textsc{x})}{t_1t_2\dots t_d}\p(\mathbb{X}^{(k)}_{\rm t}\in {\rm d}\textsc{x})\,{\rm dt}\nonumber\\
&&\qquad\qquad=\int_{\mathbb{R}_+^d}e^{-\langle\alpha,{\rm r}\rangle}\e[e^{-\langle\lambda,\mathbf{T}_{\rm r}\rangle 
-\llangle\mu,\mathbb{X}_{\mathbf{T}_{\rm r}}\rrangle}\mathds{1}_{\{\mathbf{T}_{\rm r}\ge\varepsilon\cdot{\bf 1}\}^c}]\,{\rm dr}.
\label{2699}
\end{eqnarray}
But from part 2.~of Theorem \ref{4526}, for all $i,j\in[d]$, $\lim\limits_{s\rightarrow\infty}\phi_j(s{\rm e}_i)=\infty$, which implies that 
for all ${\rm r}>0$ and all $i\in[d]$,  $\p(T^{(i)}_{\rm r}>0)>0$. In particular,
\[\lim\limits_{\varepsilon\rightarrow0}\p(\{\mathbf{T}_{\rm r}\ge\varepsilon\cdot{\bf 1}\}^c)\le
\lim\limits_{\varepsilon\rightarrow0}\sum\limits_{i=1}^d\p(T^{(i)}_{\rm r}<\varepsilon)=0, \] therefore  by dominated convergence,
expression (\ref{2699}) can be made arbitrarily small as $\varepsilon$ tends to 0. 

Then we have proved that the identity (\ref{7370}) is valid for all $\alpha\in\mathbb{R}^d_+$ and 
$(\lambda,\mu)\in\mathcal{M}_\varphi$ such that $\lambda_{j}>\varphi_{j}(\mu^{(j)})$, $j\in[d]$. Now let any 
$(\lambda,\mu)\in\mathcal{M}_\varphi$ and assume that 
$\lambda_{i}=\varphi_{i}(\mu^{(i)})$ for some $i\in[d]$. Then identity (\ref{7370}) is valid if we replace $\lambda_i$ by 
$\lambda_{i}'=\lambda_{i}+\varepsilon_i$, for $\varepsilon_i>0$ and we obtain it for $(\lambda,\mu)$ by letting $\varepsilon_i$
going to 0 and applying monotone convergence. $\hfill\Box$\\

\noindent{\it Proof of Corollary $\ref{8466}$.}  Assume first that $d>1$. Then taking $\mu=0$ in Theorem \ref{3492} 
gives
\begin{eqnarray}
\int_{\mathbb{R}_+^d\times\mathbb{R}_+^d}
e^{-\langle\alpha,{\rm r}
\rangle-\langle\lambda,{\rm t}\rangle}\,\p(\mathbf{T}_{\rm r}\in {\rm dt}){\rm d}{\rm r}
&=&\int_{\mathbb{R}_+^d\times\widehat{M}_d(\mathbb{R})}e^{\langle\alpha,\textsc{x}\cdot{\bf 1}
\rangle-\langle\lambda,{\rm t}\rangle}
\frac{\mbox{det}(-\textsc{x})}{t_1t_2\dots t_d}\p(\mathbb{X}_{\rm t}\in {\rm d}\textsc{x})\,{\rm dt}\nonumber\\
&=&\int_{\mathbb{R}_+^d}e^{-\langle\lambda,{\rm t}\rangle}
\mathbb{E}\left[e^{\langle\alpha,\mathbb{X}_{\rm t}\cdot{\bf 1}\rangle}
\frac{\mbox{det}(-\mathbb{X}_{\rm t})}{t_1t_2\dots t_d}\ind_{\left\{\mathbb{X}_{\rm t}\in\widehat{M}_d(\mathbb{R})\right\}}\right]
{\rm dt}\,.
\label{7302}
\end{eqnarray}
Note that from our assumptions the density $p_{\rm t}:M_d(\mathbb{R})\rightarrow \mathbb{R}$ of $\widehat{\mathbb{X}}_{\rm t}$ 
is continuous on the set of matrices whose columns belong to $F_1\times F_2\times\dots\times F_d$.
Let $\overline{M}_d(\mathbb{R})$ be the set of essentially nonnegative matrices whose elements of the diagonal are non-positive. 
Then
\[\mathbb{E}\left[e^{\langle\alpha,\mathbb{X}_{\rm t}\cdot{\bf 1}\rangle}
\frac{\det(-\mathbb{X}_{\rm t})}{t_1t_2\dots t_d}\ind_{\left\{\mathbb{X}_{\rm t}\in\widehat{M}_d(\mathbb{R})\right\}}\right]
	 =\int\limits_{\overline{M}_d(\mathbb{R})}
e^{\sum\limits_{i=1}^{d} \alpha_{i}x_{i,i}}
\frac{\det(-(\overline{\textsc{x}}+D(\textsc{x}))}{t_{1}\dots t_{d}} p_{\rm t}(\textsc{x}) {\rm d}\textsc{x},\]
where $D(\textsc{x})=(d_{i,j})_{i,j\in[d]}$ is defined by $d_{i,i}=x_{i,i}$ and $d_{i,j}=0$ for $i\neq j$, and 
$\overline{x}=(\overline{x}_{i,j})_{i,j\in[d]}$ such that $\overline{x}_{i,i}=-\sum\limits_{j\neq i} x_{i,j}$ and 
$\overline{x}_{i,j}=x_{i,j}$ for $i\neq j$. Let $I_{d}$ be the identity matrix. Then
\begin{equation}\label{7722}
\int\limits_{\overline{M}_d(\mathbb{R})}
e^{\sum\limits_{i=1}^{d} \alpha_{i} x_{i,i}}
\frac{\det(-(\overline{\textsc{x}}+D(\textsc{x}))}{t_{1}\dots t_{d}} p_{\rm t}(\textsc{x}) {\rm d}\textsc{x}
	=\int\limits_{\mathbb{R}^{d}_{+}}
	\int\limits_{\mathbb{R}^{d(d-1)}_{+}}
e^{-\langle \alpha,{\rm r}\rangle}
\frac{\det(-(\overline{\textsc{x}}+{\rm r}I_{d}))}{t_{1}\dots t_{d}} p_{\rm t}(\textsc{x}^{\rm r}) \prod_{k\neq j}{\rm d}x_{k,j}{\rm dr},
\end{equation}
where $\textsc{x}^{\rm r}$ is the matrix $\textsc{x}$ in which the variable $x_{i,i}$ has been replaced by $r_{i}$, for all $i\in[d]$.
Then we derive from (\ref{7302}) and (\ref{7722}) that for fixed ${\rm r}\in\mathbb{R}^{d}_{+}$, 
\begin{equation}\label{exprballot}
\mathbb{P}(\mathbf{T}_{\rm r}\in {\rm dt})
	= \int\limits_{\mathbb{R}^{d(d-1)}_{+}}
\frac{\det(-(\overline{\textsc{x}}+{\rm r}I_{d}))}{t_{1}\dots t_{d}} p_{\rm t}(\textsc{x}^{\rm r}) \prod_{k\neq j}{\rm d}x_{k,j}{\rm dt}\,.\
\end{equation}
Let $i\in[d]$ and ${\rm r}=r{\rm e}_{i}$, then
\[\mathbb{P}(\mathbf{T}_{\rm r}\in {\rm dt})
	= \int\limits_{\mathbb{R}^{d(d-1)}_{+}}
\frac{r\det(-\overline{\textsc{x}}^{i,i})}{t_{1}\dots t_{d}} p_{\rm t}(\textsc{x}^{\rm r}) \prod_{k\neq j}{\rm d}x_{k,j}{\rm dt},\]
where $\overline{\textsc{x}}^{i,i}$ is the matrix obtained from $\overline{\textsc{x}}$ by deleting the row and the column $i$. 
From Exercise $1.$~in Chapter $I$ of \cite{be}, the L\'evy measure of the subordinator
$(\mathbf{T}_{r{\rm e}_{i}})_{r\geq 0}$ is the vague limit of $\mathbb{P}(\mathbf{T}_{\rm r}\in {\rm dt})/r$
as $r$ tends to 0, on sets of the form $\{|{\rm t}|>a\}$, $a>0$. Hence the expression of the statement follows from continuity 
property of $p_{\rm t}$.

The expression for $d=1$ is obtained in the same way by using the simpler form (\ref{2202}) of 
$\mathbb{P}(\mathbf{T}_{\rm r}\in {\rm dt})$ in this case. 
$\hfill\Box$

\vspace*{.5in}


\begin{thebibliography}{100}
\bibitem{ar} \sc K.J.~Arrow: \rm A "dynamic'' proof of the Frobenius-Perron theorem for Metzler matrices. {\it Probability, 
statistics, and mathematics}, 17--26, Academic Press, Boston, MA, 1989. 
\bibitem{bp} \sc  M.~Barczy and G.~Pap: \rm Asymptotic behavior of critical, irreducible multi-type continuous state and 
continuous time branching processes with immigration. {\it Stoch. Dyn.}16, no. 4, (2016). 
\bibitem{be} \sc J.~Bertoin: \it L\'evy processes. \rm Cambridge University Press, Cambridge, 1996. 
\bibitem{cpu} \textsc{M.E. Caballero, J.L. P\'erez Garmendia, and G.Uribe Bravo}:  Affine processes on 
$\mathbb{R}_+^m\times\mathbb{R}^n$ and multiparameter time changes. {\it Ann. Inst. Henri Poincar\'e Probab. Stat.} 53 
no. 3, 1280--1304, (2017). 
\bibitem{ch} \textsc{L.~Chaumont}: \textsl{Breadth first search coding of multitype forests with application to Lamperti 
representation}.  In memoriam Marc Yor--S\'eminaire de Probabilit\'es XLVII, 561--584, Lecture Notes in Math., 2137, Springer, 
Cham, 2015. 
\bibitem{cam2} \textsc{L.~Chaumont and M.~Marolleau}: Extinction times of multitype, continuous-state branching processes. 
{\it In preparation}. 
\bibitem{cl} \textsc{L.~Chaumont and R.~Liu}: \textsl{Coding multitype forests: application to the law of the total population
of branching forests}. To appear in Transactions of the American Mathematical Society, (2015). 
\bibitem{gt} \sc N.~Gabrielli and J.~ Teichmann: \rm Pathwise construction of affine processes. \it Preprint \rm 
arXiv:1412.7837, (2014).
\bibitem{kx} \sc  D.~Khoshnevisan and Y.~Xiao: \rm Level sets of additive L\'evy processes. {\it Ann. Probab.} 30, no. 1, 
62--100, (2002). 
\bibitem{ku} \sc  T.G.~Kurtz: \rm The optional sampling theorem for martingales indexed by directed sets. {\it Ann. Probab.} 
8, no. 4, 675--681, (1980). 
\bibitem{sa} \sc K.I.~Sato: \rm L\'evy processes and infinitely divisible distributions. 
Cambridge Studies in Advanced Mathematics, 68. {\it Cambridge University Press, Cambridge}, 2013.
\bibitem{sk} \sc  A.V.~Skorohod: \rm Limit theorems for stochastic processes with independent increments. 
{\it Teor. Veroyatnost. i Primenen.} 2, 1957,145--177.
\end{thebibliography}
\end{document}